\documentclass[10pt]{article}



\usepackage{setspace}
\usepackage{placeins}
\usepackage{microtype}
\usepackage{multicol}

\usepackage{url}
\usepackage{hyperref}

\usepackage[numbers]{natbib}
\let\citep\cite
\let\citet\cite

\usepackage{enumitem}
\usepackage{times}


\usepackage{amsmath}
\usepackage{amsthm}
\usepackage{amssymb}
\usepackage{amsopn}

\usepackage{bbm}
\usepackage{bm}
\usepackage{nicefrac}

\usepackage{mathtools}



\usepackage{tikz}



\usepackage{graphicx}
\usepackage[export]{adjustbox}

\usepackage{float}
\usepackage{wrapfig}
\usepackage{subcaption}

\usepackage{comment}
\usepackage{xcolor}
 

\usepackage{algorithm}
\usepackage{algorithmic}
\usepackage{listings}


\newtheorem{theorem}{Theorem}
\newtheorem{corollary}{Corollary}
\newtheorem{proposition}{Proposition}
\newtheorem{lemma}{Lemma}

\newtheorem{remark}{Remark}









\DeclareMathOperator*{\argmin}{\arg\!\min}

\DeclareMathOperator{\Nn}{\mathcal{N}}

\DeclareMathOperator{\Ii}{I}

\DeclareMathOperator{\EE}{\mathbb{E}}
\DeclareMathOperator{\RR}{\mathbb{R}}

\DeclareMathOperator{\Prob}{\mathbb{P}}

\DeclareMathOperator{\bb}{\mathbf{b}}
\DeclareMathOperator{\cc}{\mathbf{c}}
\DeclareMathOperator{\ee}{\mathbf{e}}
\DeclareMathOperator{\hh}{\mathbf{h}}
\DeclareMathOperator{\qq}{\mathbf{q}}
\DeclareMathOperator{\sss}{\mathbf{s}}
\DeclareMathOperator{\uu}{\mathbf{u}}
\DeclareMathOperator{\vv}{\mathbf{v}}
\DeclareMathOperator{\ww}{\mathbf{w}}
\DeclareMathOperator{\xx}{\mathbf{x}}
\DeclareMathOperator{\yy}{\mathbf{y}}

\DeclareMathOperator{\bzr}{\mathbf{0}}
\DeclareMathOperator{\bones}{\mathbf{1}}
\DeclareMathOperator{\beps}{\bm{\epsilon}}
\DeclareMathOperator{\bmu}{\bm{\mu}}


\usetikzlibrary{shapes, arrows, positioning}

\tikzstyle{decision} = [diamond, draw, fill = green!20, 
    text width = 4.5em, text badly centered, node distance = 3cm, inner sep = 0pt]
\tikzstyle{block} = [rectangle, draw, fill = blue!20, 
    text width = 10em, text centered, rounded corners, minimum height = 4em]
\tikzstyle{line} = [draw, -latex']
\tikzstyle{cloud} = [draw, ellipse, fill = red!20, node distance = 3cm,
    minimum height = 2em]
\tikzstyle{circ} = [circle, draw, fill = red!20, 
    text width = 10em, text centered, minimum height = 2em]
    


\begin{document}

\title{Time Series Source Separation using Dynamic Mode Decomposition\footnote{This work extends the work in \citet{prasadanDMDconf}. Submitted to the Editors on 2019 July. To appear in SIADS. This work was supported by ONR grant N00014-15-1-2141, DARPA Young Faculty Award D14AP00086, ARO MURI W911NF-11-1-039.}}
\author{Arvind Prasadan and Raj Rao Nadakuditi\footnote{University of Michigan, Ann Arbor, MI 
  (prasadan{@}umich.edu, rajnrao{@}umich.edu)}}

\maketitle

\begin{abstract}
The Dynamic Mode Decomposition (DMD) extracted dynamic modes are the non-orthogonal eigenvectors of the matrix that best approximates  the one-step temporal evolution of the multivariate samples. In the context of dynamical system analysis, the extracted dynamic modes  are a generalization of global stability modes. We apply DMD to a data matrix whose rows are linearly independent, additive mixtures of latent time series. We show that when the latent  time series are uncorrelated at a lag of one time-step then, in the large sample limit, the recovered dynamic modes will approximate, up to a column-wise normalization, the columns of the mixing matrix. Thus, DMD is a time series blind source separation algorithm in disguise, {but is different from closely related second order algorithms such as the Second-Order Blind Identification (SOBI) method and the Algorithm for Multiple Unknown Signals Extraction (AMUSE). All can unmix mixed stationary, ergodic Gaussian time series in a way that kurtosis-based Independent Components Analysis (ICA)} fundamentally cannot. We use our insights on single lag DMD to develop a higher-lag extension, analyze the finite sample performance with and without randomly missing data, and identify settings where the higher lag variant can outperform the conventional single lag variant. We validate our results with numerical simulations, and highlight how DMD can be used in change point detection. 
\end{abstract}


\section{Introduction}

The Dynamic Mode Decomposition (DMD) algorithm was invented by P. Schmid as a method for extracting dynamic information from temporal measurements of a multivariate fluid flow vector \citep{schmid2010dynamic}. The dynamic modes  extracted are the generically non-orthogonal eigenvectors of a non-normal matrix that best linearizes the one-step evolution of the measured vector (to be quantified in what follows). 

Schmid showed that the dynamic modes recovered by DMD correspond to the globally stable modes in the flow \citep{schmid2010dynamic}. The non-orthogonality of the recovered dynamic modes  reveals spatial structure in the temporal evolution of the measured fluid flows in a way that other second order spatial correlation  based methods, such as the Proper Orthogonal Decomposition (POD), do not \citep{kerschen2005method}. This spurred follow-on work on other applications and extensions of DMD to understanding dynamical systems from measurements. 

\subsection{Previous work on DMD and the analysis of dynamical systems}

Early analyses of the DMD algorithm drew connections between the DMD modes and the eigenfunctions of the Koopman operator from dynamical system theory. Rowley et al. and Mezi{\'c} et al. showed that under certain conditions, the DMD modes approximate the eigenfunctions of the Koopman operator for a given system \citep{rowley2009spectral, mezic2013analysis}. Related work in \citet{bagheri2013koopman} studied the Koopman operator directly, analyzed its spectrum, and compared it against the spectrum of the matrix decomposed in DMD. The work in \citet{rowley2009spectral} also explained how the linear DMD modes can elucidate the  structure in the temporal evolution in  nonlinear fluid flows. The work in \citet{vcrnjaric2017koopman} provided a further analysis of the Koopman operator and more connections to DMD. More recently, Lusch et al. have shown how  deep learning can be combined with DMD to extract modes for a non-linearly evolving dynamical system \citep{lusch2018deep}. 

There have been several extensions of DMD. The authors in \citet{chen2012variants} developed a method to improve the robustness of DMD to noise. Jovanovic et al. proposed a sparsity-inducing formulation of DMD that allowed fewer dynamic modes  to better capture the dynamical system \citep{jovanovic2014sparsity}. Tu et al. developed a DMD variant that takes into account systematic measurement errors and measurement noise \citep{tu2014dynamic}; this framework was extended in \citet{hemati2017biasing}. A Bayesian, probabilistic variant of DMD was developed in \citet{ijcai2017-392}, where a Gibbs sampler for the modes and a sparsity-inducing prior were proposed. Another recent extension of DMD includes an online (or streaming) version of DMD  \citep{zhang2017online}.

Additionally, there have been applications of DMD to other domains besides computational fluid mechanics. The work in \citet{bai2017dynamic} applied DMD to compressed sensing settings. A related work applied DMD to model the background in a streaming video \citep{kutz2017dynamic}. The authors in \citet{mann2016dynamic} applied DMD to finance, by using the predicted modes and temporal variations to forecast future market trends. The authors in \citet{berger2015estimation} brought DMD to the field of robotics, and used DMD to estimate perturbations in the motion of a robot. DMD has also been applied to power systems analysis, where it has been used to analyze transients in large power grids \citep{barocio2015dynamic}. There are many more applications and extensions, and we point the interested reader to the recent book by Kutz et al. \citep{kutz2016book}.

\subsection{Our main finding: DMD unmixes lag-1 (or higher lag) uncorrelated time series} 

We will introduce the general problem and model in Section \ref{sec:model}, but before proceeding, we will consider a simple, illustrative example. Suppose that we are given multivariate observations $\xx_t \in \RR^{p}$ modeled as  
\begin{equation}\label{eq:mixing model}
\xx_t = H \sss_t = Q D \sss_t,
\end{equation}
where $t$ is an integer, $H = Q D \in \RR^{p \times p}$ is a non-singular mixing matrix, and $\sss_t \in \RR^{p}$ is the latent vector of {random} signals (or sources).  The matrix $Q \in \RR^{p \times p}$ has unit-norm columns and is related to $H$ by
\begin{equation}
Q = \begin{bmatrix} \qq_1 & \ldots & \qq_p \end{bmatrix} = \begin{bmatrix} \dfrac{\hh_1}{\left\|\hh_1\right\|_2} & \ldots & \dfrac{\hh_p}{\left\|\hh_p\right\|_2} \end{bmatrix}.
\end{equation}
Setting entries of the diagonal matrix $D = \textrm{diag}(d_1, \ldots, d_p)$ as $d_i = \left\|\hh_i\right\|_2$ ensures that $H = Q D$ as in (\ref{eq:mixing model}). Note that by the phrase `mixing matrix', we mean that $H \sss_t$ produces a linear combination of the coordinates of $\sss_t$, i.e., a mixing of the coordinates. 

In what follows, we will adopt the following notational convention: we shall use boldface to denote vectors such as $\sss_t$. Matrices, such as $H$, will be denoted by non-boldface upper-case letters; and scalars, such as $s_{t1}$, will be denoted by lower-case symbols.  

We assume, without loss of generality, that
\begin{equation}
 \EE \left[\sss_t\right] = \bzr_p \textrm{ and } \EE \left[\sss_t \sss_t^T\right] = \Ii_p.  
\end{equation}
The lag-$\tau$ covariance matrix of $\sss_t$ is defined as
\begin{equation}\label{eq:Ltau1}
\EE [L_{\tau}] = \EE \left[\sss_t \sss_{t + \tau}^T\right] = \EE \left[ \sss_{t + \tau} \sss_t^T\right], 
\end{equation}
where $\tau$ is a  non-negative integer. 

If we are able to form a reliable estimate $\widehat{H}$ of the mixing matrix $H$ from the $n$ multivariate observations $\xx_1, \ldots, \xx_n$ then, via Eq. (\ref{eq:mixing model}), we can unmix the latent signals $\sss_t$ by computing $\widehat{H}^{-1} \xx_t$. Inferring $Q$ and computing $\widehat{Q}^{-1} \xx_t$  will also similarly unmix the signals. Inferring the mixing matrix and unmixing the signals (or sources)  is referred to as \textit{blind source separation} \citep{choi2005blind}.

Our key finding is that when the lag-1 covariance matrix $\EE[L_{1}]$ in (\ref{eq:Ltau1}) is diagonal, corresponding to the setting where the latent signals are lag-1 uncorrelated, weakly stationary time series, and there are sufficiently many samples of $\xx_t$, then the DMD algorithm in (\ref{eq:dmd opt}) produces a non-normal matrix whose non-orthogonal eigenvectors are reliably good (to be quantified in what follows) estimates of $Q$ in (\ref{eq:mixing model}).  In other words, DMD unmixes lag-1 uncorrelated signals and weakly stationary time series. 

Our findings reveal that a straightforward  extension of DMD, described in Section \ref{sec:lagdmd} and (\ref{eq:dmdtauopt}), allows $\tau$-DMD to unmix lag $\tau$ uncorrelated signals and time series. This brings up the possibility of using a higher lag $\tau$ to unmix signals that might exhibit a more favorable correlation at larger lag $\tau$ than at a lag of one. Indeed, in Figure \ref{fig:arma} we provide one such example where $2$-DMD provides a better estimate of $Q$ than does $1$-DMD.

Our main contribution, which builds on our previous work in \citet{prasadanDMDconf}, is the analysis of the unmixing performance of DMD and  $\tau$-DMD (introduced in Section \ref{sec:lagdmd}), when unmixing deterministic signals and random, weakly stationary time series in the finite sample regime and in the setting where there is randomly missing data in the observations $\xx_t$. 

\subsection{New insight:  DMD can unmix ergodic time series that kurtosis-based ICA cannot}  

Independent Component Analysis (ICA) is  a classical algorithm for blind source separation \citep{lee1998independent,mitsui2017blind} that is often used for the cocktail party problem of unmixing mixed audio signals. Our analysis reveals that DMD can be succesfully applied to this problem as well because independent audio sources are well modeled as one-lag (or higher lag) uncorrelated (see Figure \ref{fig:audio}). 

{It is known that kurtosis- or cumulant-based ICA (hereafter refered to as ICA) fails when more then one of the independent, latent signals is normally distributed \cite[Ch.~7]{hyvarinen2001independent}. A consequence of this is that ICA will fail to unmix mixed independent, ergodic time series with Gaussian marginal distributions: each latent signal will have a kurtosis of zero.} Our analysis, {culminating in Theorem \ref{thm:armacorrtau},} reveals that DMD will succeed in this setting, even as ICA fails; see Figure \ref{fig:ar1_intro} for an illustration where ICA fails to unmix two mixed, independent Gaussian AR(1) processes while DMD succeeds. Note that these are two independent realizations of AR(1) processes, and that there is no averaging over several realizations. Thus, DMD can and should be used by practitioners to re-analyze multivariate time series data for which the use of ICA has not revealed any insights.  

\begin{figure*}[!htb]
\begin{multicols}{4}
\begin{minipage}[b]{\linewidth}
\centering
\centerline{\includegraphics[width = \textwidth, trim = {0 0 0 0.5cm}, clip]{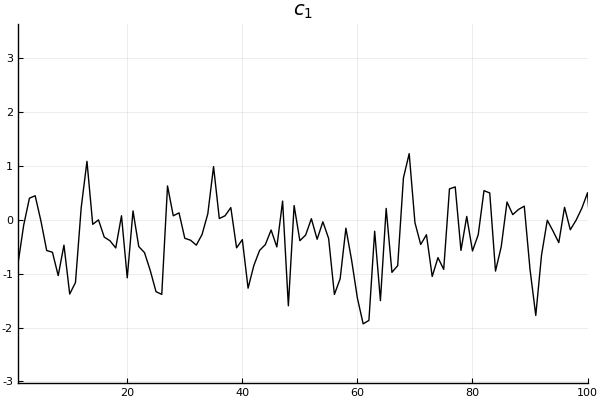}}
\subcaption{$AR(1)$, $0.7$}
\end{minipage}
\begin{minipage}[b]{\linewidth}
\centering
\centerline{\includegraphics[width = \textwidth, trim = {0 0 0 0.5cm}, clip]{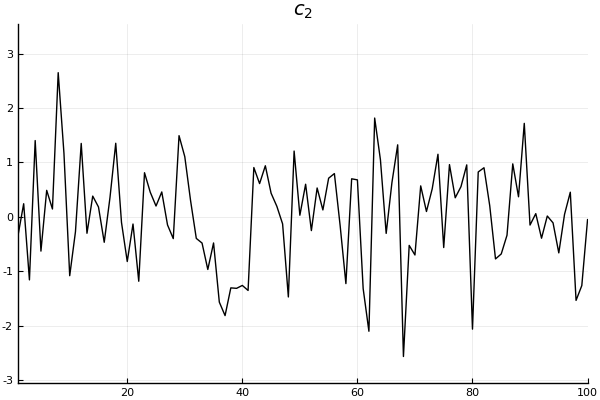}}
\subcaption{$AR(1)$, $0.2$}
\end{minipage}
\begin{minipage}[b]{\linewidth}
\centering
\centerline{\includegraphics[width = \textwidth, trim = {0 0 0 0.5cm}, clip]{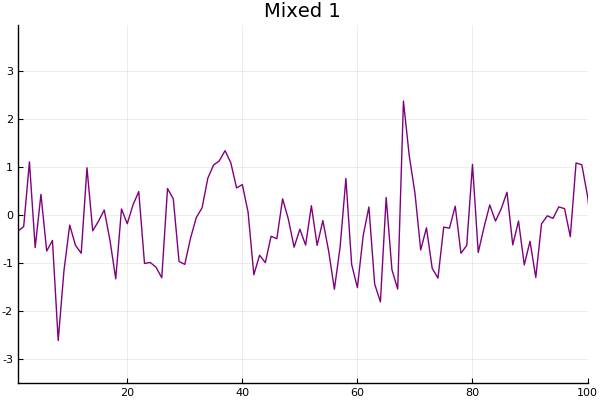}}
\subcaption{Mixed 1}
\end{minipage}
\begin{minipage}[b]{\linewidth}
\centering
\centerline{\includegraphics[width = \textwidth, trim = {0 0 0 0.5cm}, clip]{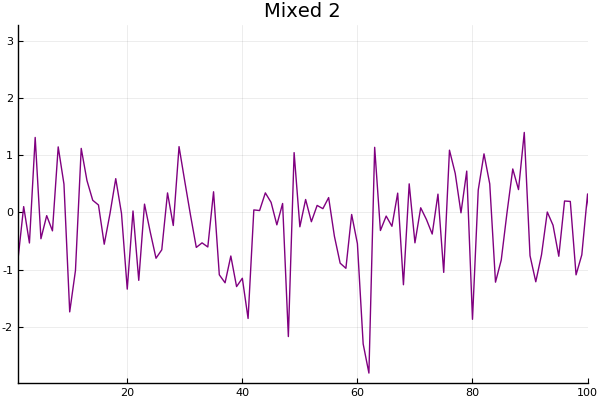}}
\subcaption{Mixed 2}
\end{minipage}
\begin{minipage}[b]{\linewidth}
\centering
\centerline{\includegraphics[width = \textwidth, trim = {0 0 0 0.5cm}, clip]{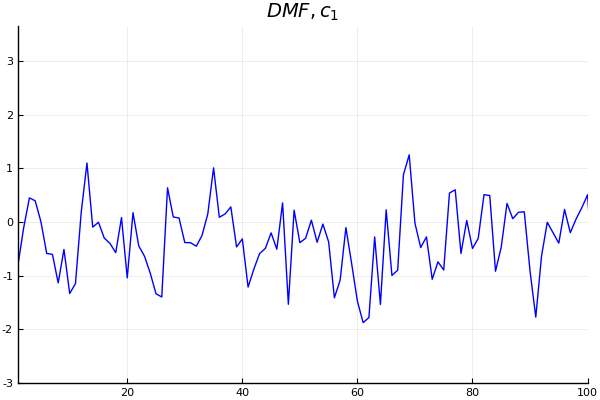}}
\subcaption{DMD 1}
\end{minipage}
\begin{minipage}[b]{\linewidth}
\centering
\centerline{\includegraphics[width = \textwidth, trim = {0 0 0 0.5cm}, clip]{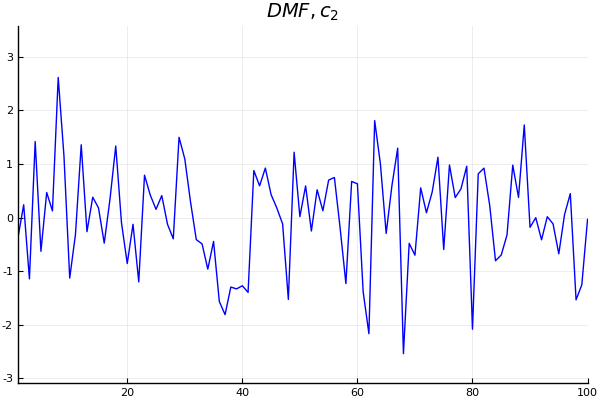}}
\subcaption{DMD 2}
\end{minipage}
\begin{minipage}[b]{\linewidth}
\centering
\centerline{\includegraphics[width = \textwidth, trim = {0 0 0 0.5cm}, clip]{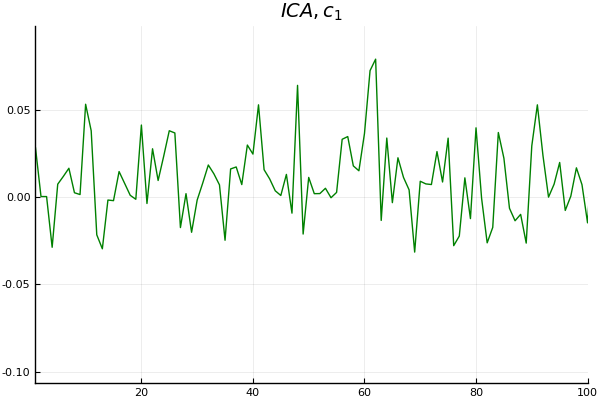}}
\subcaption{ICA 1}
\end{minipage}
\begin{minipage}[b]{\linewidth}
\centering
\centerline{\includegraphics[width = \textwidth, trim = {0 0 0 0.5cm}, clip]{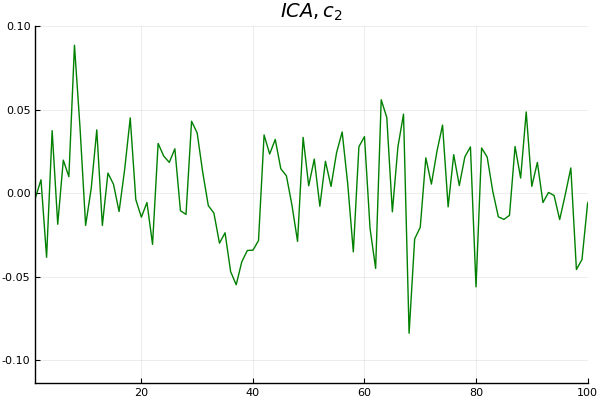}}
\subcaption{ICA 2}
\end{minipage}
\end{multicols}
\vspace{-0.7cm}
\caption{We generate two AR(1) signals of length $n = 1000$, with coefficients $0.2$ and $0.7$ respectively. We mix them orthogonally, and compare the performance of ICA and DMD at unmixing them. We observe that the squared error, defined in (\ref{eq:S_err}), of ICA is $0.41$, whereas that from DMD is $0.0055$. Indeed, ICA fails because the marginal distribution of each AR(1) process is Gaussian. In these plots, for ease of visualization we plot the first $100$ samples. }
\label{fig:ar1_intro}
\vspace{-0.4cm}
\end{figure*}

\subsection{New insight: DMD can unmix mixed Fourier series that PCA cannot}

{Principal Component Analysis (PCA) is a standard, linear dimensionality reduction method \cite{johnstone2009consistency} that can be expressed in terms of the singular value decomposition (SVD) of a data matrix.} The eigenwalker model, described in \citet{troje2002little}, is a linear model for human motion. The model is a linear combination of vectors, via
\begin{equation} \label{eq:eigwalk}
    \xx_t = \sum_{i = 1}^k \qq_i \cos\left(\omega_i t + \phi_i\right).
\end{equation}
The vectors $\qq_i$ are the modes of the motion, and each has a sinusoidal temporal variation. We generate our model as follows: 
$$\xx_t = \qq_1 \cos\left(2 t\right) + \qq_2 \cos\left(t / 4\right),$$
for $t = 1$ to $1000$, where 
$Q = \begin{bmatrix} \qq_1 & \qq_2\end{bmatrix} = \begin{bmatrix} 1/3 & 2 / \sqrt{5} \\ 2/3 & 1 / \sqrt{5} \\ 2/3 & 0 \end{bmatrix}$.
This model has been decomposed with ICA, and used for video motion editing and analysis \citep{shapiro2006style}. Here, we apply PCA and compare it to DMD. In Figure \ref{fig:eigwalk_intro}, we display the results of unmixing with PCA and with DMD. We observe that DMD successfully unmixes the cosines, while PCA fails: note that unless the $\qq_i$ are orthogonal, there is no hope of a successful unmixing. Moreover, the estimation of of $Q$ from PCA fails, as we find that
$\widehat{Q}_{PCA} = \begin{bmatrix} -0.686895 &  0.624695\\-0.623497 & -0.243983\\ -0.373399 &  -0.741774\end{bmatrix}$,
 which has a squared error of $0.81$, while the estimate from DMD has a squared error of $2.9 \times 10^{-7}$, where the error is computed according to (\ref{eq:generalboundshift}).

\begin{figure*}[htb]
\centering
\begin{multicols}{4}
\begin{minipage}[b]{0.9\linewidth}
\centering
\centerline{\includegraphics[width = \textwidth, trim = {0 0 0 0.5cm}, clip]{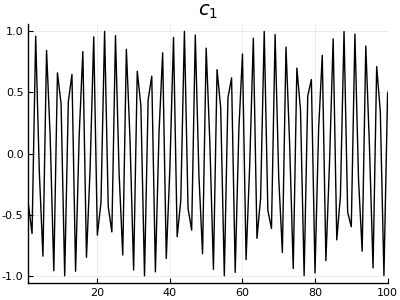}}
\subcaption{$\cos\left(2 t\right)$}
\end{minipage}
\begin{minipage}[b]{0.9\linewidth}
\centering
\centerline{\includegraphics[width = \textwidth, trim = {0 0 0 0.5cm}, clip]{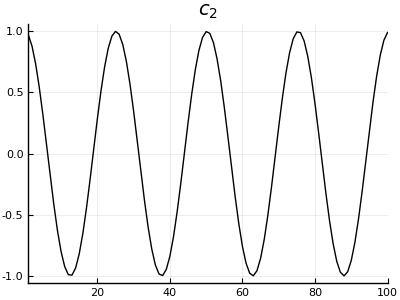}}
\subcaption{$\cos\left(t /4\right)$}
\end{minipage}
\begin{minipage}[b]{0.9\linewidth}
\centering
\centerline{\includegraphics[width = \textwidth, trim = {0 0 0 0.5cm}, clip]{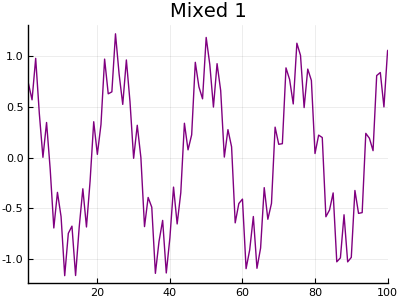}}
\subcaption{Mixed 1}
\end{minipage}
\begin{minipage}[b]{0.9\linewidth}
\centering
\centerline{\includegraphics[width = \textwidth, trim = {0 0 0 0.5cm}, clip]{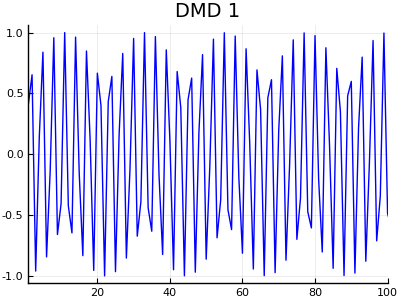}}
\subcaption{DMD 1}
\end{minipage}
\begin{minipage}[b]{0.9\linewidth}
\centering
\centerline{\includegraphics[width = \textwidth, trim = {0 0 0 0.5cm}, clip]{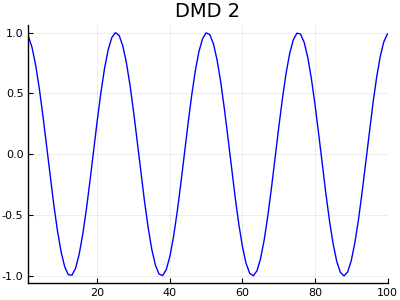}}
\subcaption{DMD 2}
\end{minipage}
\begin{minipage}[b]{0.9\linewidth}
\centering
\centerline{\includegraphics[width = \textwidth, trim = {0 0 0 0.5cm}, clip]{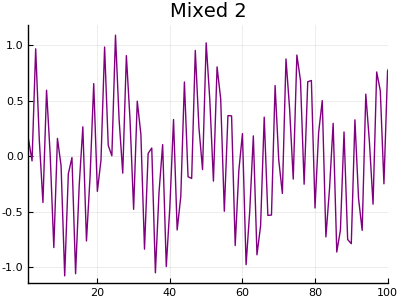}}
\subcaption{Mixed 2}
\end{minipage}
\begin{minipage}[b]{0.9\linewidth}
\centering
\centerline{\includegraphics[width = \textwidth, trim = {0 0 0 0.5cm}, clip]{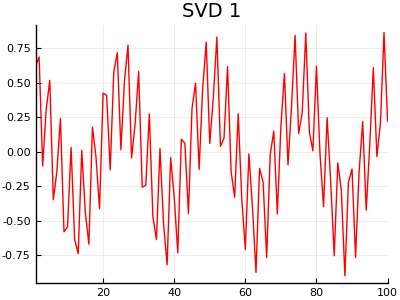}}
\subcaption{PCA 1}
\end{minipage}
\begin{minipage}[b]{0.9\linewidth}
\centering
\centerline{\includegraphics[width = \textwidth, trim = {0 0 0 0.5cm}, clip]{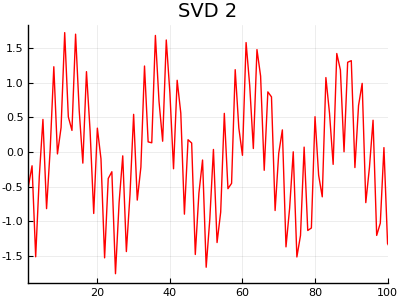}}
\subcaption{PCA 2}
\end{minipage}
\begin{minipage}[b]{0.9\linewidth}
\centering
\centerline{\includegraphics[width = \textwidth, trim = {0 0 0 0.5cm}, clip]{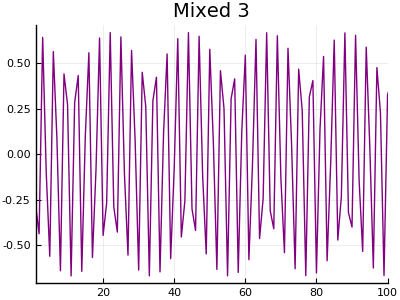}}
\subcaption{Mixed 3}
\end{minipage}
\end{multicols}
\vspace{-0.7cm}
\caption{We generate data according to the eigenwalker model (\ref{eq:eigwalk}), and use DMD and PCA to recover the cosine signals. We observe that DMD recovers the signals, while PCA does not. Indeed, we observe that the squared error for the recovered cosines, defined in (\ref{eq:S_err}), from PCA is $1.97$, whereas that from DMD is $4.57 \times 10^{-7}$. For ease of visualization, we zoom in on the first $100$ samples. }
\label{fig:eigwalk_intro}
\vspace{-0.4cm}
\end{figure*}

\subsection{Connection with other algorithms for time series blind source separation}

Let $H = U \Sigma V^T$ be the singular value decomposition (SVD) of $H$. Then, we have that $\EE[\xx_t] = \bzr_p$ and
\begin{equation}\label{eq:cov}
   \Sigma_{\xx \xx} = \EE [\xx_t \xx_t^T] = H H^T  = U \Sigma^2 U^T.
\end{equation}
Given $\Sigma_{xx}$ and $\xx_t$, we can compute the whitened vector 
\begin{equation}\label{eq:wt}
    \ww_t =  \Sigma_{\xx \xx}^{-1/2} \xx_t,
\end{equation}
whose covariance matrix is given by $\EE [\ww_t \ww_t^T ] = \Ii_p$. Then from (\ref{eq:mixing model}) and (\ref{eq:cov}) we have that
\begin{equation}\label{eq:wt2 model}
    \ww_t =  (U V^T) \sss_t,
\end{equation}
where the mixing matrix $UV^T$ is an orthogonal matrix because the $U$ and $V$ matrices, which correspond to the left and right singular vector matrices of $H$ in (\ref{eq:mixing model}) are orthogonal. 

Equation (\ref{eq:wt2 model}) reveals that we can solve the blind source separation problem and unmix $\sss_t$ from observations of $\ww_t$  if we can infer the orthogonal mixing matrix $UV^T$ from data. To that end, we note that
\begin{equation}\label{eq:whitened lagcorr}
\EE \left[\ww_t \ww_{t+ \tau}^T\right] =  (U V^T) \EE \left[\sss_t \sss_{t+ \tau}^T\right] (UV^T)^T = (UV^T) \EE [L_{\tau}] (UV^T)^T.
\end{equation}

Equation (\ref{eq:whitened lagcorr}) reveals that when the latent signals $\sss_t$ are lag-1 uncorrelated, \textit{i.e.}, $\EE [L_{1}]$ is a diagonal matrix, then the lag-1 covariance matrix of the whitened vector $\ww_t$ will be diagonalized by the orthogonal matrix $UV^T$. The sample lag-1 covariance matrix computed from finite data will, in general, not be symmetric and so we might infer $UV^T$ from the eigenvectors of the symmetric part: this leads to the  AMUSE (Algorithm for Multiple Unknown Signals Extraction) method \citep{tong1990amuse}.

A deeper inspection of (\ref{eq:whitened lagcorr}) reveals that if $\sss_t$ are second order, weakly stationary time series that are uncorrelated for multiple values of $\tau$ (corresponding to multiple lags), then we can infer $(UV^T)$ (which, incidentally corresponds to the  polar part of the polar decomposition of the mixing matrix $H$ in (\ref{eq:mixing model})) by posing it as  joint-diagonalization of $\EE \left[\ww_t \ww_{t+ \tau_i}^T\right] $ for $l$ lags corresponding to $\tau_1, \ldots, \tau_l$. This is the basis of the Second-Order Blind Identification (SOBI) method \citep{belouchrani1997blind} where the joint diagonalization problem is addressed by finding the orthogonal matrix $\Gamma$ that minimizes the sums-of-squares of the off-diagonal entries of $\Gamma^T \EE \left[\ww_t \ww_{t+ \tau_i}^T\right] \Gamma$. Numerically, this problem is solvable via the JADE method \citep{cardoso1993blind, JADEPackage, miettinen2016separation}.

Miettinen et al analyze the performance of a symmetric variant of the SOBI method in \citet{miettinen2016separation} and the problem of determining the number of latent signals that are distinct from white noise in \citet{matilainen2018number}. Their results for the performance are asymptotic and distributional. That is, the limiting distribution of the estimated matrix $\Gamma$ is computed, when the input signals are realizations of some time series, with zero mean and diagonal autocorrelations at every lag $\tau \in \{0, 1, 2, \ldots\}$. As will be seen in what follows, these assumptions are very similar to those that we impose on DMD. Our analysis for the missing data setting is new and has no counter-part in the SOBI or AMUSE performance analysis literature. 

In Table \ref{tab:algorithms}, we summarize  the various algorithms for unmixing of stationary time series. Table \ref{tab:algorithms} brings into sharp focus the manner in which DMD and $\tau$-DMD are similar to and different from the AMUSE and SOBI algorithms. All algorithms diagonalize a matrix; SOBI and AMUSE estimate orthogonal matrices while DMD and $\tau$-DMD estimate non-orthogonal matrices. The SOBI and AMUSE algorithms diagonalize cross-covariance matrices formed from whitened time series data  while DMD and $\tau$-DMD works on the time series data directly. Thus SOBI and AMUSE explicitly whiten the data while DMD implicitly whitens the data. SOBI and DMD exhibit similar performance (see Figure. \ref{fig:SOBI}) -- a more detailed theoretical study comparing their performance in the noisy setting is warranted.

\begin{table*}[htb]
\small
    \centering
    {\renewcommand{\arraystretch}{2.1}%
    \begin{tabular}{|c|c|c|c|}
    \hline
    Algorithm   &   Key Matrix  &   Fit for Key Matrix   &   Numerical Method    \\
    \hline
    DMD         &   $\widehat{A} = X_{(1)} \left[X_{(0)}\right]^+$     &   $Q L_1 Q^{+}$, $Q$ non-orthogonal   &   Non-Symmetric Eig.   \\
    \hline
    $\tau$-DMD  &   $\widehat{A}_{\tau} = X_{(1)}^{\tau} \left[X_{(0)}^{\tau} \right]^{+}$   & $Q L_{\tau} Q^{+}$, $Q$ non-orthogonal   &   Non-symmetric Eig.    \\
    \hline
    AMUSE   &  $\widehat{A}_{\tau} = Y_{(1)}^{\tau}  \left[Y_{(0)}^{\tau} \right]^{T}$   &   $\Gamma L_{\tau} \Gamma^T$, $\Gamma$ orthogonal   &  Eig. of Symmetric part     \\
    \hline
    SOBI    &   \shortstack[c]{$\widehat{A}_{\tau_i} = Y_{(1)}^{\tau_i}  \left[Y_{(0)}^{\tau_i} \right]^{T}$,\\$i \in \{1, 2, \ldots l\}$}    &     $\Gamma L_{\tau_i} \Gamma^T$, $\Gamma$ orthogonal     &   Joint Diagonalization \\
    \hline
    \end{tabular}}
    \caption{Comparison of the various second order algorithms for time series blind source separation. Here $Y = \left[X X^T\right]^{-1/2} X$, is the whitened data matrix and $Y_{(0)}^{\tau}$ and $Y_{(1)}^{\tau}$ are defined analogous to $X_{(0)}^{\tau}$ and $X_{(1)}^{\tau}$, as in (\ref{eq:X0X1}), (\ref{eq:X0X1tau}), and (\ref{eq:Ltau}), respectively.}
    \label{tab:algorithms}
\end{table*}

\subsection{Organization}

The remainder of this paper is organized as follows. In Section \ref{sec:model}, we introduce the time series data matrix model and describe the DMD algorithm in  Section \ref{ssec:dmd}. We describe a higher lag extension of DMD, which we call $\tau$-DMD, in Section \ref{sec:lagdmd}. We provide a DMD performance guarantee for unmixing deterministic signals in Section \ref{sec:genthm}; a corollary of that result in Section \ref{ssec:costhm} explains why DMD is particularly apt for unmixing multivariate mixtures of Fourier series such as  the ``eigen-walker'' model. We extend our analysis to  stationary, ergodic time series data in Section \ref{ssec:lagtau_random}. In Section \ref{sec:Sthm}, we provide results for the estimation error of the latent signals. We analyze the setting where the time series data matrix has randomly missing data in  Section \ref{sec:missing}. We validate our theoretical results with numerical simulations in Section \ref{sec:numerical simulations}. In  Section \ref{sec:realdata}, we describe how a time series matrix can be factorized using DMD to obtain a Dynamic Mode Factorization (DMF) involving the product of the DMD  estimate of the (column-wise normalized) mixing matrix and the coordinates, which represent the unmixed latent signals. We show how DMF can be applied to the cocktail party problem in \citet{choi2005blind} in Section \ref{sec:realdata} and how unmixing the latent series via DMF can help improve time series change point detection in  Section \ref{sec:changepoint}. We offer some concluding remarks in Section \ref{sec:conclusions}. The proofs of our results are deferred to the appendices. 

\subsubsection{Summary of Theorems}

{A contribution of this is a non-asymptotic finite sample performance analysis for the DMD and $\tau$-DMD algorithm in the setting  where the mixed deterministic signals or stationary, ergodic time series are approximately (or exactly) one- or higher lag uncorrelated. Our main results will concern the estimation errors of the mixing matrices. Theorem \ref{thm:shift} presents a general result with bounds for deterministic signals and all lags $\tau \geq 1$.  Corollary \ref{thm:cos} present bounds for the lag-one, deterministic case where the latent signals are cosines. Theorem \ref{thm:lagtautimeseries} generalizes Theorem \ref{thm:shift} to the setting where the latent signals are realizations of a stationary, ergodic time series. We present results for the estimation of the latent signals in Theorem \ref{thm:S_bound}, and extend the results to missing data in \ref{thm:missing}.}

\section{Model and Setup} \label{sec:model}

Suppose that, at time $t$, we are given a $p$ dimensional time series vector 
$$\xx_t = \begin{bmatrix} x_{1t} & x_{2t} & \ldots & x_{pt}\end{bmatrix}^T,$$
where an individual entry $x_{jt}$, for $ j = 1, 2, \ldots, p$, of $\xx_t$ is modeled as
\begin{equation}\label{eq:xjt}
x_{jt} = \sum_{i = 1}^k b_{ij} c_{it},
\end{equation}
and $b_{ij}$ is the $j^{th}$ entry of a $p$ dimensional vector $\bb_i$. Each $c_{it}$ is the $t^{th}$ entry of an $n$ dimensional vector $\cc_i$, and the $c_{it}$ are samples of a time series. Equation (\ref{eq:xjt}) can be succinctly written in vector form as 
\begin{equation} \label{eq:start model}
\xx_{t} = \sum_{i = 1}^k \bb_i c_{it} 
= B \, \begin{bmatrix} c_{1t} \\ \vdots \\ c_{kt}\end{bmatrix},
\end{equation}
where the $p \times k$ matrix $B$ is defined as 
$B = \begin{bmatrix} \bb_1 & \cdots & \bb_k\end{bmatrix}$.
We are given samples $\xx_1, \ldots, \xx_n$ corresponding to uniformly spaced time instances $t_1, \ldots t_n$. In what follows, without loss of generality, we assume that $t_i = i$. Let $X$ be the $p \times n$ matrix defined as
\begin{equation}
X = \begin{bmatrix} \xx_1 & \cdots & \xx_n\end{bmatrix}.
\end{equation}
We define the $n \times k$ matrix $C$ with columns $\cc_1, \ldots, \cc_k$ as
\begin{equation}\label{eq:C}
C^T = \left\{ \begin{bmatrix} & c_{1t} & \\ \cdots & \vdots & \cdots \\ &c_{kt} & \end{bmatrix}\right\}_{t = 1}^n.
\end{equation}
Consequently, we have that
\begin{equation}\label{eq:X1}
X = B\,C^T,
\end{equation}
where $C^T$ is the ``latent time series'' matrix given by (\ref{eq:C}). Equation (\ref{eq:X1}) reveals that 
the multivariate time series matrix $X$ is a linear combination of rows of the latent time series matrix. 

Suppose that for $i =1 , \ldots, k$, 
\begin{equation}\label{eq:qs}
\qq_i = \frac{\bb_i}{\|\bb_i\|_2} \textrm{ and } \sss_i = \frac{\cc_i}{\|\cc_i\|_2},
\end{equation}
and the matrices 
\begin{equation}\label{eq:QandS}
Q = \begin{bmatrix} \qq_1 & \cdots & \qq_k\end{bmatrix} \textrm{ and }  
S = \begin{bmatrix} \sss_1 & \cdots & \sss_k\end{bmatrix}.
\end{equation}
Then, from (\ref{eq:X1}), and from the definition of $Q$ and $S$, it can be shown that 
\begin{equation}\label{eq:our model}
X = Q\, D S^T 
\end{equation}
where, for $i = 1, \ldots, k$, 
\begin{equation}\label{eq:D}
D = \textrm{diag}\left(\ldots,\|\bb_i\|_2 \cdot \|\cc_i\|_2,\ldots \right).
\end{equation}
We will define 
\begin{equation}
d_i = \|\bb_i\|_2 \cdot \|\cc_i\|_2,
\end{equation}
and assume that, without loss of generality, the $d_i$ and hence the $\bb_i$, $\cc_i$, $\qq_i$, and $\sss_i$ are ordered so that 
\begin{equation}
d_1 \geq d_2 \geq \ldots \geq d_k > 0.
\end{equation}

Note that by construction, in (\ref{eq:our model}), the $k$ columns of the matrices $Q$ and $S$ have unit norm. In what follows, we assume that $Q$ and $S$ have linearly independent columns, that $k \leq p \leq n - 1$, that the columns of $S$ have zero mean, and that the columns of $Q$ are canonically \textit{non-random} and \textit{non-orthogonal}. Our goal in what follows is to estimate the columns of the matrices $Q$ and $S$.

\subsection{Dynamic Mode Decomposition (DMD)}\label{ssec:dmd}

From (\ref{eq:start model}), we see that the columns of $X$ represent a multivariate time series. We first partition the matrix $X$ into two $p \times n-1$ matrices 
\begin{equation}\label{eq:X0X1}
X_{(0)} = \begin{bmatrix} \xx_1 & \xx_2 & \cdots & \xx_{n - 1}\end{bmatrix} 
\textrm{ and }
X_{(1)} = \begin{bmatrix} \xx_2 & \xx_3 & \cdots & \xx_{n}\end{bmatrix}.
\end{equation}
We then compute the $p \times p$ matrix $\widehat{A}$ via the solution of the 
optimization problem
\begin{equation}\label{eq:dmd opt}
\widehat{A} = \argmin_{A \in \RR^{p \times p}} \left\|X_{(1)} - A X_{(0)}\right\|_F.
\end{equation}
The minimum norm solution to (\ref{eq:dmd opt}) is given by 
\begin{equation}\label{eq:Ahat}
\widehat{A} = X_{(1)} X_{(0)}^{+},
\end{equation}
where the superscript $^+$ denotes the Moore-Penrose pseudoinverse. Note that $\widehat{A}$ will be a non-symmetric matrix with a rank of at most $k$ because $X$, from which $X_{(1)}$ and $X_{(0)}$ are derived, has rank $k$ from the construction in (\ref{eq:our model}). Let
\begin{equation}\label{eq:eig dmd}
\widehat{A} = \widehat{Q} \widehat{\Lambda} \widehat{Q}^{+},
\end{equation}
be its eigenvalue decomposition. In (\ref{eq:eig dmd}), $\widehat{\Lambda} = \textrm{diag}(\lambda_1,\ldots, \lambda_k)$ is a $k \times k$ diagonal matrix, where the $\lambda_i$, ordered as $|\lambda_1| \geq |\lambda_2| \geq \ldots \geq |\lambda_k| > 0$, are the, possibly complex, eigenvalues of $\widehat{A}$ and $\widehat{Q}$ is a $p \times k$ matrix of, generically non-orthogonal, unit-norm eigenvectors, denoted by $\widehat{\qq}_i$. 

In what follows, we will refer to the computation of (\ref{eq:Ahat}) and the subsequent decomposition (\ref{eq:eig dmd}) as the DMD algorithm and we will show that under certain conditions, $\widehat{\qq}_i$ is close to $\qq_i$. 

\section{A Natural Generalization: \texorpdfstring{$\tau-$DMD}{Lags other than 1}} \label{sec:lagdmd}

We have just described the DMD algorithm at a lag of $1$. That is, we let $X_{(0)}$ and $X_{(1)}$ differ by one time-step. However, we might easily allow $X_{(0)}$ and $X_{(1)}$ to differ by $\tau$ time steps, and in certain settings, it may be advantageous to use $\tau > 1$. 

From (\ref{eq:start model}), we recall that the columns of $X$ represent a multivariate time series. We first partition the matrix $X$ into two $p \times n - \tau$ matrices:
\begin{equation}\label{eq:X0X1tau}
X_{(0)}^{\tau} = \begin{bmatrix} \xx_1 & \xx_2 & \cdots & \xx_{n - \tau}\end{bmatrix}
\textrm{ and } 
X_{(1)}^{\tau} = \begin{bmatrix} \xx_{1 + \tau} & \xx_{2 + \tau} & \cdots & \xx_{n}\end{bmatrix}.
\end{equation}
At this point, the procedure is identical to the DMD algorithm: we compute the $p \times p$ matrix $\widehat{A}(\tau)$ via the solution of the 
optimization problem
\begin{equation}\label{eq:dmdtauopt}
\widehat{A}_{\tau} = \argmin_{A \in \RR^{p \times p}} \left\|X_{(1)}^{\tau}  - A X_{(0)}^{\tau} \right\|_F,
\end{equation}
and the minimum norm solution to (\ref{eq:dmdtauopt}) is given by
\begin{equation}\label{eq:Ahattau}
\widehat{A}_{\tau}= X_{(1)}^{\tau}  \left(X_{(0)}^{\tau} \right)^{+}.
\end{equation}
Once again, let
\begin{equation}\label{eq:eig dmd tau}
\widehat{A}_{\tau} = \widehat{Q} \widehat{\Lambda} \widehat{Q}^{+},
\end{equation}
be its eigenvalue decomposition. In (\ref{eq:eig dmd tau}), $\widehat{\Lambda} = \textrm{diag}(\lambda_1,\ldots, \lambda_k)$ is a $k \times k$ diagonal matrix, where $|\lambda_1| \geq |\lambda_2| \geq \ldots \geq |\lambda_k| \geq 0$ are the (possibly complex) eigenvalues of $\widehat{A}_{\tau}$ and $\widehat{Q}$ is a $p \times k$ matrix of, generically non-orthogonal, unit-norm eigenvectors that are denoted by $\widehat{\qq}_i$. 

In what follows, we will refer to the computation of (\ref{eq:Ahattau}) and the subsequent decomposition (\ref{eq:eig dmd tau}) as the $\tau$-DMD algorithm. Note that the DMD algorithm is a special case of the $\tau$-DMD algorithm, and when we say `DMD' without any qualifiers, we mean the $\tau = 1$ setting.

\section{Performance Guarantee} \label{sec:genthm}

The central object governing the performance of the $\tau$-DMD algorithm is the lag-$\tau$ cross covariance matrix. Let the $k \times k$ lag-$\tau$ covariance matrix $L_{\tau}$ defined as
\begin{equation}\label{eq:Ltau}
\left[L_{\tau}\right]_{ij} = \sum_{l = 1}^{n} S_{i, l} S_{j, [l+\tau] \textrm{ mod } n}.
\end{equation}
Note that we can succinctly express $L_{\tau}$ as $L_{\tau} = S^T(P^{\tau} S)$ where $P$ is the matrix formed by taking the $n \times n$ identity matrix and circularly right shifting the columns by one. 

\subsection{Technical Assumptions}

We will require the following set of technical assumptions on the data. 

\begin{subequations}\label{eq:assumptions_general}
\begin{enumerate}
\item
Assume that $k$ is fixed, with 
\begin{equation} 
k \leq \min\left\{p, n - \tau\right\}
\end{equation}
\item
Assume that the $\qq_i$ are linearly independent, so that ${\sigma_1(Q)}/{\sigma_k(Q)}$ is a finite quantity:
\begin{equation}
1 \leq \frac{\sigma_1(Q)}{\sigma_k(Q)} < \infty.
\end{equation}
Here, $\sigma_i(Q)$ denotes the $i^{th}$ singular value of $Q$. Essentially, the conditioning of the $\qq_i$ is independent of $n$ and $p$. {Moreover, the $\qq_i$ are canonically \textit{non-random} and \textit{not necessarily orthogonal}.}
\item
Assume that 
\begin{equation} \label{eq:d_ratio}
\lim_{n \rightarrow \infty} \frac{d_1}{d_k} \nrightarrow \infty,
\end{equation}
i.e., that the limit of the ratio is finite. 
\item
Assume that columns of $S$ (the $\sss_i$) each have zero mean (the sum of each column is zero), and that they are linearly independent. Moreover, assume that there exists an $\alpha > 0$ such that 
\begin{equation} \label{eq:S_coherence}
\max_{i, j} |S_{ij}| = O\left(\frac{1}{n^{\alpha}}\right).
\end{equation}
I.e., the $\sss_i$ are not too sparse. 
\item
Assume that $\tau$ is small relative to $n$; i.e., that
\begin{equation}
\tau n^{-2 \alpha} \nrightarrow \infty
\textrm{ and }
n - \tau \approx n
\textrm{ for large $n$}. 
\end{equation}
\end{enumerate}
\end{subequations}
\begin{remark}
{Conditions 1, 2, and the first part of 4 are required for the data matrix to actually have rank $k$. I.e., if there are $k$ latent signals, we need the columns of $Q$ to be linearly independent and we need the signals to be linearly independent to recover all $k$ signals and the $k$ columns of $Q$ and not linear combinations thereof. We need at least as many linear combinations and samples as there are signals to recover the signals. Moreover, the linear independence and full column rank conditions yield that $Q$ and $S$ are unique, and hence can (in principle) be estimated uniquely up to a sign or phase shift. Note that for a rank $k$ matrix, there are many different possible factorizations, but our results here will identify when the specific $Q$ and $S$ matrices can be recovered. Condition 3 ensures that, in the limit, we can recover all $k$ signals. Intuitively, if the ratio (\ref{eq:d_ratio}) diverged, the data matrix would eventually have a numerical rank smaller than $k$, and the smallest signal would look like noise relative to the largest. Finally, the second part of condition 4 ensures that the latent signals are sufficiently dense, or that they are not very transient. That is, the signals are not something like a spike. Condition 4 is purely technical and is needed for the proofs of the performance bounds. Finally, condition 5 is technical, and ensures that each of $X_{(0)}^{\tau}$ and $X_{(1)}^{\tau}$ contain enough information. }
\end{remark}

\subsection{Deterministic Signals}

We now establish a recovery condition for the setting where $\cc_i$ in (\ref{eq:C}) are deterministic. 

\begin{remark}
In the following result and in all subsequent results, there is an ambiguity or mismatch between the ordering of the $\qq_i$, $\sss_i$, $d_i$, and $\left[L_{\tau}\right]_{ii}$ with that of the $\widehat{\qq}_j$ and $\lambda_j$. Formally, there exists a permutation $\sigma(i)$ that reorders the $\widehat{\qq}_j$ and $\lambda_j$ to correspond to the $\qq_i$ and other quantities, such that the error is minimal. In the statement of our results, without loss of generality, we will assume that $\sigma(i) = i$, i.e., that it is the identity permutation.  
\end{remark}

\begin{theorem}[$\tau$-lag DMD] \label{thm:shift}
For $X$ as in (\ref{eq:our model}) and $L_{\tau}$ defined as in (\ref{eq:Ltau}), suppose that the conditions in (\ref{eq:assumptions_general}) hold. Further suppose that 
\begin{subequations}
\begin{equation}
\lim_{n \to \infty} \left|\left[L_{\tau}\right]_{ii}\right| \nrightarrow 0.
\end{equation}
 \textrm{Moreover, assume that for $i \neq j$ we have that}
 \begin{equation}
 \left|\left[L_{\tau}\right]_{ij}\right| = O(f(n)) \textrm{ and } \left|\sss_i^T \sss_j\right| = O(f(n))
 \end{equation}
for some $f(n)$ such that 
$\lim_{n \rightarrow \infty} f(n) = 0$.
\end{subequations}

\noindent
a) Then, assuming that 
$p_i$ is given by 
\begin{equation} \label{eq:q_sign}
p_i = \textrm{sign}\left(\widehat{\qq}_i^T \qq_i\right),
\end{equation}
we have that 
\begin{subequations}
\begin{equation}\label{eq:generalboundshift}
\sum_{i = 1}^k \left\|\widehat{\qq}_i - p_i \qq_i\right\|_2^2 = \! O\left(\left[\dfrac{d_1}{d_k}\right]^2 \cdot \dfrac{k^7}{\delta_{L}^2} \cdot \left[f^2(n) + \tau n^{-2\alpha} \right]\right),
\end{equation}
\textrm{where $\delta_{L}$ is given by}
\begin{equation} \label{eq:delta_L_tau}
{\delta_{L} = \min_{i \neq j} \left|\left[L_{\tau}\right]_{ii} - \left[L_{\tau}\right]_{jj}\right|.}
\end{equation}

\noindent
b) Moreover, for each $\left[L_{\tau}\right]_{ii}$, we have that 
\begin{equation}\label{eq:evalboundshift}
\left|\left[L_{\tau}\right]_{ii} - \lambda_i\right|^2 = O\left(\left[\dfrac{d_1}{d_k}\right]^2 \cdot {k^6}\cdot \left[f^2(n) + \tau n^{-2\alpha} \right]\right).
\end{equation}
\end{subequations}
\end{theorem}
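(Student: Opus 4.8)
The plan is to work with the finite-sample matrix $\widehat{A}_\tau = X_{(1)}^\tau (X_{(0)}^\tau)^{+}$, show that it is a small perturbation of an idealized matrix $A^\star$ whose eigenpairs are exactly $(\qq_i, [L_\tau]_{ii})$ up to the stated normalization, and then transfer the perturbation bound to the eigenvectors and eigenvalues via a $\sin\Theta$-type / eigenvalue-perturbation argument adapted to the non-normal setting. First I would substitute the factorization $X = Q D S^T$ into $X_{(0)}^\tau$ and $X_{(1)}^\tau$, writing $X_{(0)}^\tau = Q D (S^\tau_{(0)})^T$ and $X_{(1)}^\tau = Q D (S^\tau_{(1)})^T$, where $S^\tau_{(0)}, S^\tau_{(1)}$ are the truncated shifted signal matrices. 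Using $k \le p$ and the linear independence of the columns of $Q$, the pseudoinverse factors as $(X_{(0)}^\tau)^{+} = (S^\tau_{(0)})^{+T} D^{-1} Q^{+}$ on the appropriate subspace, so that
\begin{equation}
\widehat{A}_\tau = Q D \Big[(S^\tau_{(1)})^T (S^\tau_{(0)})^{+T}\Big] D^{-1} Q^{+}.
\end{equation}
Hence $\widehat{A}_\tau = Q D M D^{-1} Q^{+}$ with $M = (S^\tau_{(1)})^T (S^\tau_{(0)})^{+T} \in \RR^{k\times k}$, and the DMD eigenvectors $\widehat\qq_i$ are $Q$ times the eigenvectors of the $k\times k$ matrix $DMD^{-1}$, while the $\lambda_i$ are its eigenvalues. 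This reduces the whole problem to a $k\times k$ spectral-perturbation statement, which is where the $k^6$–$k^7$ factors and the $d_1/d_k$ conditioning factor will enter.

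Next I would identify the ideal target. Since the columns of $S$ have zero mean, $(S^\tau_{(0)})^{+T}$ is close to $S^\tau_{(0)}$ up to the Gram matrix $S^T S$; under the near-orthogonality hypotheses $|\sss_i^T\sss_j| = O(f(n))$ and $\max_{ij}|S_{ij}| = O(n^{-\alpha})$, both $S^T S$ and the boundary-correction terms (the difference between a circular shift, which defines $L_\tau$ in (\ref{eq:Ltau}), and the linear truncation that defines $S^\tau_{(0)}, S^\tau_{(1)}$) are $I_k + O(f(n)) + O(\tau n^{-2\alpha})$ and $L_\tau + O(\tau n^{-2\alpha})$ respectively. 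Therefore $M = L_\tau + E$ with $\|E\| = O(f(n) + \tau n^{-2\alpha})$, and consequently $DMD^{-1} = D L_\tau D^{-1} + D E D^{-1}$ with $\|D E D^{-1}\| \le (d_1/d_k)\|E\|$. The matrix $D L_\tau D^{-1}$ has the same eigenvalues as $L_\tau$, namely $[L_\tau]_{ii} + O(f(n))$ off-diagonal corrections, so its eigenvalues are within $O(f(n))$ of $[L_\tau]_{ii}$ and — crucially — its eigenvectors are $D e_i$ up to $O(f(n))$; multiplying back by $Q$ recovers $\qq_i$ (after unit-normalization, which is where the signs $p_i$ come in).

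The final step is the spectral perturbation estimate. The eigenvalue bound (\ref{eq:evalboundshift}) follows from Bauer–Fike (or Elsner/Ostrowski) applied to $DMD^{-1}$, charging a factor equal to the condition number of its eigenvector matrix — which is itself controlled by $\sigma_1(Q)/\sigma_k(Q)$ and $d_1/d_k$ and powers of $k$ — against the perturbation size $O(f(n) + \tau n^{-2\alpha})$; squaring gives the stated form. For the eigenvector bound (\ref{eq:generalboundshift}) I would use the gap $\delta_L = \min_{i\ne j}|[L_\tau]_{ii} - [L_\tau]_{jj}|$ and a non-normal $\sin\Theta$ argument (e.g. via the resolvent / Sylvester-equation form of first-order eigenvector perturbation), which produces the $1/\delta_L^2$ factor after squaring and summing over $i$; the accumulation of conditioning factors over all $k$ coordinates and the transfer from the $k\times k$ problem back through $Q$ to the ambient $\RR^p$ is what yields $k^7$. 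I expect the main obstacle to be the non-normality: standard Hermitian $\sin\Theta$ theorems do not apply to $\widehat{A}_\tau$ or to $DMD^{-1}$, so I must carefully track the eigenvector-matrix condition number through the perturbation argument and verify it stays bounded in $n$ (using conditions 2 and 3), and separately I must be careful that the perturbation $E$ does not destroy the eigenvalue gap — i.e. that $\delta_L$ dominates $\|DED^{-1}\|$ asymptotically, which is implicit in the hypothesis $\lim|[L_\tau]_{ii}| \nrightarrow 0$ together with $f(n)\to 0$.
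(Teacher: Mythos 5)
Your proposal matches the paper's proof in all essential respects: the same factorization $X_{(0)}^\tau = QD(S_{(0)}^\tau)^T$, $X_{(1)}^\tau = QD(S_{(1)}^\tau)^T$, the same identification of the shifted signal matrix as a circular shift of $S_{(0)}^\tau$ plus an $O(\sqrt{k\tau}\,n^{-\alpha})$ boundary correction, the same use of $(S_{(0)}^\tau)^{+} \approx (S_{(0)}^\tau)^{T}$ together with the smallness of the off-diagonal entries of $L_\tau$ to exhibit $\widehat{A}_\tau$ as $Q\,\mathrm{diag}\left(\left[L_\tau\right]_{ii}\right)Q^{+}$ plus a perturbation of squared Frobenius norm $O\left(\left(d_1/d_k\right)^2 k^6\left[f^2(n)+\tau n^{-2\alpha}\right]\right)$, and the same concluding non-normal first-order eigenpair perturbation with gap $\delta_L$ (the paper invokes Demmel's eigenvalue expansion and Meyer's group-inverse eigenvector expansion rather than Bauer--Fike and a resolvent argument, but these are interchangeable here). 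The only cosmetic difference is that you apply the perturbation theory to the reduced $k\times k$ matrix $DMD^{-1}$ whereas the paper perturbs $Q\Lambda Q^{+}$ directly in $\RR^{p\times p}$; this does not change the substance of the argument.
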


Note that the bound (\ref{eq:generalboundshift}) depends on $\delta_{L}$: if two of the signals have identical lag-$\tau$ autocorrelations, the bound becomes trivial and the signals may not be able to be unmixed. 

{Moreover, this result is entirely in terms of the latent signals, $\sss_i$: $f(n)$ is the lag-$1$ cross correlation decay rate, $\alpha$ governs the sparsity/density of the signals, and $d_i$ is the magnitude of each signal. We have specified conditions on the latent signals such that they may be unmixed. Of course, without knowledge of the latent signals, these bounds are not computable. Noting that $\delta_{L}$ is a function of $\tau$, we anticipate that some values of $\tau$ would lead to better results than others: we will demonstrate this behavior numerically in Section \ref{sec:numerical simulations}.}

\subsection{Application of Theorem \ref{thm:shift}: DMD Unmixes Multivariate Mixed Fourier Series}\label{ssec:costhm}
Consider the setting where $c_{it}$ in (\ref{eq:xjt}) is modeled as
\begin{equation} \label{eqn:cos}
c_{it} = \cos\left(\omega_i t + \phi_i\right). 
\end{equation}
The $x_{it}$ is thus a linear mixture of Fourier series. This model frequently comes up in many applications such as the eigenwalker model for human motion:
\citet[Equations (1) and (2)]{troje2002decomposing}, \citet{troje2002little} and \citet[Equations (1) and (2)]{unuma1995fourier}.

This model fits into the framework of Theorem \ref{thm:shift} via an application of Corollary \ref{thm:cos} below. This implies the DMD modes will correctly correspond to the non-orthogonal mixing modes. Using PCA on the data matrix in this setting would recover orthogonal modes that would be linear combinations of the latent non-orthogonal dynamic modes.

\begin{corollary}[Mixtures of Cosines] \label{thm:cos}
Assume that the $\cc_i$ are given by (\ref{eqn:cos}), that the $p_i$ are given by (\ref{eq:q_sign}), and that we apply DMD with $\tau = 1$. Then we have that 
\begin{subequations}
\begin{equation} \label{eq:cos_bound}
\sum_{i = 1}^k \left\|\widehat{\qq}_i - p_i \qq_i\right\|_2^2 = O\left(\left[\dfrac{d_1}{d_k}\right]^2 \cdot \dfrac{k^7}{\delta_{L}^4} \cdot \dfrac{1}{n}\right),
\end{equation}
\textrm{where}
\begin{equation} \label{eq:delta_cos}
\delta_{L} = \min_{i \neq j} \left|\cos \omega_i - \cos \omega_j\right|,
\end{equation}
and that for each $\omega_i$, we have that
\begin{equation}
\left|\cos \omega_i - \lambda_i\right|^2 = O\left(\left[\dfrac{d_1}{d_k}\right]^2  \cdot \dfrac{k^6}{n}\right).
\end{equation}
\end{subequations}
\end{corollary}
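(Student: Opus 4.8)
The plan is to obtain Corollary~\ref{thm:cos} as a direct instantiation of Theorem~\ref{thm:shift} at $\tau = 1$, so that the real work reduces to three things: checking that the cosine model (\ref{eqn:cos}) satisfies the technical assumptions (\ref{eq:assumptions_general}); evaluating the model-specific quantities $\alpha$, $f(n)$, and the entries of $L_1$; and reconciling the $\delta_L$ that appears in Theorem~\ref{thm:shift} with the $\delta_L$ in (\ref{eq:delta_cos}). Everything else is inherited verbatim from Theorem~\ref{thm:shift}.

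First I would verify the assumptions. Since $|c_{it}| = |\cos(\omega_i t + \phi_i)| \le 1$ and, for any $\omega_i$ not an integer multiple of $\pi$ (which is forced by the zero-mean and linear-independence requirements on the $\cc_i$), a product-to-sum computation gives $\|\cc_i\|_2^2 = \sum_{l=1}^n \cos^2(\omega_i l + \phi_i) = \tfrac{n}{2} + O(1)$, the normalized entries obey $\max_{i,l}|S_{il}| = O(n^{-1/2})$; hence the coherence condition (\ref{eq:S_coherence}) holds with $\alpha = 1/2$, and condition~5 is trivial for $\tau = 1$. The remaining conditions (fixed $k$, well-conditioned non-orthogonal $Q$, bounded $d_1/d_k$, zero-mean linearly independent $\sss_i$) are hypotheses I simply carry over.

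Next I would compute $L_1$. Writing $\cos(\omega_i l + \phi_i)\cos(\omega_i(l+1) + \phi_i) = \tfrac12\cos\omega_i + \tfrac12\cos(2\omega_i l + \omega_i + 2\phi_i)$ and summing, the oscillatory term contributes $O(1)$ (a bounded partial geometric sum, using $\omega_i \ne 0,\pi$), and the circular wrap-around in (\ref{eq:Ltau}) only swaps one bounded summand for another; after dividing by $\|\cc_i\|_2^2 = \tfrac{n}{2}+O(1)$ this yields $[L_1]_{ii} = \cos\omega_i + O(1/n)$, which in particular does not vanish in the limit. For $i\ne j$ the analogous expansion of $\cos(\omega_i l+\phi_i)\cos(\omega_j(l+1)+\phi_j)$, and of $\sss_i^T\sss_j$, produces only genuinely oscillatory terms at frequencies $\omega_i\pm\omega_j$ — here I use $\omega_i\ne\pm\omega_j$, which is exactly linear independence of the $\cc_i$ — so $|[L_1]_{ij}|$ and $|\sss_i^T\sss_j|$ are $O(1/n)$ after normalization. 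Thus Theorem~\ref{thm:shift} applies with $f(n)=O(1/n)$, and $f^2(n)+\tau n^{-2\alpha} = O(n^{-2})+O(n^{-1}) = O(1/n)$. Feeding this into (\ref{eq:evalboundshift}) and combining with $|\cos\omega_i - [L_1]_{ii}| = O(1/n)$ gives the eigenvalue bound. For the eigenvector bound, (\ref{eq:generalboundshift}) produces $O\!\big([d_1/d_k]^2 k^7 (\delta_L^{\mathrm{Thm}})^{-2} n^{-1}\big)$ with $\delta_L^{\mathrm{Thm}} = \min_{i\ne j}|[L_1]_{ii}-[L_1]_{jj}|$; since $[L_1]_{ii}=\cos\omega_i+O(1/n)$ we have $\delta_L^{\mathrm{Thm}}\ge\delta_L-O(1/n)$, so $\delta_L^{\mathrm{Thm}}\ge\tfrac12\delta_L$ once $n\gtrsim 1/\delta_L$, while for smaller $n$ I fall back on the trivial estimate $\sum_i\|\widehat{\qq}_i - p_i\qq_i\|_2^2\le 4k$, which is itself $O([d_1/d_k]^2 k^7\delta_L^{-4}n^{-1})$ throughout that range. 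Stitching the two regimes together converts the $\delta_L^{-2}$ of Theorem~\ref{thm:shift} into the $\delta_L^{-4}$ of (\ref{eq:cos_bound}).

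I expect the main obstacle to be the trigonometric-sum estimates underlying the third step: making the $O(1)$ bounds on partial sums $\sum_l\cos(\theta l+\psi)$ genuinely uniform, i.e., with constants that do not blow up as some $\omega_i$ approaches $0$ or $\pi$ or as two frequencies approach resonance, and then carefully tracking how those $\omega$-dependent constants propagate through the passage from $[L_1]_{ii}$ to $\cos\omega_i$ — the very step responsible for the extra power of $\delta_L$ — which may in turn force a genericity restriction on the $\omega_i$ (or an absorption of such factors into the implied constant).
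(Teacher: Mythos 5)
Your proposal is correct and follows the same overall strategy as the paper's proof: specialize Theorem \ref{thm:shift} at $\tau=1$, use $\|\cc_i\|_2^2=\tfrac{n}{2}+O(1)$ to get $\alpha=1/2$, and use trigonometric sum identities to show $[L_1]_{ii}\to\cos\omega_i$ and that the off-diagonal terms vanish. The one genuine divergence is in where the $\delta_L^{-4}$ comes from. The paper evaluates the cross-correlation sum in closed form via (\ref{eq:cos_prod}), whose prefactor is exactly $\tfrac{1}{2(\cos\omega_i-\cos\omega_j)}$; it therefore takes $f(n)=\tfrac{1}{\sqrt{n}}\cdot\tfrac{1}{\delta_L}$, so the extra $\delta_L^{-2}$ enters through $f^2(n)$ inside (\ref{eq:generalboundshift}). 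You instead declare $f(n)=O(1/n)$ with the frequency dependence ``absorbed into the implied constant,'' and then manufacture the $\delta_L^{-4}$ by a two-regime stitching argument reconciling $\min_{i\neq j}|[L_1]_{ii}-[L_1]_{jj}|$ with $\min_{i\neq j}|\cos\omega_i-\cos\omega_j|$. That stitching is a legitimate (and more careful than the paper's) treatment of the finite-$n$ versus limiting gap, but it is not really the source of the $\delta_L^{-4}$: in your large-$n$ regime you simply prove the stronger $\delta_L^{-2}/n$ bound and then weaken it. The absorbed constant is the issue you yourself flag, and it is genuinely of size $\asymp 1/\delta_L$ (one has $|\cos\omega_i-\cos\omega_j|=2|\sin\tfrac{\omega_i+\omega_j}{2}\sin\tfrac{\omega_i-\omega_j}{2}|$, and the geometric-sum bounds are controlled by the reciprocals of those sines), so to make the statement honest you should carry it explicitly as the paper does; once you do, $f^2(n)\asymp n^{-2}\delta_L^{-2}$ combined with the theorem's $\delta_L^{-2}$ still lands on the stated $O\bigl([d_1/d_k]^2 k^7\delta_L^{-4}n^{-1}\bigr)$, since the $\tau n^{-2\alpha}=1/n$ term dominates. (Incidentally, your normalization of the cross term by both $\Theta(\sqrt{n})$ norms, giving $O(1/(n\delta_L))$, is sharper than the paper's stated $O(1/(\sqrt{n}\,\delta_L))$; the final rate is unaffected because the $n^{-2\alpha}$ term controls it either way.)
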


Corollary \ref{thm:cos} explains why DMD successfully unmixes the eigenwalker data in Figure \ref{fig:eigwalk_intro}. In that setting, PCA does not succeed because it returns an orthogonal matrix as an estimate of the non-orthogonal  mixing matrix. The ability of DMD to reliably unmix non-orthogonally mixed multivariate Fourier series, and the fact that the eigenvalues are cosines of the frequencies, provides some context for the statement that DMD is a spectral algorithm where the eigen-spectra reveal information on Fourier spectra \citep{rowley2009spectral}.

Note that by Theorem \ref{thm:shift}, we require that the lag-$1$ autocorrelations are distinct. In this case, it is equivalent to requiring that the cosines have distinct frequencies. {In the notation of Theorem \ref{thm:shift}, we have that $\alpha = 1/2$ and $f(n) = 1/\sqrt{n}$.}

\subsection{Extensions of Theorem \ref{thm:shift}: Stationary, Ergodic Time Series}\label{ssec:lagtau_random}

We now consider the setting where  $c_{it}$ are elements of a stationary, ergodic time series and the $\cc_i$, thus formed; {we say that a process is stationary and ergodic when its statistical properties do not change over time, and when they can be estimated from a sufficiently long realization. We point the reader to \cite[Ch.~2.3,~15.4]{jonathan2008time} for formal definitions of these terms.} Consider the matrix
\begin{equation}\label{eq:Ltautimeseries}
\EE \left[L_{\tau}\right]_{ij} = \EE \left[ S_{i, l} S_{j, [l+\tau] \textrm{ mod } n}\right].
\end{equation}

When $\EE {L_{\tau}}$ is diagonal, then $\tau$-DMD asymptotically unmixes the time series, as expressed in the Theorem below. We will require the assumptions from (\ref{eq:assumptions_general}), with the following updates:
\begin{subequations} \label{eq:assumptions_arma}
\begin{enumerate}
\item
Assume that the $\bb_i$, $\cc_i$, $\qq_i$, and $\sss_i$ are ordered so that 
\begin{equation}
\EE d_1 \geq \EE d_2 \geq \ldots \geq \EE d_k > 0,
\end{equation}
where $\EE d_i = \|\bb_i\|_2 \cdot \EE \|\cc_i\|_2$.

\item
Assume that 
\begin{equation} \label{eq:random_d1dk}
\lim_{n \rightarrow \infty} \frac{\EE d_1}{\EE d_k} \nrightarrow \infty,
\end{equation}
i.e., that the limit of the ratio is finite.
\end{enumerate}
\end{subequations}

\begin{theorem}[Stationary, Ergodic Time Series at Lag $\tau$] \label{thm:lagtautimeseries}
Suppose that the conditions in (\ref{eq:assumptions_arma}) hold, in addition to conditions (1, 2, 4, 5) from (\ref{eq:assumptions_general}). 
\begin{subequations}
\begin{equation}\label{eqn:tauts}
1 \leq \tau \leq n^{\frac{r}{2 (r - 2)}},
\end{equation}
for some value of $r \geq 4$. Let the $\cc_i$ be as described above, and let $\EE L(\tau)$ be as defined in (\ref{eq:Ltautimeseries}). Assume that $\EE \left[L_{\tau}\right]_{ii} \neq 0$, $\EE \left[L_{\tau}\right]_{ij} = 0$, and $\EE \sss_i^T \sss_j = 0$. Then, we have that

\noindent
a) For some $\epsilon > 0$ and $r \geq 4$, we have that 
\begin{equation} \label{eqn:flagtau}
f(n) = o\left(\left(\log n\right)^{2 / r} \left(\log \log n\right)^{(1 + \epsilon) 2 / r} n^{-1/2}\right).
\end{equation}
Then, {there exists a constant $c$ such that}
\begin{equation}
\left|\left[L_{\tau}\right]_{ij}\right|= O(f(n)) \textrm{ and } \left|\sss_i^T \sss_j\right| = O(f(n))
\end{equation}
with probability at least
\begin{equation}\label{eqn:lagtautsprob}
1 - c \left(\left[\log n \left(\log \log n\right)^{1 + \epsilon}\right]^{-1}\right).
\end{equation}

\noindent
b) Then we have that 
$|d_i - \EE d_i| \leq f(n) [1+o(1)]$ for $i = 1, \ldots, k$,
with probability (\ref{eqn:lagtautsprob}).

\noindent
c) For
$p_i$ given by (\ref{eq:q_sign}), we have that 
\begin{equation} \label{eq:lagtauloss}
\sum_{i = 1}^k \left\|\widehat{\qq}_i - p_i \qq_i\right\|_2^2 = \! O\left(\left[\dfrac{\EE d_1}{\EE d_k}\right]^2 \cdot \dfrac{k^7}{\delta_{L}^2} \cdot \left[f^2(n) + \tau n^{-2\alpha} \right]\right),
\end{equation}
\textrm{where $\delta_{L}$ is given by}
\begin{equation}
\delta_{L} = \min_{i \neq j} \left|\EE \left[L_{\tau}\right]_{ii} - \left[L_{\tau}\right]_{jj}\right|,
\end{equation}
with probability (\ref{eqn:lagtautsprob}).

\noindent
d) Moreover, for each $\EE L_{ii}(\tau)$, we have that 
\begin{equation}\label{eq:lagtaueig}
\left|\EE \left[L_{\tau}\right]_{ii} - \lambda_i\right|^2 = O\left(\left[\dfrac{\EE d_1}{\EE d_k}\right]^2 \cdot {k^6}\cdot \left[f^2(n) + \tau n^{-2\alpha} \right]\right),
\end{equation}
with probability (\ref{eqn:lagtautsprob}). 

\end{subequations}
\end{theorem}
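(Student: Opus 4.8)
The strategy is to derive Theorem~\ref{thm:lagtautimeseries} from the deterministic result Theorem~\ref{thm:shift} applied \emph{pathwise}, on a high-probability event on which the random data matrix $X$ satisfies the hypotheses of Theorem~\ref{thm:shift} with cross-correlation decay rate $f(n)$ as in (\ref{eqn:flagtau}). In this view, parts (a) and (b) are the probabilistic core and parts (c)--(d) are essentially corollaries: once we know that, with probability at least (\ref{eqn:lagtautsprob}), the off-diagonal lag-$\tau$ sample covariances $[L_\tau]_{ij}$ and the Gram entries $\sss_i^T\sss_j$ ($i\neq j$) are $O(f(n))$ and the column magnitudes $d_i$ concentrate around $\EE d_i$ at the same rate, we substitute the verified $f(n)$, the density exponent $\alpha$, and $d_i=\EE d_i(1+o(1))$ into (\ref{eq:generalboundshift}) and (\ref{eq:evalboundshift}) to obtain (\ref{eq:lagtauloss}) and (\ref{eq:lagtaueig}).

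For part (a), fix $i\neq j$ and write, using $\sss_i=\cc_i/\|\cc_i\|_2$, $[L_\tau]_{ij}=\tfrac{1}{\|\cc_i\|_2\|\cc_j\|_2}\sum_{l=1}^{n} c_{i,l}c_{j,[l+\tau]\bmod n}$. Split the circular sum into a linear part $\sum_{l=1}^{n-\tau}c_{i,l}c_{j,l+\tau}$ and a wrap-around remainder of at most $\tau$ terms; after normalization each remainder summand is $O(n^{-2\alpha})$ by (\ref{eq:S_coherence}), so the remainder contributes exactly the additive $\tau n^{-2\alpha}$ term already present in (\ref{eq:generalboundshift}). The linear part is a partial sum of $Z_l=c_{i,l}c_{j,l+\tau}$, which is stationary and ergodic as a measurable function of the joint source process, has mean zero since $\EE[L_\tau]_{ij}=0$, and has a finite moment of order $r/2\geq 2$ by Cauchy--Schwarz and the finite $r$-th moment of the $c$'s. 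A Rosenthal-type moment inequality for partial sums of such weakly dependent stationary sequences bounds $\EE\big|\sum_{l=1}^{m}Z_l\big|^{r/2}$ by $O(m^{r/4})$ for $r\geq 4$; Markov's inequality then controls the tail at scale $\sqrt m\, a_m$ by $O(a_m^{-r/2})$, and a Borel--Cantelli argument along a geometric subsequence $n_k\asymp 2^k$, with a maximal inequality to fill the gaps, forces $a_m\asymp(\log m)^{2/r}(\log\log m)^{2(1+\epsilon)/r}$ to make the relevant series summable; dividing by $\|\cc_i\|_2\|\cc_j\|_2$ turns $\sqrt m\, a_m$ into the rate (\ref{eqn:flagtau}) and the probability (\ref{eqn:lagtautsprob}). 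The Gram entries $\sss_i^T\sss_j$ are handled identically with lag $\tau=0$, and the role of the range constraint (\ref{eqn:tauts}) is to keep the moment inequality for the lagged product process valid at the stated order, which is where $r$ and $\tau$ become coupled.

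For part (b), the same machinery applied to the mean-zero stationary ergodic sequence $c_{i,l}^2-\EE c_{i,1}^2$ concentrates $\|\cc_i\|_2^2$, hence $\|\cc_i\|_2$, around its expectation (the gap between $\EE\|\cc_i\|_2$ and $(\EE\|\cc_i\|_2^2)^{1/2}$ being a lower-order correction), and multiplying by the deterministic $\|\bb_i\|_2$ gives $|d_i-\EE d_i|\leq f(n)[1+o(1)]$ with probability (\ref{eqn:lagtautsprob}). On this event $S$ also inherits full column rank with bounded conditioning, since its Gram matrix equals $\Ii_k+O(f(n))$, so conditions (2) and (4) of (\ref{eq:assumptions_general}) and the ordering/ratio conditions (\ref{eq:assumptions_arma}) hold pathwise with $d_1/d_k=(\EE d_1/\EE d_k)(1+o(1))$, while conditions (1) and (5) are deterministic.

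Finally, for parts (c) and (d): on the intersection of the events from (a) and (b), Theorem~\ref{thm:shift} applies to $X$ with the rate (\ref{eqn:flagtau}), with $\delta_L=\min_{i\neq j}|[L_\tau]_{ii}-[L_\tau]_{jj}|$ as in (\ref{eq:delta_L_tau}), and with $d_1/d_k$ replaceable by $\EE d_1/\EE d_k$ up to a constant absorbed into the $O(\cdot)$; since part (a) also gives $[L_\tau]_{ii}=\EE[L_\tau]_{ii}+O(f(n))$, for large $n$ the gap $\delta_L$ can be written through the population gaps $\min_{i\neq j}|\EE[L_\tau]_{ii}-[L_\tau]_{jj}|$ as stated, and if two population lag-$\tau$ autocorrelations coincide the bound is vacuous, consistent with the remark following Theorem~\ref{thm:shift}. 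Substituting into (\ref{eq:generalboundshift}) and (\ref{eq:evalboundshift}) yields (\ref{eq:lagtauloss}) and (\ref{eq:lagtaueig}), each with probability at least (\ref{eqn:lagtautsprob}). The main obstacle is the sharp concentration in part (a): obtaining exactly the rate (\ref{eqn:flagtau}) and probability (\ref{eqn:lagtautsprob}) requires balancing the moment order $r$, the lag range (\ref{eqn:tauts}), and the Borel--Cantelli subsequence so the logarithmic corrections come out with the stated exponents; once that is done, the rest is bookkeeping on top of Theorem~\ref{thm:shift}.
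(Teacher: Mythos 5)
Your overall architecture matches the paper's: both reduce Theorem~\ref{thm:lagtautimeseries} to the deterministic Theorem~\ref{thm:shift} by showing that, on a high-probability event, the entries of $L_{\tau}$, the Gram entries $\sss_i^T\sss_j$, and the $d_i$ concentrate around their expectations at rate $f(n)$, after which parts (c) and (d) are bookkeeping on top of (\ref{eq:generalboundshift}) and (\ref{eq:evalboundshift}). Your treatment of the wrap-around term, of part (b), and of the substitution in parts (c)--(d) is essentially what the paper does.

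The gap is in the concentration step of part (a). The paper does not prove this from scratch; it imposes the structural conditions in (\ref{eqn:ts_conditions}) --- each latent series is a linear process $\widetilde{\cc}_t=\sum_{j\ge 0}\kappa_j\beps_{t-j}$ with square-summable coefficients driven by martingale-difference innovations having bounded conditional $r$-th moments --- and then invokes Theorem~2 of \citet{hong1982autocorrelation}, which delivers the uniform-in-$\tau$ iterated-logarithm rate (\ref{eqn:corr_bound}) and the probability bound (\ref{eqn:corr_prob}) in one stroke. Your replacement argument asserts a Rosenthal-type bound $\EE\left|\sum_{l=1}^m Z_l\right|^{r/2}=O\left(m^{r/4}\right)$ for $Z_l=c_{i,l}c_{j,l+\tau}$ merely because $(Z_l)$ is stationary and ergodic with a finite moment of order $r/2$. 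That is false in general: stationarity plus ergodicity gives no rate of convergence at all (even $\EE\left|\sum_{l=1}^m Z_l\right|^{2}$ can grow like $m^2$ when the autocovariances of $Z_l$ do not decay), and Rosenthal-type inequalities for dependent sequences require a quantitative dependence condition --- mixing rates, a martingale approximation, or exactly the linear-process-with-martingale-difference-innovations structure the paper assumes before citing Hong. Without stating and using such a condition, the moment bound, and hence the Markov/Borel--Cantelli cascade built on it, does not go through; with it, you would essentially be re-deriving Hong's theorem. A secondary point: the conclusion must hold uniformly over all lags $\tau$ up to $n^{r/(2(r-2))}$ (this is where $r$ and $\tau$ are genuinely coupled in (\ref{eqn:tauts})), and your subsequence argument would need an explicit maximal inequality over both $m$ and $\tau$ to recover the polylogarithmic failure probability (\ref{eqn:lagtautsprob}); this uniformity is again supplied for free by the cited result.
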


If the $\cc_i$ are samples from a stationary, ergodic ARMA process, we may simplify the results of Theorem \ref{thm:lagtautimeseries} slightly. 
\begin{corollary}[ARMA Processes at Lag $\tau$] \label{thm:armacorrtau}
Assume that the $\cc_i$ are samples from an ARMA process. Then (\ref{eqn:tauts}) may be replaced with
$1 \leq \tau \leq \left[\log n\right]^a$,
for some $a > 0$, and (\ref{eqn:flagtau}) may be replaced with 
$f(n) = o\left(\left(\log \log n / n\right)^{1/2}\right)$.
\end{corollary}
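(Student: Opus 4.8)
The plan is to read the corollary off Theorem~\ref{thm:lagtautimeseries} by exploiting two structural facts available for ARMA sources that a generic stationary, ergodic source need not possess. First, a stationary, causal ARMA process is a linear process $c_{it}=\sum_{j\ge 0}\psi_{ij}\varepsilon_{i,t-j}$ whose coefficients decay geometrically, $|\psi_{ij}|\le C\rho^{j}$ for some $\rho\in(0,1)$, so its autocovariances satisfy $|\gamma_i(h)|\le C'\rho^{|h|}$; the hypothesis $\EE[L_\tau]_{ii}\ne 0$ is then just the (generic) requirement that each source have nonvanishing lag-$\tau$ autocovariance. Second, with innovations having finite moments of every order (e.g.\ Gaussian) the process has finite moments of every order and is strongly mixing with geometrically decaying mixing coefficients. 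Consequently every ``finite $r$-th moment, $r\ge 4$'' hypothesis used inside the proof of Theorem~\ref{thm:lagtautimeseries} is available with $r$ arbitrarily large, the geometric mixing rate dominates whatever polynomial rate the general argument requires, and the coherence condition (\ref{eq:S_coherence}) holds with $\alpha$ bounded away from $0$ (being, up to a $\sqrt{\log n}$ factor, $1/2$ because the columns of $S$ have unit norm).

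Next I would rerun the concentration step of Theorem~\ref{thm:lagtautimeseries}(a), where one must bound $[L_\tau]_{ij}=\sum_{l}S_{i,l}S_{j,[l+\tau]\bmod n}$ and $\sss_i^{T}\sss_j=\sum_{l}S_{i,l}S_{j,l}$ for $i\ne j$. For mutually uncorrelated ARMA sources these sums are mean zero; the geometric decay of the $\psi$'s makes the variance of the unnormalized versions exactly $\Theta(n)$ with an $n$- and $\tau$-independent constant; and the $\bmod n$ wrap-around together with the truncation of the innovation expansions contribute only lower-order terms because $\rho^{\tau}\to 0$ and $\tau=o(n)$. An exponential (Bernstein- or Hanson--Wright-type) concentration inequality for sample cross-covariances of geometrically mixing sequences --- equivalently, the law of the iterated logarithm for such sequences --- then gives $|[L_\tau]_{ij}|,\,|\sss_i^{T}\sss_j|=O\bigl((\log\log n/n)^{1/2}\bigr)$ with probability (\ref{eqn:lagtautsprob}) (indeed almost surely), which is precisely the replacement of (\ref{eqn:flagtau}) by $f(n)=o\bigl((\log\log n/n)^{1/2}\bigr)$. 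Applying the same argument to $\|\cc_i\|_2^{2}$ gives part (b), $|d_i-\EE d_i|\le f(n)[1+o(1)]$, and the standing hypotheses $\EE[L_\tau]_{ij}=0$, $\EE\sss_i^{T}\sss_j=0$ are just mutual uncorrelatedness of distinct sources, carried over unchanged.

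For the range of $\tau$, the estimates above only need $\tau=o(n)$ (so that $n-\tau\approx n$ and the wrap-around is negligible) and the bias term $\tau n^{-2\alpha}$ in (\ref{eq:lagtauloss})--(\ref{eq:lagtaueig}) to be of lower order; taking $1\le\tau\le[\log n]^{a}$ for an arbitrary fixed $a>0$ meets both requirements while covering every lag of practical interest, and it replaces (\ref{eqn:tauts}). Substituting $f(n)=o\bigl((\log\log n/n)^{1/2}\bigr)$ and this $\tau$-range into parts (b), (c), (d) of Theorem~\ref{thm:lagtautimeseries} --- whose conclusions are deterministic, perturbation-theoretic consequences of the inputs just verified --- yields the corollary, with the probability (\ref{eqn:lagtautsprob}) inherited verbatim.

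I expect the one genuinely non-routine step to be the second: establishing the iterated-logarithm rate $(\log\log n/n)^{1/2}$ for the lag-$\tau$ sample cross-covariances of independent ARMA series while (i) keeping the implied constant uniform over $\tau\in[1,[\log n]^{a}]$, (ii) passing from the raw cross-covariance to $[L_\tau]_{ij}$, which divides by $\|\cc_i\|_2\|\cc_j\|_2$ --- this costs only a factor $1+o(1)$, via the same concentration of $\|\cc_i\|_2^{2}/n$ about $\gamma_i(0)$ that underlies part (b) --- and (iii) showing the circular wrap-around in (\ref{eq:Ltau}) is negligible using $\rho^{\tau}\to 0$ and $\tau=o(n)$. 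Everything else is bookkeeping on top of Theorem~\ref{thm:lagtautimeseries}.
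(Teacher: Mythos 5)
Your proposal is correct in structure and follows essentially the same route as the paper: reduce the corollary to Theorem \ref{thm:lagtautimeseries} and replace the generic rate $f(n)$ and lag range with the sharper iterated-logarithm rate and the $\tau = O([\log n]^a)$ window that hold for ARMA sample auto- and cross-correlations. The difference is in how the key step is handled. The paper's entire proof is a citation: Theorem 3 of \citet{hong1982autocorrelation} directly supplies the uniform-over-lags bound $o\left((\log\log n/n)^{1/2}\right)$ for $\tau = O([\log n]^a)$, and the probability bound (\ref{eqn:lagtautsprob}) is reused ``with no further work.'' You instead sketch a from-scratch derivation via geometric decay of the MA coefficients and Bernstein/Hanson--Wright-type concentration for geometrically mixing sequences. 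Be aware that a single exponential tail bound at a fixed sample size yields a $\sqrt{\log n / n}$-type high-probability rate, not the $\sqrt{\log\log n / n}$ law-of-the-iterated-logarithm rate; obtaining the latter (uniformly over the lag window, and almost surely) requires the blocking and Borel--Cantelli machinery of the classical LIL for sample autocovariances --- which is precisely what the cited result packages. So your identification of the ``one genuinely non-routine step'' is exactly right, but the proposed tool for it is weaker than what is needed; the paper sidesteps this by citation. The remaining bookkeeping --- the $\tau$-range, the $1+o(1)$ cost of normalizing by $\|\cc_i\|_2$, the negligibility of the circular wrap-around, and inheriting parts (b)--(d) of Theorem \ref{thm:lagtautimeseries} verbatim --- matches the paper's treatment.
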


The iterated logarithmic rate in our error bounds and accompanying probability, are consequences of the classical time series results in \citet{hong1982autocorrelation}. {Here, we have stated a result that is similar in spirit to that for SOBI, given in \citet{miettinen2016separation}. Our result says that time series $\sss_i$ that are uncorrelated at lags $1$ and $0$ can be unmixed, provided that they are not sparse. The result for SOBI requires uncorrelatedness at all integral lags, and states an asymptotic distributional result; our result relies on looser assumptions, and is a finite sample guarantee. It should be noted that at the expense of using a single lag, our result is slightly weaker than the $1/\sqrt{n}$ convergence described in \citet[Theorem~1]{miettinen2016separation}.}

\section{Estimating the temporal behavior: \texorpdfstring{$S$}{S}}\label{sec:Sthm}

We now establish a recovery condition for deterministic $\sss_i$. 

\begin{theorem}[Extending the bounds to $S$] \label{thm:S_bound}
Assume that the conditions of Theorem \ref{thm:shift} hold for a lag $\tau$ with a bound $\epsilon_{d, v}^2$ for the squared estimation error of the $\qq_j$. Moreover, assume that
$k d_1^2 \epsilon_{d, v}^2 < d_k^2$. 
Then, given an estimate of the top $k$ left eigenvectors of $\widehat{A}$, denoted by the rows of the matrix $\widehat{Q^{+}}$, let $\widehat{S}$ be formed by normalizing the columns of $\left(\widehat{Q^{+}} X\right)^T$. The columns of $\widehat{S}$ are denoted by $\widehat{\sss}_i$, and let 
$p_i = \textrm{sign}\left(\sss_i^T \widehat{\sss}_i\right)$. 
Then, we have that 
\begin{equation}\label{eq:S_err}
\sum_{i = 1}^k \left\|\widehat{\sss}_i - p_i \sss_i\right\|_2^2 = O\left(k \left[\frac{d_1}{d_k}\right]^2 \epsilon_{d, v}^2\right).
\end{equation}
\end{theorem}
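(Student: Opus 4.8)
The plan is to treat Theorem~\ref{thm:S_bound} as a two-stage perturbation argument, the point being that with the \emph{exact} $Q^{+}$ the recovery is noiseless: since $Q$ has full column rank, $Q^{+}Q = I_k$, so $Q^{+}X = Q^{+}QDS^{T} = DS^{T}$, and normalizing the columns of $(Q^{+}X)^{T} = SD$ returns $S$ exactly ($D$ is diagonal with positive entries and the columns of $S$ already have unit norm). All of the error in $\widehat{S}$ therefore comes from $\widehat{Q^{+}}$ differing from $Q^{+}$, which in turn is controlled by the bound $\epsilon_{d,v}^2$ on the $\widehat{\qq}_i$ from Theorem~\ref{thm:shift}.

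First I would record that hypothesis in matrix form: with $P = \mathrm{diag}(p_1,\dots,p_k)$ and $E := \widehat{Q} - QP$ we have $\|E\|_F^2 = \sum_i\|\widehat{\qq}_i - p_i\qq_i\|_2^2 \le \epsilon_{d,v}^2$. Using only the normalization $\widehat{Q^{+}}\widehat{Q} = I_k$ built into the eigendecomposition, together with $Q = (\widehat{Q}-E)P$ (as $P^{-1}=P$), I get
\begin{equation}
\widehat{Q^{+}}Q = \widehat{Q^{+}}\widehat{Q}P - \widehat{Q^{+}}EP = P - \widehat{Q^{+}}EP =: P + \Delta,
\end{equation}
so $\|\Delta\|_F \le \|\widehat{Q^{+}}\|_{\mathrm{op}}\,\|E\|_F$. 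A standard pseudoinverse perturbation bound gives $\sigma_k(\widehat{Q}) \ge \sigma_k(Q) - \|E\|_{\mathrm{op}}$, and since $\sigma_k(Q)$ is bounded away from $0$ (the well-conditioning assumption on $Q$ in~(\ref{eq:assumptions_general})) and $\|E\|_{\mathrm{op}} \le \epsilon_{d,v}$ is small, $\|\widehat{Q^{+}}\|_{\mathrm{op}} = O(1)$ and hence $\|\Delta\|_F = O(\epsilon_{d,v})$, the hidden constant absorbing $\sigma_1(Q)/\sigma_k(Q)$.

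Second I would push this through the normalization. From $X = QDS^{T}$,
\begin{equation}
\bigl(\widehat{Q^{+}}X\bigr)^{T} = \bigl((P+\Delta)DS^{T}\bigr)^{T} = SDP + SD\Delta^{T},
\end{equation}
whose $i$-th column is $p_i d_i\sss_i + \mathbf{r}_i$ with $\mathbf{r}_i = SD(\Delta_{i,:})^{T}$, so that $\sum_i\|\mathbf{r}_i\|_2^2 \le \|S\|_{\mathrm{op}}^2\,\|D\|_{\mathrm{op}}^2\,\|\Delta\|_F^2 \le k\,d_1^2\,\|\Delta\|_F^2 = O(k\,d_1^2\,\epsilon_{d,v}^2)$, using $\|S\|_{\mathrm{op}}\le\|S\|_F=\sqrt{k}$. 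The hypothesis $k\,d_1^2\,\epsilon_{d,v}^2 < d_k^2$ makes each $\|\mathbf{r}_i\|_2$ dominated by $d_i = \|p_i d_i\sss_i\|_2$, so the elementary bound $\bigl\|\tfrac{\vv}{\|\vv\|_2} - \tfrac{\uu}{\|\uu\|_2}\bigr\|_2 \le \tfrac{2\|\vv-\uu\|_2}{\|\vv\|_2}$ (with $\uu = p_i d_i\sss_i$) gives $\|\widehat{\sss}_i - p_i\sss_i\|_2 = O(\|\mathbf{r}_i\|_2/d_k)$; summing over $i$ yields $\sum_i\|\widehat{\sss}_i - p_i\sss_i\|_2^2 = O\bigl(k\,(d_1/d_k)^2\,\epsilon_{d,v}^2\bigr)$, which is~(\ref{eq:S_err}).

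The main obstacle is the control of $\widehat{Q^{+}}$ in the middle step: one must argue that the left-eigenvector matrix returned by the decomposition stays a bounded-norm left inverse of $\widehat{Q}$ under perturbation --- equivalently, that the eigenvectors of $\widehat{A}$ do not become ill-conditioned --- which is exactly where the conditioning assumption on $Q$ and the smallness hypothesis $k\,d_1^2\epsilon_{d,v}^2<d_k^2$ enter. A minor technicality is that for finite $n$ the eigenvectors of $\widehat{A}$ may be complex; this is handled by using conjugate transposes throughout (or taking real parts at the end), the relevant quantities being asymptotically real since $L_\tau$ is asymptotically diagonal with real diagonal entries.
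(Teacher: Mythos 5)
Your argument is correct and reaches the stated bound, but it obtains the crucial intermediate quantity --- the error in $\widehat{Q^{+}}$ --- by a genuinely different route than the paper. The paper's proof transposes the perturbation identity $\widehat{A} = Q\Lambda Q^{+} + \widehat{\Delta}_A$, observes that $\widehat{\Delta}_A^T$ has the same norm as $\widehat{\Delta}_A$, and re-runs the eigenvector perturbation argument on $\widehat{A}^T$ to conclude that the \emph{left} eigenvectors are estimated with the same error bound $\epsilon_{d,v}^2$ as the right ones; it then writes $\left(\widehat{Q^{+}}X\right)^T = SD + X^T\Delta_{Q^+}^T$ and bounds $\|X^T\Delta_{Q^+}^T\|_F^2 \leq k d_1^2 \epsilon_{d,v}^2$. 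You instead never touch the left-eigenvector perturbation problem: you use the algebraic relation $\widehat{Q^{+}}\widehat{Q} = I_k$ together with Weyl's inequality $\sigma_k(\widehat{Q}) \geq \sigma_k(Q) - \|E\|_{\mathrm{op}}$ and the conditioning assumption on $Q$ to transfer the right-eigenvector bound $\|E\|_F \leq \epsilon_{d,v}$ into $\|\widehat{Q^{+}}Q - P\|_F = O(\epsilon_{d,v})$. Your route has the advantage of deriving everything from the literally stated hypothesis (the bound on $\sum_i\|\widehat{\qq}_i - p_i\qq_i\|_2^2$) rather than the additional implicit claim that the left eigenvectors enjoy the same bound, at the cost of committing to the interpretation of $\widehat{Q^{+}}$ as the Moore--Penrose pseudoinverse of $\widehat{Q}$ rather than an independently computed set of left eigenvectors; since any row rescaling of $\widehat{Q^{+}}$ is washed out by the final column normalization of $\left(\widehat{Q^{+}}X\right)^T$, the two interpretations yield the same $\widehat{S}$, so nothing is lost. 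The second half of your argument --- propagating the perturbation through $X = QDS^T$, bounding the residual columns by $k d_1^2\|\Delta\|_F^2$, and invoking an elementary normalization lemma under the smallness hypothesis $k d_1^2\epsilon_{d,v}^2 < d_k^2$ --- mirrors the paper's proof step for step.
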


This result translates the results for the mixing matrix $Q$ to the estimation of the signals $S$. For the practitioner intending to estimate the latent \emph{signals} instead of the mixing matrix, this final result has a greater utility.  

\subsection{Applications of Theorem \ref{thm:S_bound}: Cosines}

As we did for Theorem \ref{thm:shift}, we may restate Theorem \ref{thm:S_bound} for the cosine model.

\begin{corollary}[Cosines] \label{thm:cos_S}
Assume that the $\cc_i$ are given by (\ref{eqn:cos}), that we apply DMD with $\tau = 1$, and that $p_i = \textrm{sign}\left(\sss_i^T \widehat{\sss}_i\right)$. Then we have that 
\begin{equation} \label{eq:cos_bound_S}
\sum_{i = 1}^k \left\|\widehat{\sss}_i - p_i \sss_i\right\|_2^2 = O\left(\left[\dfrac{d_1}{d_k}\right]^4 \cdot \dfrac{k^8}{\delta_{L}^4} \cdot \dfrac{1}{n}\right),
\end{equation}
where
$\delta_{L} = \min_{i \neq j} \left|\cos \omega_i - \cos \omega_j\right|$.
\end{corollary}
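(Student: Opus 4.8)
The plan is to derive Corollary \ref{thm:cos_S} directly from Theorem \ref{thm:S_bound} by substituting into it the eigenvector error bound that Corollary \ref{thm:cos} already furnishes for the cosine model, after checking that the admissibility hypothesis $k d_1^2 \epsilon_{d,v}^2 < d_k^2$ of Theorem \ref{thm:S_bound} is met in the relevant (large-$n$) regime. No new analytic work is needed beyond this substitution and the accompanying bookkeeping of the factors $k$, $d_1/d_k$, and $\delta_{L}$.

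First I would note that, exactly as in the proof of Corollary \ref{thm:cos}, the model $c_{it} = \cos(\omega_i t + \phi_i)$ with distinct frequencies places us in the setting of Theorem \ref{thm:shift} at lag $\tau = 1$ with $\alpha = 1/2$ and $f(n) = 1/\sqrt{n}$, and that $\delta_{L} = \min_{i\neq j}|\cos\omega_i - \cos\omega_j|$ is a fixed, strictly positive constant because the frequencies are fixed and distinct. Corollary \ref{thm:cos} then supplies both the eigenvector bound $\sum_{i}\|\widehat{\qq}_i - p_i\qq_i\|_2^2 = O([d_1/d_k]^2 k^7 \delta_{L}^{-4} n^{-1})$ and the (smaller, by a factor $k/\delta_{L}^4$) eigenvalue bound $|\cos\omega_i - \lambda_i|^2 = O([d_1/d_k]^2 k^6 n^{-1})$; the eigenvector term dominates, so I would take
\[
\epsilon_{d,v}^2 = O\!\left(\left[\frac{d_1}{d_k}\right]^2 \frac{k^7}{\delta_{L}^4}\cdot\frac{1}{n}\right)
\]
as the bound appearing in the hypothesis of Theorem \ref{thm:S_bound}.

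Next I would verify $k d_1^2 \epsilon_{d,v}^2 < d_k^2$: dividing through, $k (d_1/d_k)^2 \epsilon_{d,v}^2 = O(k^8 [d_1/d_k]^4 \delta_{L}^{-4} n^{-1})$, and since $k$ is fixed, $d_1/d_k$ is bounded above by condition (\ref{eq:d_ratio}), and $\delta_{L}$ is a fixed positive constant, this quantity tends to $0$ as $n\to\infty$ and is therefore eventually smaller than $1$ (equivalently, $k d_1^2 \epsilon_{d,v}^2$ is eventually smaller than $d_k^2$). Hence for all sufficiently large $n$ Theorem \ref{thm:S_bound} applies and gives
\[
\sum_{i=1}^k \|\widehat{\sss}_i - p_i\sss_i\|_2^2 = O\!\left(k\left[\frac{d_1}{d_k}\right]^2 \epsilon_{d,v}^2\right) = O\!\left(\left[\frac{d_1}{d_k}\right]^4 \frac{k^8}{\delta_{L}^4}\cdot\frac{1}{n}\right),
\]
which is precisely (\ref{eq:cos_bound_S}); the sign choices $p_i = \textrm{sign}(\sss_i^T\widehat{\sss}_i)$ are inherited verbatim from Theorem \ref{thm:S_bound}.

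Since the argument is a composition of two already-proven results, I do not expect a substantial obstacle. The one point that deserves attention is that Theorem \ref{thm:S_bound} internally uses the estimated \emph{left} eigenvectors (the rows of $\widehat{Q^{+}}$) to form $\widehat{S}$, whereas Corollary \ref{thm:cos} is phrased in terms of the right eigenvectors $\widehat{\qq}_i$; this causes no difficulty here because the hypothesis of Theorem \ref{thm:S_bound} is already stated purely as a bound $\epsilon_{d,v}^2$ on the $\qq_j$ error, so the passage to the left-eigenvector quantities has been dealt with once and for all inside that theorem's proof. The remaining ``hard part'' is therefore merely the careful multiplication of the $O(\cdot)$ factors — checking that the $\delta_{L}^{-4}$ in the final bound comes entirely from $\epsilon_{d,v}^2$, and that the extra $k$ from Theorem \ref{thm:S_bound} combines with the $k^7$ from Corollary \ref{thm:cos} to produce $k^8$.
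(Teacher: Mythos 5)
Your proposal is correct and follows exactly the route the paper intends: the paper gives no separate proof of Corollary \ref{thm:cos_S}, treating it as the immediate substitution of the eigenvector bound $\epsilon_{d,v}^2 = O([d_1/d_k]^2 k^7 \delta_L^{-4} n^{-1})$ from Corollary \ref{thm:cos} into the bound $O(k[d_1/d_k]^2\epsilon_{d,v}^2)$ of Theorem \ref{thm:S_bound}, which yields $O([d_1/d_k]^4 k^8 \delta_L^{-4} n^{-1})$. Your additional check that the hypothesis $k d_1^2 \epsilon_{d,v}^2 < d_k^2$ holds for all sufficiently large $n$ is a detail the paper leaves implicit, and your bookkeeping of the $k$, $d_1/d_k$, and $\delta_L$ factors is accurate.
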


\section{Missing Data Analysis} \label{sec:missing}

We now consider the randomly missing data setting. We assume that the data is modeled as  
\begin{equation}
\widetilde{X} = X \odot M = \left(Q D S^T\right) \odot M,
\end{equation}
where $M$ is a masking matrix, whose entries are drawn uniformly at random: 
\begin{equation}
M_{i, j} = \left\{ 
\begin{array}{ll} 
1 & \text{ with probability } q,\\ 
0 & \text{ with probability } 1 - q.
\end{array}\right.
\end{equation}
The notation $\odot$ represents the Hadamard or element-wise matrix product. {Essentially, we replace unknown entries with zeros, as is done in the compressed sensing literature \cite{candes2011probabilistic, ravikumar2010high, nadakuditi2014optshrink}.}

\subsection{The tSVD-DMD algorithm}
A natural, and perhaps the simplest, choice to `fill-in' the missing entries in $\widetilde{X}$ is to use a low-rank approximation, also known as a truncated SVD \citep{davenport2016overview, eckart1936approximation}. That is, given $\widetilde{X}$, we compute the SVD 
$\widetilde{X} = \widehat{U} \widehat{\Sigma} \widehat{V}^T$,
and then the rank-$k$ truncation 
\begin{equation}
\widehat{X}_k = \sum_{i = 1}^k \widehat{\sigma}_i \widehat{\uu}_i \widehat{\vv}_k^T,
\end{equation}
where the columns of $\widehat{U}$ and $\widehat{V}$ are the $\widehat{\uu}_i$ and $\widehat{\vv}_i$, respectively, and the $\widehat{\sigma}_i$ are the non-zero entries of $\widehat{\Sigma}$. In what follows, $\uu_i$, $\vv_i$, and $\sigma_i$ will denote the singular vectors and values of $X$. We assume that the number of sources $k$ is known apriori.

After `filling-in' the missing entries of $\widetilde{X}$ and computing $\widehat{X}_k$, we may apply the $\tau$-DMD algorithm to $\widehat{X}_k$. If $\widehat{X}_k$ has columns 
$\widehat{X}_k = \begin{bmatrix} \widehat{\xx}_1 & \widehat{\xx}_2 & \cdots &  \widehat{\xx}_n\end{bmatrix}$,
we may define 
\begin{equation}\label{eq:X0X1hattau}
\widehat{X}_{(0)}^{\tau} = \begin{bmatrix} \widehat{\xx}_1 & \widehat{\xx}_2 & \cdots & \widehat{\xx}_{n - \tau}\end{bmatrix}
\textrm{ and }
\widehat{X}_{(1)}^{\tau} = \begin{bmatrix} \widehat{\xx}_{1 + \tau} & \widehat{\xx}_{2 + \tau} & \cdots & \widehat{\xx}_{n}\end{bmatrix}.
\end{equation}
We have dropped the $k$-dependence for clarity. Then, we may define 
\begin{equation}\label{eq:Ahattaumissing}
\widetilde{A}_{\tau}= \widehat{X}_{(1)}^{\tau}  \left(\widehat{X}_{(0)}^{\tau} \right)^{+},
\end{equation}
and take an eigenvalue decomposition:
\begin{equation}\label{eq:eig dmd tau missing}
\widetilde{A}_{\tau} = \widehat{Q} \widehat{\Lambda} \widehat{Q}^{+}.
\end{equation}
For the sake of naming consistency, we will refer to this procedure as the tSVD-DMD algorithm. 

\subsection{Assumptions}
We now provide a DMD recovery performance guarantee.  Before stating the result, we require some definitions and further conditions. In addition to the previous assumptions about $S$, the $d_i$, the relative values of $k$, $n$, $p$, and $\tau$, and the linear independence of the $\qq_i$, we require the following conditions that augment (\ref{eq:assumptions_general}). For clarity and conciseness in what follows, we define the constant
\begin{subequations} \label{eq:missing_defs}
\begin{equation} 
\gamma = \frac{n^{2 \alpha} p^{2 \beta}}{d_1^2 k^2},
\end{equation}
and the quantities
\begin{equation}
\begin{split}
g(n, p, k, q) = O\biggl(\sqrt[4]{q (1 - q)} d_1 k \times \max\left\{n^{1/4 - \alpha } p^{1/4 - \beta}, n^{-\alpha}, p^{-\beta}\right\}\biggr),
\end{split}
\end{equation}
\begin{equation}
\delta_{\sigma, q} = \min_{i = 1, 2, \ldots, k - 1}\left\{q \sigma_k, q^2 \sigma_k^2, q^2 \sigma_{i} (\sigma_{i} - \sigma_{i + 1}), q \left(\sigma_i - \sigma_{i + 1}\right)\right\},
\end{equation}
and
\begin{equation}
\delta_{\sigma} = \min_{i = 1, 2, \ldots, k - 1}\left\{\sigma_k, \sigma_k^2, \sigma_{i} (\sigma_{i} - \sigma_{i + 1}), \sigma_i - \sigma_{i + 1}\right\}.
\end{equation}
\end{subequations}
{The quantity $g(n, p, k, q)$ comes from bounding the size of $\left(\widetilde{X} - \EE \widetilde{X}\right)$, motivated by the approach taken in \citet{nadakuditi2014optshrink} for handling missing data. The quantities $\delta_{\sigma}$ and $\delta_{\sigma, q}$ come from applications of the results in \citet[Corollary~20, Theorem~23]{o2018random}. The details of how these quantities arise and are used are deferred to the proof of Theorem \ref{thm:missing}, given in Appendix \ref{sec:missing_proof}. }

Then, we require:
\begin{subequations} \label{eq:assumptions_missing}
\begin{enumerate}
\item
Assume that there is a $\beta > 0$ such that
\begin{equation} \label{eq:Q_coherence}
\max_{1 \leq i \leq p, 1 \leq j \leq k} |Q_{i, j}| = O\left(p^{-\beta}\right).
\end{equation}
I.e., the $\qq_i$ are not too sparse; this condition is exactly analogous to that for the $\sss_i$, where we used the parameter $\alpha$. 

\item
Assume that as $p$ and $n$ grow,
\begin{equation}
\frac{1}{\delta_{\sigma, q}}, \frac{q \sigma_{1}}{\delta_{\sigma, q}}, \frac{1}{\gamma \delta_{\sigma, q}} \nrightarrow \infty.
\end{equation}

\item
Assume that 
\begin{equation} \label{eq:missing_pn1}
\lim_{p, n \rightarrow \infty} d_1 \cdot \max\left\{n^{1/4 - \alpha } p^{1/4 - \beta}, n^{-\alpha}, p^{-\beta}\right\} = 0,
\end{equation}
but that 
\begin{equation}\label{eq:missing_pn2}
\lim_{p, n \rightarrow \infty} g(n, p, k, q)^2 \gamma \neq 0.
\end{equation}
\end{enumerate}
\end{subequations}

Condition (\ref{eq:Q_coherence}), along with the analogous condition for the $\sss_i$ given in (\ref{eq:S_coherence}), corresponds to the low coherence condition in the matrix completion literature \citep[Section 5.2]{davenport2016overview}. I.e., we require that the data matrix is sufficiently dense. Moreover, (\ref{eq:missing_pn1}) and (\ref{eq:missing_pn2}) imply that the $\sss_i$ and $\qq_i$ have values of $\alpha$ and $\beta$ that are at least $1/4$ {(and less than $1/2$, by definition). For example, if we generate a matrix $Q$ by uniformly drawing $k$ vectors from the sphere in $\RR^p$ and setting these as the columns, and let $S$ be comprised of cosines as in (\ref{eqn:cos}), we would anticipate that $\alpha = \beta = 1/2$. In this case, if $d_1$ is not increasing, we would have that $g(n, p, k, q) = O\left(\sqrt{q} k / \sqrt[4]{p n}\right)$. }

Given these assumptions, if we apply the tSVD-DMD algorithm to $\widetilde{X}$, we have the following result for the estimation of the eigenvectors $\qq_j$ and eigenvalues $\lambda_i$. 

\subsection{Main result}
\begin{theorem}[Missing Data Recovery Guarantee] \label{thm:missing}
Let the assumptions of Theorem \ref{thm:lagtautimeseries} hold, with a bound $\epsilon_{d, v}^2$ for the squared estimation error of the $\qq_i$ and a bound $\epsilon_{d, e}^2$ for the squared error for the individual eigenvalues. Let the conditions in (\ref{eq:assumptions_missing}) hold, let $a > 1$, {and let $c_0, c_1, c_2 > 0$ be some universal constants.} 
 
\noindent
a) Then, if $L_{\tau}$ is defined in (\ref{eq:Ltau}), $\delta_L$ is defined in (\ref{eq:delta_L_tau}), and $p_i$ is defined in (\ref{eq:q_sign}), 
\begin{equation}\label{eq:missingloss}
\begin{split}
\sum_{i = 1}^k \left\|\widehat{\qq}_i - p_i \qq_i\right\|_2^2 = O\left(\frac{\tau}{q^2} a^2 \left(g(n, p, k, q) \right)^2\frac{\sigma_1^2}{\delta_{\sigma}^2} \frac{k^8}{\delta_{L}^2} + \epsilon_{d, v}^2 \right),
\end{split}
\end{equation}
with probability at least
\begin{equation} \label{eq:missingprob}
\begin{split}
1 - c_1 \left(k^2 \cdot 81^k \exp\left(-\left(1 - \frac{1}{a}\right)^2 c_0 \gamma \frac{\tau \left(g(n, p, k, q)\right)^2}{16}\right)\right)
- c_2\left(k^2 \cdot 9^k \exp\left(-c_0 \gamma \frac{\delta_{\sigma, q}}{64}\right)\right).
\end{split}
\end{equation}

\noindent
b) For each $[L_{\tau}]_{ii}$, we have that 
\begin{equation} \label{eq:eigenvalue_missing}
\begin{split}
\left|[L_{\tau}]_{ii} - \lambda_i\right|^2 = O\left(\frac{\tau}{q^2} a^2 \left(g(n, p, k, q) \right)^2\frac{\sigma_1^2}{\delta_{\sigma}^2} {k^7} + \epsilon_{d, e}^2 \right),
\end{split}
\end{equation}
with probability at least (\ref{eq:missingprob}). 

\end{theorem}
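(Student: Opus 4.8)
The strategy is to reduce the missing-data problem to the fully observed case already resolved by Theorem \ref{thm:lagtautimeseries}, treating the tSVD fill-in step as a matrix perturbation. The structural observation driving the reduction is that $\tau$-DMD is invariant under a global rescaling of the data: $\bigl(q X_{(1)}^{\tau}\bigr)\bigl(q X_{(0)}^{\tau}\bigr)^{+} = X_{(1)}^{\tau}\bigl(X_{(0)}^{\tau}\bigr)^{+} = \widehat{A}_{\tau}$, so the ``ideal'' operator built from the noiseless mean $\EE\widetilde{X} = qX$ is exactly the full-data operator $\widehat{A}_{\tau}$, whose eigenvectors and eigenvalues approximate the $\qq_i$ and the $[L_{\tau}]_{ii}$ to within $\epsilon_{d,v}^{2}$ and $\epsilon_{d,e}^{2}$ by Theorem \ref{thm:lagtautimeseries}. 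It therefore suffices to bound the \emph{additional} error caused by using $\widehat{X}_k$ (equivalently $\widehat{X}_k/q$) in place of $qX$ (equivalently $X$), and then to add the two contributions; this is what produces the additive form of (\ref{eq:missingloss}) and (\ref{eq:eigenvalue_missing}).

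\emph{Step 1: size of the perturbation.} Let $E := \widetilde{X} - qX$, whose $(i,j)$ entry is $X_{ij}(M_{ij}-q)$: the entries are independent, mean zero, bounded by $\max_{i,j}|X_{ij}|$, and of variance $q(1-q)X_{ij}^{2}$. Using $|X_{ij}| \le \sum_{l} |Q_{il}|\,d_l\,|S_{jl}| = O\bigl(k\,d_1\,n^{-\alpha}p^{-\beta}\bigr)$ from the coherence assumptions (\ref{eq:S_coherence}) and (\ref{eq:Q_coherence}), a matrix-concentration argument in the spirit of \citet{nadakuditi2014optshrink} gives $\|E\|_{2} = O\bigl(g(n,p,k,q)\bigr)$ on an event whose probability is the one in (\ref{eq:missingprob}) (the $81^{k}$/$9^{k}$ and the two $\exp(-c_{0}\gamma(\cdots))$ factors come from $\epsilon$-nets over rank-$k$ subspaces and from the perturbation theorems invoked below; assumption (\ref{eq:missing_pn2}) makes the exponents diverge, and (\ref{eq:missing_pn1}) makes the bound $o(1)$). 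Since $\widehat{X}_k$ is a best rank-$k$ approximation of $\widetilde{X}$ while $qX$ has rank $k$, we get $\|\widehat{X}_k-\widetilde{X}\|_{2}\le\|qX-\widetilde{X}\|_{2}$ and hence $\|\Delta\|_{2}\le 2\|E\|_{2}=O(g)$ for $\Delta := \widehat{X}_k - qX$. For the finer facts needed downstream --- that the column space of $\widehat{X}_k$, where the estimated DMD modes live, is close to $\mathrm{col}(Q)$, and that the truncation is numerically stable --- I would invoke the singular-subspace and singular-value perturbation bounds of \citet[Corollary~20, Theorem~23]{o2018random}; this is precisely where the gap quantities $\delta_{\sigma}$ and $\delta_{\sigma,q}$ (assembled from $q\sigma_k$, $q^{2}\sigma_k^{2}$, and the consecutive gaps $\sigma_i-\sigma_{i+1}$) enter, and the spectral-gap assumptions in (\ref{eq:assumptions_missing}) make these bounds useful.

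\emph{Step 2: propagation through $\tau$-DMD.} Write $\widehat{X}_{(0)}^{\tau}=qX_{(0)}^{\tau}+\Delta_{(0)}$ and $\widehat{X}_{(1)}^{\tau}=qX_{(1)}^{\tau}+\Delta_{(1)}$ with $\|\Delta_{(0)}\|_{2},\|\Delta_{(1)}\|_{2}\le\|\Delta\|_{2}$. Because Step 1 guarantees the numerical rank stays equal to $k$, with $\sigma_k(qX_{(0)}^{\tau})$ bounded away from $\|\Delta_{(0)}\|_{2}$, the first-order expansion of the Moore--Penrose pseudoinverse applies and yields $\|\widetilde{A}_{\tau}-\widehat{A}_{\tau}\|_{2}=O\!\bigl(\tfrac{\sqrt{\tau}}{q}\,a\,g(n,p,k,q)\,\tfrac{\sigma_1}{\delta_{\sigma}}\bigr)$, with the $\sqrt{\tau}$ and the $1/q$ tracking accumulation over the $\tau$-shifted blocks and the rescaling of the pseudoinverse, and the free parameter $a>1$ coming from the concentration inequality (it trades error growth $\propto a^{2}$ against a failure probability $\propto\exp(-(1-1/a)^{2}(\cdots))$). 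Feeding this operator-norm perturbation into the eigenvector/eigenvalue sensitivity machinery used to prove Theorem \ref{thm:shift} --- express $\widehat{A}_{\tau}$ and $\widetilde{A}_{\tau}$ as perturbations of $QL_{\tau}Q^{+}$, bound eigenvalue displacement by a Bauer--Fike estimate and eigenvector displacement via the diagonalizability argument, picking up the factors $k^{4}/\delta_{L}$ (resp.\ $k^{7/2}$ for eigenvalues), the ratio $(\EE d_1/\EE d_k)^{2}$ absorbed into $\sigma_1^{2}/\delta_{\sigma}^{2}$, and the eigenvector conditioning $\sigma_1(Q)/\sigma_k(Q)$, with one extra power of $k$ relative to Theorem \ref{thm:shift} from the rank-$k$ Frobenius-to-operator conversion --- squares up to the first summands in (\ref{eq:missingloss}) and (\ref{eq:eigenvalue_missing}); adding the full-data errors $\epsilon_{d,v}^{2}$ and $\epsilon_{d,e}^{2}$ from Theorem \ref{thm:lagtautimeseries} completes the proof.

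\emph{Main obstacle.} The delicate point is the interlocking of the two perturbation steps: the tSVD fill-in error is small only when the spectral gaps $\delta_{\sigma},\delta_{\sigma,q}$ are not too small, whereas the pseudoinverse and eigenvector perturbations blow up as the matrix approaches rank deficiency, so one must verify --- with explicit, non-asymptotic constants, not merely asymptotically --- that on the single high-probability event all the ``rank is exactly $k$ and stable'' hypotheses of \citet{o2018random} and of the pseudoinverse expansion hold simultaneously, and that the $\tau$- and $q$-dependence is tracked uniformly across the $n-\tau$ shifted columns rather than re-losing a factor at each block. Lining up all of these constants, rather than any individual estimate, is the real work.
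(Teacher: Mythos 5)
Your proposal follows essentially the same route as the paper's proof: decompose $\widetilde{X}=qX+(\widetilde{X}-\EE\widetilde{X})$, control the fluctuation term by matrix concentration (the paper uses Lata\l{}a's bound plus Talagrand's inequality, following \citet{nadakuditi2014optshrink}), invoke \citet[Corollary~20, Theorem~23]{o2018random} for the truncated-SVD perturbation, propagate the resulting $\Delta_A$ through the pseudoinverse (handling the submatrices via Cauchy interlacing and Weyl, which is where your ``tracked uniformly across the shifted columns'' concern is resolved), feed it into the eigenpair perturbation machinery of Theorem~\ref{thm:shift}, and finish with the triangle inequality against $\epsilon_{d,v}^2$ and $\epsilon_{d,e}^2$. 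The only cosmetic difference is that you bound $\|\widehat{X}_k-qX\|_2$ directly via Eckart--Young optimality, whereas the paper reconstructs $\widehat{X}_k$ from its perturbed SVD factors $(U+\Delta_U)(\Sigma+\Delta_{\Sigma,q})(V+\Delta_V)^T$ and bounds each factor; both yield the same conclusion.
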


{Note that Theorem \ref{thm:missing} indicates that the dependence of the squared estimation error on $q$ is $O(q^{-3/2})$ for $q$ close to $0$. Moreover, for data such that $d_1$, $\sigma_1$, $\delta_{\sigma}$ and $\delta_{L}$ are not changing with $n$; $Q$ has dense, linearly independent columns; and such that $k$ and $p$ are fixed, the right-hand sides of (\ref{eq:missingloss}) and (\ref{eq:eigenvalue_missing}) behave like
$O\left(q^{-3/2} n^{1/2 - 2 \alpha}\right)$
with probability at least 
$1 - c_1 \left(\exp\left(-c_3 \sqrt{n}\right)\right) - c_2 \left(\exp\left(-c_4 n q\right)\right),$
for some constants $c_1, c_2, c_3, c_4$. Indeed, if the $\cc_i$ are cosines, given by (\ref{eqn:cos}), we have that $\alpha = 1/2$, so that we have a rate of $O\left(q^{-3/2} n^{-1/2}\right)$.}

\section{Numerical simulations}\label{sec:numerical simulations}

In this section, we provide a numerical verification of the theorems we have presented. That is, we generate data, compute the quantities described in the theorems, and observe that these quantities satisfy the bounds presented in the theorems. We recall that one of the contributions of this work and the intention of this work is to demonstrate that DMD is a source separation algorithm in disguise. Our goals are not to compete with the state-of-the art in source separation, rather, this work seeks to provide a new analysis and understanding of the DMD algorithm.

There are two main objects of interest: the error in estimating the eigenvectors $\qq_i$, and the error in estimating the eigenvalues $\lambda_i$. In the deterministic, fully observed setting, the error in estimating $\sss_i$ is also of interest. In what follows, unless otherwise noted, we fix $p = 100$ and $k = 2$, and vary $n$. We fix the mode magnitudes at $d_1 = d_2 = 1$. We also generate dense, non-orthogonal $\qq_i$ by sampling from the sphere in $\RR^p$. Equivalently, we sample from the multivariate normal distribution $\Nn\left(\bzr_p, \Ii_p\right)$ and normalize the resulting vector to have unit $\ell_2$ norm. 

We first verify the deterministic error bounds for the cosine model with the DMD algorithm: i.e., Theorem \ref{thm:shift} and Corollary \ref{thm:cos}, as well as Theorem \ref{thm:S_bound} and Corollary \ref{thm:cos_S}. These verifications are presented in Figure \ref{fig:thmgeneral_cos}. We let the columns of $C$ be equal to $\cc_{i, t} = \cos \left(\omega_i t\right)$. We consider two sets of frequencies: $\omega_1 = 0.25$ and $\omega_2 = 0.5$, as well as $\omega_1 = 0.25$ and $\omega_2 = 2$. We see that as expected, the squared estimation errors for the eigenvalues $\lambda_i$, eigenvectors $\qq_i$, and the $\sss_i$ are bounded by $O(1/n)$. Moreover, the role of $\delta_{L}$ (defined in (\ref{eq:delta_cos})) is visible, as $\omega_2 = 2$ leads to a lower error relative to $\omega_2 = 0.5$ when estimating the $\qq_i$ and $\sss_i$. As expected, the non-zero eigenvalues are equal to $\cos \omega_i$.

We next consider the $\tau$-DMD algorithm, and verify Theorems \ref{thm:shift} and \ref{thm:lagtautimeseries}, as well as Corollary \ref{thm:armacorrtau}. We generate the columns of $C$ as independent, length $n$ realizations of AR(2) processes. That is, $\cc_1$ is a realization of an AR(2) process with parameters $(0.2, 0.7)$, and $\cc_2$ is also a realization of an AR(2) process with parameters $(0.3, 0.5)$. We compare operating at lags $\tau = 1$ and $\tau = 2$, and average over $200$ realizations. Our results appear in Figure \ref{fig:arma}. Note that for a given lag, the non-zero eigenvalues are expected to equal the autocorrelation of the $\cc_i$ at that lag; invoking the role of $\delta_{L}$ once again, we observe that the $\qq_i$ are better estimated at a lag of $\tau = 2$, as the lag-$2$ autocorrelations are higher and more separated than the lag-$1$ values. As expected, the squared estimation errors are bounded by $O(\log \log n / n)$.

{Finally, we consider the tSVD-DMD algorithm in the presence of missing data, and verify Theorem \ref{thm:missing}. Here, we fix $p = 2000$ and let $d_1 = 2$ and $d_2 = 1$. We let the columns of $C$ be equal to $\cc_{i, t} = \cos\left( \omega_i t\right)$, for $\omega_1 = 0.25$ and $\omega_2 = 2.0$. Our results are averaged over $50$ trials. We consider the effects of varying the entry-wise observation probability $q$ (for $n = 10^4$) in Figure \ref{fig:missing_fix_n}, and the effects of varying $n$ (for $q = 0.1$) in Figure \ref{fig:missing_fix_q}. As expected, we see that the squared estimation error of the eigenvectors decays at a rate bounded by $O(1/\sqrt{n})$ for fixed $q$ and like $O(q^{-3/2})$ for fixed $n$ when using the truncated SVD as a preprocessing step. The squared estimation error of the eigenvalues is bounded by the same rates. It is likely that these rates are somewhat conservative. Note that the error of DMD without the SVD is orders of magnitude larger than it is with the SVD, and does not exhibit significant decay with increasing $n$ or $q$.  }

\begin{figure*}[htb]
\begin{multicols}{3}
\begin{minipage}[b]{\linewidth}
\centering
\centerline{\includegraphics[width = \textwidth]{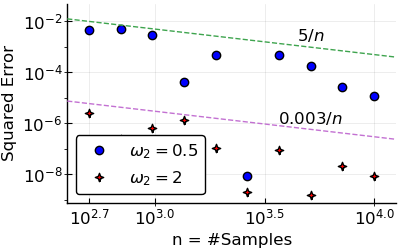}}
\subcaption{The squared estimation error of $\widehat{Q}$ as in (\ref{eq:cos_bound}). }
\end{minipage}
\begin{minipage}[b]{\linewidth}
\centering
\centerline{\includegraphics[width = \textwidth]{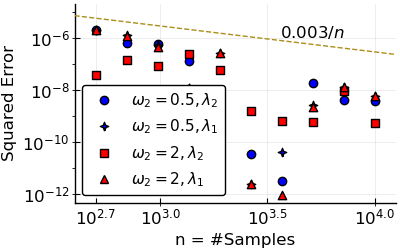}}
\subcaption{The squared estimation error of the eigenvalues $\widehat{\lambda}_i$ as in (\ref{eq:eig dmd tau}).}
\end{minipage}
\begin{minipage}[b]{\linewidth}
\centering
\centerline{\includegraphics[width = \textwidth]{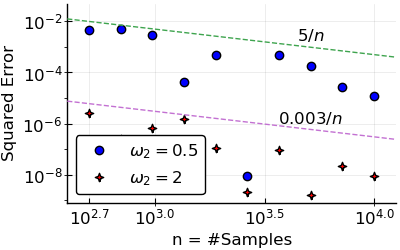}}
\subcaption{The squared estimation error of $\widehat{S}$ as in (\ref{eq:S_err}). }
\end{minipage}
\end{multicols}
\vspace{-0.7cm}
\caption{Here, we verify Theorem \ref{thm:shift} and Corollary \ref{thm:cos}, as well as Theorem \ref{thm:S_bound} and Corollary \ref{thm:cos_S}. We simulate from model (\ref{eq:start model}) with a rank $2$ cosine signal, first using $\omega_1 = 0.25$ and $\omega_2 = 0.5$, and second using $\omega_2 = 2$. We fix $p = 100$ and use a non-orthogonal $Q$, and apply DMD with $\tau = 1$. Note that as $\omega_1$ is fixed,  $\omega_2 = 2$ leads to a lower error relative to $\omega_2 = 0.5$, due to the greater separation of the frequencies: the error is proportional to $\frac{1}{\left|\omega_1 - \omega_2\right|}$. We also plot lines above the samples indicating that the error is bounded by $O(1/n)$. }
\label{fig:thmgeneral_cos}
\end{figure*}

\begin{figure*}[htb]
\begin{multicols}{3}
\begin{minipage}[b]{\linewidth}
\centering
\centerline{\includegraphics[width = \textwidth]{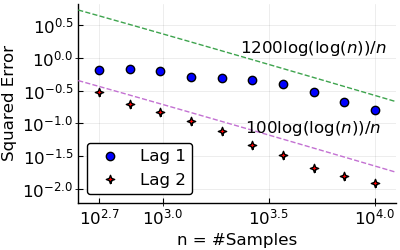}}
\subcaption{The squared estimation error of $\widehat{Q}$ as in (\ref{eq:lagtauloss}). }
\end{minipage}
\begin{minipage}[b]{\linewidth}
\centering
\centerline{\includegraphics[width = \textwidth]{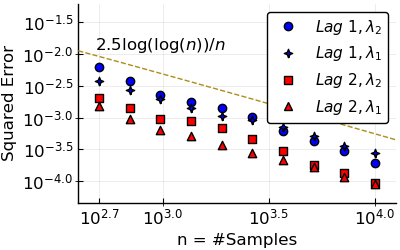}}
\subcaption{The squared estimation error of the eigenvalues $\widehat{\lambda}_i$ as in (\ref{eq:lagtaueig}). }
\end{minipage}
\begin{minipage}[b]{\linewidth}
\centering
\centerline{\includegraphics[width = \textwidth]{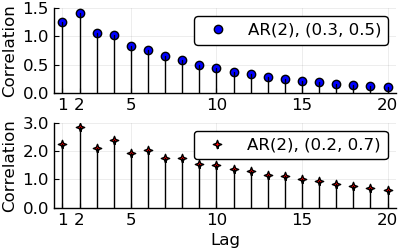}}
\subcaption{The autocorrelation function of the processes in $C$. }
\end{minipage}
\end{multicols}
\vspace{-0.7cm}
\caption{Here, we verify Theorems \ref{thm:shift} and \ref{thm:lagtautimeseries}, as well as Corollary \ref{thm:armacorrtau}. We simulate from model (\ref{eq:start model}) with a rank $2$ signal. The signals in $C$ are drawn as realizations from AR(2) processes, the first with parameters $[0.3, 0.5]$ and the second with parameters $[0.2, 0.7]$. We fix $p = 100$ and use a non-orthogonal $Q$. The lag-2 DMD algorithm leads to a lower eigenvector loss, as expected, since the autocorrelations at lag-$2$ are more separated from each other and from zero than they are at a lag of $1$. We also plot lines above the samples indicating that the error is bounded by $O(\log \log n / n)$.}
\label{fig:arma}
\end{figure*}

\begin{figure*}[htb]
\begin{multicols}{2}
\begin{minipage}[b]{\linewidth}
\centering
\centerline{\includegraphics[width = \textwidth]{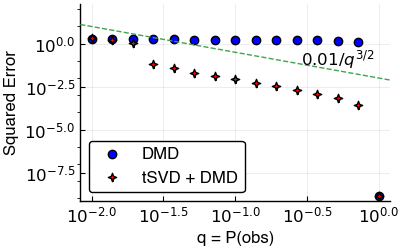}}
\subcaption{The squared estimation error of $\widehat{Q}$ as in (\ref{eq:missingloss}).}
\end{minipage}
\begin{minipage}[b]{\linewidth}
\centering
\centerline{\includegraphics[width = \textwidth]{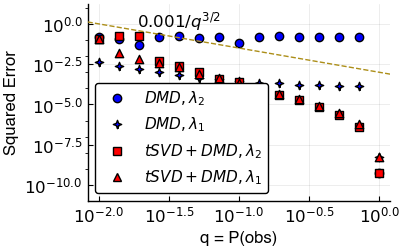}}
\subcaption{The squared estimation error of the eigenvalues $\widehat{\lambda}_i$ as in (\ref{eq:eigenvalue_missing}). }
\end{minipage}
\end{multicols}
\vspace{-0.7cm}
\caption{{Here, we verify Theorem \ref{thm:missing}. We fix the sample size $n = 10^4$, and vary the observation probability. We simulate from model (\ref{eq:start model}) with a rank $2$ cosine signal, using $\omega_1 = 0.25$ and $\omega_2 = 2$. We fix $p = 2000$ and use a non-orthogonal $Q$. We fix $d_1 = 2$ and $d_2 = 1$. We plot the error for the rank-$2$ truncated SVD (tSVD) followed by DMD, and for just DMD (both with a lag of $1$). The results show that the truncated SVD offers a tangible benefit over vanilla DMD. We also plot lines above the samples indicating that the error from the rank-$2$ tSVD + DMD algorithm is bounded by $O(1/q^{3/2})$. Note that the theoretical rate for the eigenvalue error is likely conservative.}}
\label{fig:missing_fix_n}
\end{figure*}

\begin{figure*}[htb]
\begin{multicols}{2}
\begin{minipage}[b]{\linewidth}
\centering
\centerline{\includegraphics[width = \textwidth]{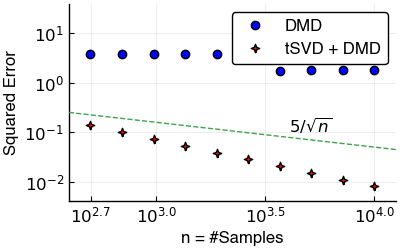}}
\subcaption{The squared estimation error of $\widehat{Q}$ as in (\ref{eq:missingloss}). }
\end{minipage}
\begin{minipage}[b]{\linewidth}
\centering
\centerline{\includegraphics[width = \textwidth]{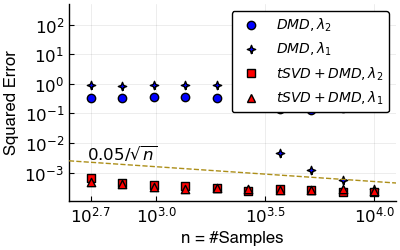}}
\subcaption{The squared estimation error of the eigenvalues $\widehat{\lambda}_i$ as in (\ref{eq:eigenvalue_missing}). }
\end{minipage}
\end{multicols}
\vspace{-0.7cm}
\caption{{Here, we verify Theorem \ref{thm:missing}. We fix the observation probability $q = 0.1$, and vary the sample size $n$. We simulate from model (\ref{eq:start model}) with a rank $2$ cosine signal, using $\omega_1 = 0.25$ and $\omega_2 = 2$. We fix $p = 2000$ and use a non-orthogonal $Q$. We fix $d_1 = 2$ and $d_2 = 1$. We plot the error for the rank-$2$ truncated SVD (tSVD) followed by DMD, and for just DMD (both with a lag of $1$). The results show that the truncated SVD offers a tangible benefit over vanilla DMD. We also plot lines above the samples indicating that the error from the rank-$2$ tSVD + DMD algorithm is bounded by $O(1 / \sqrt{n})$. Note that the theoretical rate for the eigenvector error is likely conservative.}}
\label{fig:missing_fix_q}
\end{figure*}

\subsection{Comparison with AMUSE/SOBI}

We end this section with a comparison of DMD with the AMUSE/SOBI method for source separation \citep{miettinen2016separation}. Once again we simulate from model (\ref{eq:start model}) with a rank $k = 2$ cosine signal, using $\omega_1 = 0.25$ and $\omega_2 = 2$. We fix $p = 500$, use a $Q$ with non-orthogonal columns, and $d_1 = 2$ and $d_2 = 1$. We use a lag of $1$ for the SOBI algorithm (in this case, it is the AMUSE algorithm as we use a single lag). We present these results in Figure \ref{fig:SOBI}, where we observe that DMD outperforms AMUSE. We note that with some tuning/lag selection, it is possible that SOBI may do better than DMD, but as DMD uses a single lag, SOBI/AMUSE with a single lag is perhaps a fairer comparison. Note that we perform the comparison on a deterministic signal. 

{The theoretical results for SOBI and AMUSE are asymptotic consistency statements, i.e., in the large sample limit, if the latent signals are statistically independent, we may consistently (in a statistical sense) recover them \cite{miettinen2016separation, tong1990amuse}. Other Independent Component Analysis (ICA) methods for this problem have similar statements \cite{chen2006efficient}. It is important to note that here, we have a much weaker assumption (uncorrelatedness at two lags as opposed to independence) and that our results are finite sample bounds.}

\begin{figure*}[htb]
\begin{multicols}{2}
\begin{minipage}[b]{\linewidth}
\centering
\centerline{\includegraphics[width = \textwidth]{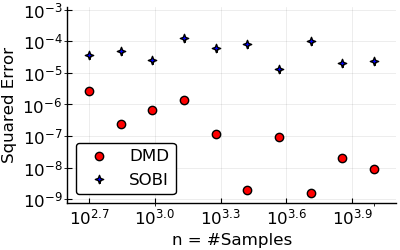}}
\subcaption{The squared estimation error of $\widehat{Q}$ as in (\ref{eq:cos_bound}).}
\end{minipage}

\begin{minipage}[b]{\linewidth}
\centering
\centerline{\includegraphics[width = \textwidth]{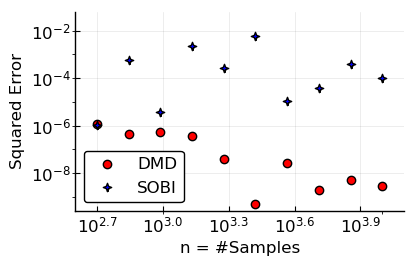}}
\subcaption{The squared estimation error of $\widehat{S}$ as in (\ref{eq:S_err}).}
\end{minipage}
\end{multicols}
\vspace{-0.7cm}
\caption{Here, we present results for DMD and AMUSE/SOBI. We simulate from model (\ref{eq:start model}) with a rank $2$ cosine signal, using $\omega_1 = 0.25$ and $\omega_2 = 2$. We fix $p = 500$ and use a $Q$ with non-orthogonal columns. We fix $d_1 = 2$ and $d_1 = 1$. We plot the estimation error of $\widehat{Q}$ and $\widehat{S}$ and compare the performance of DMD with AMUSE/SOBI for a lag of $1$. With a lag of $1$, DMD  outperforms AMUSE/SOBI.}

\label{fig:SOBI}
\end{figure*}

\section{Dynamic Mode Factorization of a Time Series Data Matrix}\label{sec:realdata}

We present the Dynamic Mode Factorization (DMF) algorithm for real data in Algorithm \ref{alg:dmf}. We take the data matrix $X$ and a lag $\tau$ as inputs, and return a factorization of $X$. Our goal is to write $X = Q C^T$, where the columns of $Q$ have unit norm. If the matrix has missing entries then we fill in the missing entries with zeroes and then compute the rank $k$ (assumed known) truncated SVD approximation of the matrix as suggested by the analysis in Section \ref{sec:missing}. We assume henceforth that we are working with this filled-in matrix. If the data matrix has zero mean columns, then we estimate the column-wise mean of $X$ and subtract it to form $\overline{X}$:
\begin{equation}\label{eq:mu_Xmu}
\widehat{\bmu} = \frac{1}{n} \sum_{i = 1}^n \xx_i 
\textrm{ so that }
\overline{X} = X - \widehat{\bmu} \bones_n^T.
\end{equation}
Next, we define $\overline{X}_{(0)}^{\tau}$ and $\overline{X}_{(1)}^{\tau}$ analogously to (\ref{eq:X0X1tau}), and form
$\widehat{A}_{\tau} = \overline{X}_{(1)}^{\tau} \left[\overline{X}_{(0)}^{\tau}\right]^{+}$.
The eigenvectors of $\widehat{A}_{\tau}$ are the columns of $\widehat{Q}$, so that
$\widehat{C}^T = \widehat{Q}^{-1} \widehat{\bmu} \bones_n^T + \widehat{Q}^{-1} \overline{X}$.
Note that for a real dataset, we care about $C$ rather than $S$: the scale of our data matters, as does the mean. 

\renewcommand{\algorithmicrequire}{\textbf{Input:}}
\renewcommand{\algorithmicensure}{\textbf{Return:}}
\begin{algorithm*}
\begin{algorithmic}[1]
\small
\REQUIRE Data $X = \begin{bmatrix}\xx_1 &  \xx_2 &  \ldots & \xx_n\end{bmatrix}$, Integer lag $0 < \tau < n$.
\renewcommand{\algorithmicrequire}{\textbf{Goal:}}
\REQUIRE $X = \widehat{Q} \widehat{C}^T.$
\STATE Compute  $\widehat{\bmu}$ and $\overline{X} = \begin{bmatrix} \bar{\xx}_1 & \bar{\xx}_2 & \ldots & \bar{\xx}_n\end{bmatrix}$ as in (\ref{eq:mu_Xmu}).
\STATE Form $\overline{X}_{(0)}^{\tau} = \begin{bmatrix}\bar{\xx}_1 & \bar{\xx}_2 & \ldots & \bar{\xx}_{n - \tau}\end{bmatrix} \textrm{ and } \overline{X}_{(1)}^{\tau} = \begin{bmatrix}\bar{\xx}_{1 + \tau} & \bar{\xx}_{2 + \tau} & \ldots & \bar{\xx}_{n}\end{bmatrix}$.
\STATE Compute $\widehat{A}_{\tau} = \overline{X}_{(1)}^{\tau} \left[\overline{X}_{(0)}^{\tau}\right]^{+}$.
\STATE Compute $\widehat{A}_{\tau} = \widehat{Q} \widehat{\Lambda} \widehat{Q}^{-1}$ with eigenvalues  sorted by decreasing order of magnitude.
\STATE Compute $\widetilde{C}^T = \widehat{Q}^{-1} \overline{X}$.
\STATE Compute $\widehat{C}^T = \widehat{Q}^{-1} \widehat{\bmu} \bones_n^T + \widetilde{C}^T$.
\ENSURE $\widehat{Q}$, $\widehat{C}$.
\end{algorithmic}
\caption{Dynamic Mode Factorization} \label{alg:dmf}
\end{algorithm*}

\subsection{Application: Source Separation}

Next we illustrate that Algorithm \ref{alg:dmf} can unmix mixed audio signals. The first signal contains the sound of a police siren, and the second contains a music segment. The two signals have $n = 50000$ samples taken at $8$ kHz, for a duration of $6.25$ seconds each. We de-mean and scale the signals to the range $[-1, 1]$, and form an $n \times 2$ matrix $C$ with these scaled signals as columns. We mix the signals with 
$Q = \frac{1}{\sqrt{5}} \begin{bmatrix} 1 & 2 \\ 2 & 1\end{bmatrix}$,
and generate a $2 \times n$ data matrix $X = \widehat{Q} C^T$ of the mixed signals, as in (\ref{eq:our model}). Note that the $Q$ matrix does not have orthogonal columns. Figures (\ref{fig:audio}-e) and (f) show the estimates 
$\widehat{C} = \left(Q^{+} X\right)^T$
produced by the DMF algorithm with a lag of $\tau = 1$, when $X$ is the input as in Figures (\ref{fig:audio}-c) and (d). Employing PCA on $X$ does not work well here because the mixing matrix $Q$ is not orthogonal. Figures  \ref{fig:audio}-(g) and (h) show that PCA fails where the DMD algorithm succeeds. For completeness, in Figures \ref{fig:audio}-(i) and (j) we also display the results from using kurtosis-based ICA to unmix the signals. We observe that ICA performs well, but not as well as DMF (or as quickly). 

\begin{figure*}[!htb]
\begin{multicols}{6}
\begin{minipage}[b]{\linewidth}
\centering
\centerline{\includegraphics[width = \textwidth, trim = {0 0 0 0.0cm}, clip]{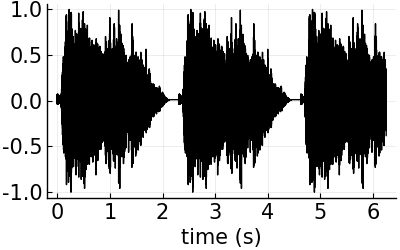}}
\subcaption{Audio 1}
\end{minipage}
\begin{minipage}[b]{\linewidth}
\centering
\centerline{\includegraphics[width = \textwidth, trim = {0 0 0 0.0cm}, clip]{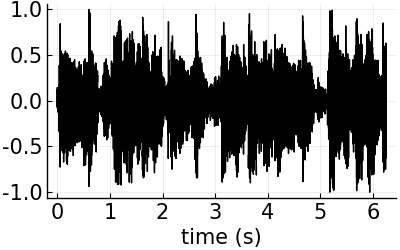}}
\subcaption{Audio 2}
\end{minipage}
\begin{minipage}[b]{\linewidth}
\centering
\centerline{\includegraphics[width = \textwidth, trim = {0 0 0 0.0cm}, clip]{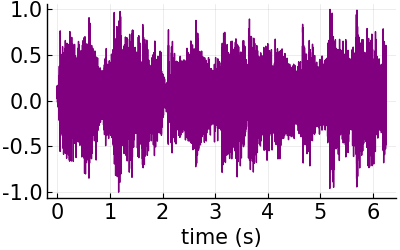}}
\subcaption{Mixed 1}
\end{minipage}
\begin{minipage}[b]{\linewidth}
\centering
\centerline{\includegraphics[width = \textwidth, trim = {0 0 0 0.0cm}, clip]{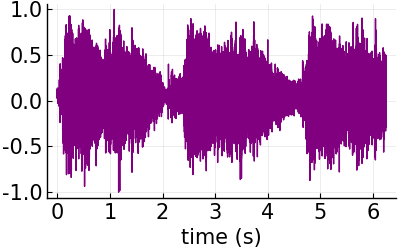}}
\subcaption{Mixed 2}
\end{minipage}
\begin{minipage}[b]{\linewidth}
\centering
\centerline{\includegraphics[width = \textwidth, trim = {0 0 0 0.0cm}, clip]{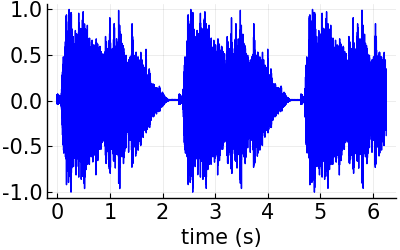}}
\subcaption{DMD 1}
\end{minipage}
\begin{minipage}[b]{\linewidth}
\centering
\centerline{\includegraphics[width = \textwidth, trim = {0 0 0 0.0cm}, clip]{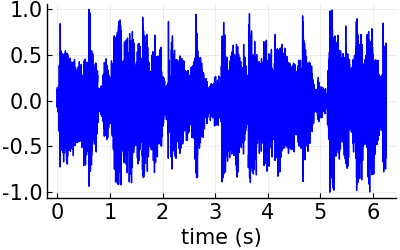}}
\subcaption{DMD 2}
\end{minipage}
\begin{minipage}[b]{\linewidth}
\centering
\centerline{\includegraphics[width = \textwidth, trim = {0 0 0 0.0cm}, clip]{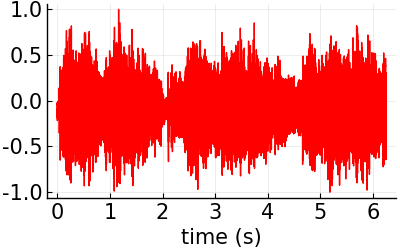}}
\subcaption{PCA 1}
\end{minipage}
\begin{minipage}[b]{\linewidth}
\centering
\centerline{\includegraphics[width = \textwidth, trim = {0 0 0 0.0cm}, clip]{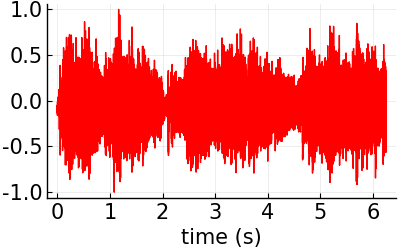}}
\subcaption{PCA 2}
\end{minipage}
\begin{minipage}[b]{\linewidth}
\centering
\centerline{\includegraphics[width = \textwidth, trim = {0 0 0 0.0cm}, clip]{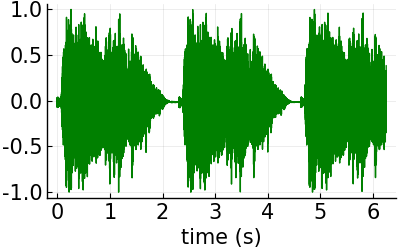}}
\subcaption{ICA 1}
\end{minipage}
\begin{minipage}[b]{\linewidth}
\centering
\centerline{\includegraphics[width = \textwidth, trim = {0 0 0 0.0cm}, clip]{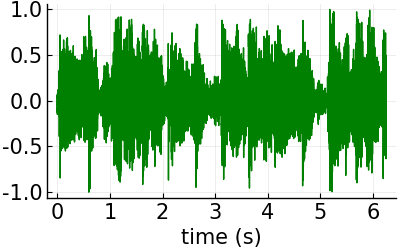}}
\subcaption{ICA 2}
\end{minipage}
\begin{minipage}[b]{\linewidth}
\centering
\centerline{\includegraphics[width = \textwidth, trim = {0 0 0 0.0cm}, clip]{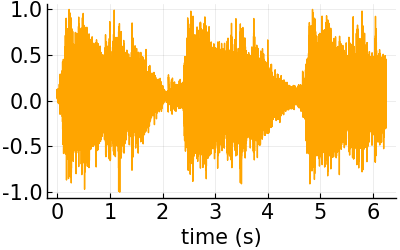}}
\subcaption{SOBI 1}
\end{minipage}
\begin{minipage}[b]{\linewidth}
\centering
\centerline{\includegraphics[width = \textwidth, trim = {0 0 0 0.0cm}, clip]{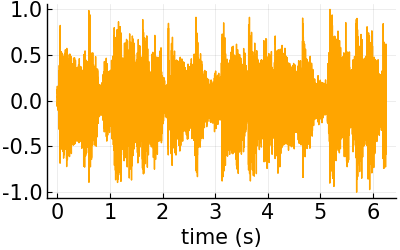}}
\subcaption{SOBI 2}
\end{minipage}
\end{multicols}
\vspace{-0.7cm}
\caption{We mix two audio signals (a police siren and a music segment), and observe that DMD successfully unmixes the signals. The squared estimation error for the unmixed signals is $2.978 \times 10^{-5}$. However, we observe that the SVD cannot unmix the signals: the squared estimation errors for the unmixed signals is $1.000$. We also display the results of ICA, which has a squared estimation errors for the unmixed signals of $0.0015$, and SOBI, which has an error of $0.00125$.}
\label{fig:audio}
\vspace{-0.4cm}
\end{figure*}

\subsection{Application: Changepoint Detection}\label{sec:changepoint}

Often, real time series contain one or more changepoints. That is, there are points in time at which the distribution or characteristics of the signal changes. In the context that we are working in, perhaps the data may exhibit a transition between modes; {we consider such an example in Figure \ref{fig:changepoint}.} In this setting, we fix $p = 4$, $k = 4$, and use 
$Q = \frac{1}{\sqrt{5}} \begin{bmatrix} 1 & 0 & 0 & 2 \\ 2 & 1 & 0 & 0 \\ 0 & 2 & 1 & 0 \\ 0 & 0 & 2 & 1 \end{bmatrix}$.
We fix $n = 1000$, and generate $C$ as follows. The first $500$ samples of $\cc_1$ are a realization of an AR(2) process with parameters $(0.2, 0.7)$, and the remaining $500$ samples are identically zero. The first $500$ samples of $\cc_2$ are identically zero, and the remaining $500$ are a realization of an AR(2) process with parameters $(0.3, 0.5)$. The first $500$ samples of $\cc_3$ are generated as $\cos 2 t$, and the remaining $500$ are identically zero.  The first $500$ samples of $\cc_4$ are identically zero, and the remaining $500$ are generated as $\cos t/2$. 

We hope that our algorithm estimates $Q$ and $S$ with low error, and that our estimated $S$ correctly captures the changepoints. That is, we hope to \emph{visually} be able to pick out when a changepoint occurs. Indeed, we find that the squared error for both $Q$ is approximately $0.069$ and that for $S$ is $0.035$, and that the estimated signals are correctly identified. Moreover, the changepoints are clearly visible. Note that PCA fails to pick out the individual signals, while preserving the changepoints; this is expected behavior, due to the non-orthogonality of the mixing. Kurtosis-based ICA also fails, as the two AR processes have Gaussian marginals. 

\begin{figure*}[!htb]
\begin{multicols}{4}
\begin{minipage}[b]{\linewidth}
\centering
\centerline{\includegraphics[width = \textwidth, trim = {0 0 0 0.5cm}, clip]{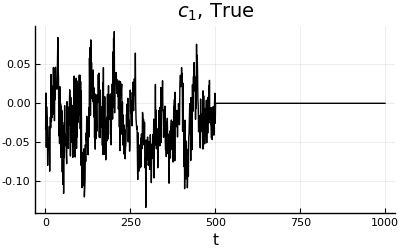}}
\subcaption{$\cc_1$}
\end{minipage}
\begin{minipage}[b]{\linewidth}
\centering
\centerline{\includegraphics[width = \textwidth, trim = {0 0 0 0.5cm}, clip]{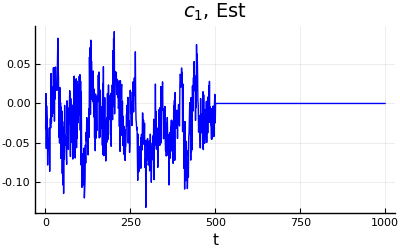}}
\subcaption{$\widehat{\cc}_1, DMD$}
\end{minipage}
\begin{minipage}[b]{\linewidth}
\centering
\centerline{\includegraphics[width = \textwidth, trim = {0 0 0 0.5cm}, clip]{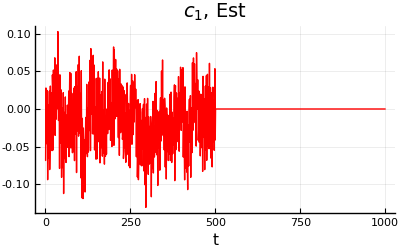}}
\subcaption{$\widehat{\cc}_1, PCA$}
\end{minipage}
\begin{minipage}[b]{\linewidth}
\centering
\centerline{\includegraphics[width = \textwidth, trim = {0 0 0 0.7cm}, clip]{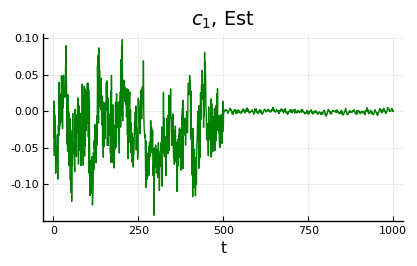}}
\subcaption{$\widehat{\cc}_1, ICA$}
\end{minipage}
\begin{minipage}[b]{\linewidth}
\centering
\centerline{\includegraphics[width = \textwidth, trim = {0 0 0 0.5cm}, clip]{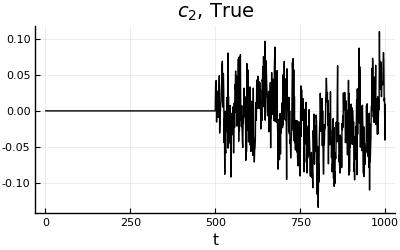}}
\subcaption{$\cc_2$}
\end{minipage}
\begin{minipage}[b]{\linewidth}
\centering
\centerline{\includegraphics[width = \textwidth, trim = {0 0 0 0.5cm}, clip]{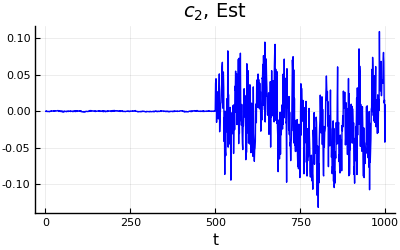}}
\subcaption{$\widehat{\cc}_2$, DMD}
\end{minipage}
\begin{minipage}[b]{\linewidth}
\centering
\centerline{\includegraphics[width = \textwidth, trim = {0 0 0 0.5cm}, clip]{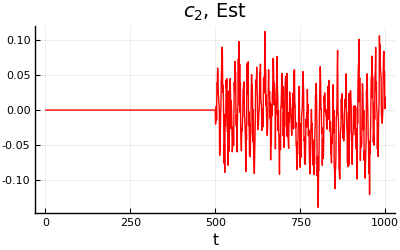}}
\subcaption{$\widehat{\cc}_2$, PCA}
\end{minipage}
\begin{minipage}[b]{\linewidth}
\centering
\centerline{\includegraphics[width = \textwidth, trim = {0 0 0 0.7cm}, clip]{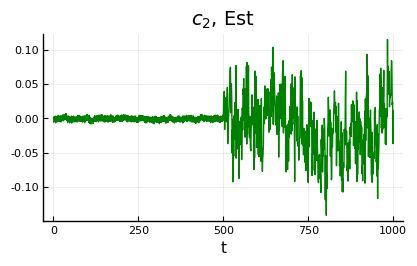}}
\subcaption{$\widehat{\cc}_2$, ICA}
\end{minipage}
\begin{minipage}[b]{\linewidth}
\centering
\centerline{\includegraphics[width = \textwidth, trim = {0 0 0 0.5cm}, clip]{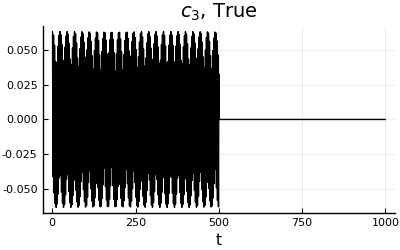}}
\subcaption{$\cc_3$}
\end{minipage}
\begin{minipage}[b]{\linewidth}
\centering
\centerline{\includegraphics[width = \textwidth, trim = {0 0 0 0.5cm}, clip]{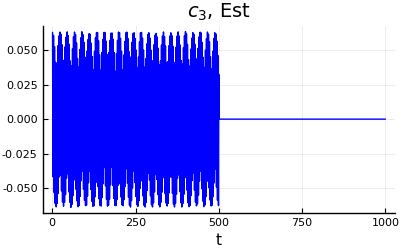}}
\subcaption{$\widehat{\cc}_3$, DMD}
\end{minipage}
\begin{minipage}[b]{\linewidth}
\centering
\centerline{\includegraphics[width = \textwidth, trim = {0 0 0 0.5cm}, clip]{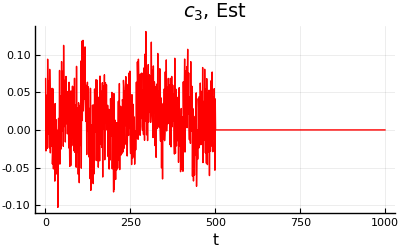}}
\subcaption{$\widehat{\cc}_3$, PCA}
\end{minipage}
\begin{minipage}[b]{\linewidth}
\centering
\centerline{\includegraphics[width = \textwidth, trim = {0 0 0 0.7cm}, clip]{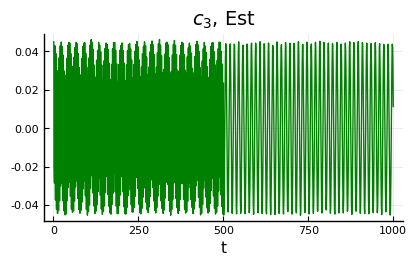}}
\subcaption{$\widehat{\cc}_3$, ICA}
\end{minipage}
\begin{minipage}[b]{\linewidth}
\centering
\centerline{\includegraphics[width = \textwidth, trim = {0 0 0 0.5cm}, clip]{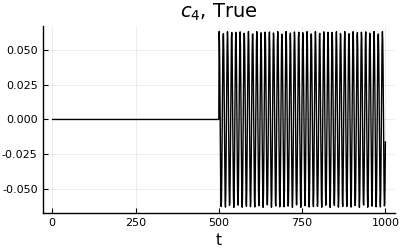}}
\subcaption{$\cc_4$}
\end{minipage}
\begin{minipage}[b]{\linewidth}
\centering
\centerline{\includegraphics[width = \textwidth, trim = {0 0 0 0.5cm}, clip]{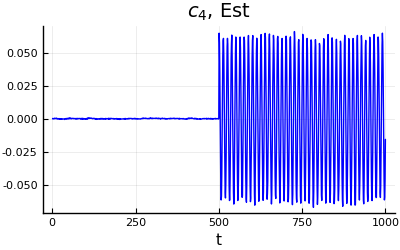}}
\subcaption{$\widehat{\cc}_4$, DMD}
\end{minipage}
\begin{minipage}[b]{\linewidth}
\centering
\centerline{\includegraphics[width = \textwidth, trim = {0 0 0 0.5cm}, clip]{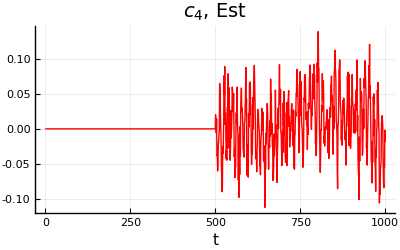}}
\subcaption{$\widehat{\cc}_4$, PCA}
\end{minipage}
\begin{minipage}[b]{\linewidth}
\centering
\centerline{\includegraphics[width = \textwidth, trim = {0 0 0 0.7cm}, clip]{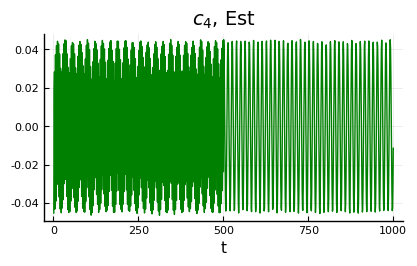}}
\subcaption{$\widehat{\cc}_4$, ICA}
\end{minipage}
\end{multicols}
\vspace{-0.7cm}
\caption{We generate $k = 4$ signals of length $n = 1000$, and mix them. Each signal has a changepoint, in that it switches from all zeros to a definite, non-zero signal. We find that the DMF algorithm perfectly captures the underlying signals, in addition to estimating $Q$ and $S$ (squared errors of $0.0098$ and $0.0096$, respectively) very well. We plot the estimated $\cc_i$ beside the true signals, and observe perfect overlap. As a comparison, we plot the results from using PCA and ICA below those from DMD. We observe that PCA fails dramatically, due to the non-orthogonality of the mixing, and that ICA does as well, due to the Gaussianity of the marginal distributions of the AR(2) processes.}
\label{fig:changepoint}
\vspace{-0.4cm}
\end{figure*}

\section{Conclusions} \label{sec:conclusions}

Our analysis has revealed that DMD unmixes deterministic signals and stationary, ergodic time series that are uncorrelated at a lag of $1$ time-step. We have analyzed the unmixing performance of  DMD in the finite sample setting with and without randomly missing data, and have introduced and analyzed a natural higher-lag extension of DMD. We have provided numerical simulations to verify our theoretical results. We have shown (empirically) how the higher lag DMD can outperform conventional (lag-1) DMD for time series for which  there is a higher autocorrelation at higher lags than at lag 1: this is a natural extension of DMD that practitioners should adopt and experiment with. Moreover, we showed how DMD (like ICA-family methods) can successfully solve the cocktail party problem. Our results reveal why DMD will succeed in unmixing Gaussian time series while kurtois-based ICA fails, and also why applying DMD to a multivariate mixture of Fourier series type data, like in the eigen-walker model, can better reveal non-orthogonal mixing matrices in a way that PCA fundamentally cannot. 

There many directions for extending this research. {Analyzing and improving the performance of DMD and the tSVD-DMD algorithm and comparing it to that of SOBI in the noisy, finite sample setting is a natural next step. We have taken some preliminary steps in this direction in \cite{prasadan_tSVD_DMDconf}, where we have given performance bounds for the tSVD-DMD algorithm.} Additionally, selecting a lag at which to perform DMD is an open problem. Note that the performance of SOBI is known to be sensitive to the choice of the lag parameter \citep{tang2005recovery}, and that in Figure \ref{fig:arma}, we presented an example of a mixed time series for which $\tau$-DMD with $\tau = 2$ outperforms conventional ($\tau = 1$) DMD. One might recast the lag selection problem into a problem of optimal weight selection for a weighted multi-lag DMD setup where we consider the eigenvectors of the matrix  
$\widehat{A}_{\textrm{agg}} = \sum_{i = 1}^l w_i \widehat{A}_{\tau_i}$,
where $\widehat{A}_{\tau_i}$ is the matrix in (\ref{eq:Ahattau}) and we optimize for the weights $w_i$ which yield the best estimate for the mixing matrix $Q$ in (\ref{eq:QandS}). There are intriguing connections between this formulation and spectral density estimation in time series analysis \citep{parzen1957consistent} and multi-taper spectral estimation \citep{babadi2014review, hanssen1997multidimensional, anden2018multitaper} that suggest ways of improving the performance of DMD, and also SOBI (as the work in \citet{tichavsky2006computationally} does), in the presence of finite, noisy data in a manner that makes it robust to the lag selection misspecification. 

Finally, non-linear extensions of this work, particularly in the design and analysis of provably convergent DMD-based unmixing on non-linearly mixed ergodic time series are of interest and would complement related works on non-linear ICA  \citep{almeida2003misep, eriksson2002blind,  hyvarinen2016unsupervised, matsuda2018estimation, hyvarinen2017nonlinear, brakel2017learning, hyvarinen2018nonlinear, grais2014deep, amari1995recurrent, yang2019learning} and non-linear DMD \citep{williams2015data, tu2014dynamic}. 
 
\subsection*{Acknowledgements}

We thank Amit Surana and J. Nathan Kutz for introducing us to and intriguing us with their research applying the DMD algorithm during their respective seminars at the University of Michigan. We thank MIDAS and the MICDE seminar organizers for inviting them and especially thank J. Nathan Kutz for his thought-provoking statement that ``DMD is Fourier meets the Eigenvalue Decomposition''. This thought-provoking comment seeded our inquiry and led to the formulation in Section \ref{ssec:costhm}, from which the rest of our results flowered. We thank Hao Wu and Asad Lodhia for their detailed comments and suggestions on earlier versions of this manuscript, and particularly Florica Constantine for her suggestions and numerous edits. We thank Harish Ganesh for his perspectives and thoughts on the use of DMD in its original domain of experimental fluid mechanics. We thank Alfred Hero and Jeff Fessler for feedback and suggestions, and Shai Revzen for his thought-provoking comments and insights about when DMD does and does not work in real-world settings--they have provided us with fodder for many more questions than this work answers. The Julia package \url{https://github.com/aprasadan/DMF.jl} contains  code to reproduce all the simulations herein.


\appendix

\section{Proof of Theorem \protect\ref{thm:shift} for $\tau = 1$} \label{sec:proof_gen}

Recall the definitions of $X_{(0)}$ and $X_{(1)}$ from (\ref{eq:X0X1}). Noting that $X = Q D S^T$, we may define $S_{(0)}$ and $S_{(1)}$, where
\begin{align}\label{eq:S0S1}
S_{(0)} = \begin{bmatrix} 
s_{1, 1} & s_{2, 1} & \cdots & s_{k, 1} \\
s_{1, 2} & s_{2, 2} & \cdots & s_{k, 2} \\
\vdots & \vdots & \cdots & \vdots \\
s_{1, n - 1} & s_{2, n - 1} & \cdots & s_{k, n - 1}
\end{bmatrix}
\textrm{ and }
S_{(1)} = \begin{bmatrix} 
s_{1, 2} & s_{2, 2} & \cdots & s_{k, 2} \\
s_{1, 3} & s_{2, 3} & \cdots & s_{k, 3} \\
\vdots & \vdots & \cdots & \vdots \\
s_{1, n} & s_{2, n} & \cdots & s_{k, n}
\end{bmatrix}.
\end{align}
Then, we have that 
\begin{equation} \label{eq:X0X1_S}
X_{(0)} = Q D S_{(0)}^T \textrm{ and } X_{(1)} = Q D S_{(1)}^T.
\end{equation}

We make the key observation that 
\begin{equation} \label{eq:S1 split}
\begin{split}
S_{(1)}^T = \begin{bmatrix} \sss_{1, 2} & \sss_{1, 3} & \cdots & \sss_{1, n - 1} & \sss_{1, 1} \\  \vdots & \vdots & \cdots & \vdots & \vdots  \\ \sss_{k, 2} & \sss_{k, 3} & \cdots & \sss_{k, n - 1} & \sss_{k, 1}\end{bmatrix} + \begin{bmatrix} 0 & \cdots & 0 & \sss_{1, n} - \sss_{1, 1} \\ \vdots & \cdots & \vdots & \vdots \\ 0 & \cdots & 0 & \sss_{k, n} - \sss_{k, 1}\end{bmatrix}.
\end{split}
\end{equation}
Let $P$ be the $(n - 1) \times (n - 1)$ lag-$1$ circular shift matrix as described in the construction of the lag-$1$ inner-product matrix $L = L_{1}$ in (\ref{eq:Ltau}). A comparison of the first term on the right-hand side in the decomposition of $S^T_{(1)}$ in (\ref{eq:S1 split}) with the column partition decomposition of $S^T_{(0)}$ in (\ref{eq:S0S1}) reveals that this first term is a lag-$1$ circular shift of the matrix $S^T_{(0)}$. Consequently, we may express $S^T_{(1)}$ as  
\begin{equation} \label{eq:S_shift}
S^T_{(1)} = S^T_{(0)} P + \Delta_1,
\end{equation}
where $S^T_{(0)} P$ is the lag-$1$ circular shift of $S^T_{(0)}$ and $\Delta_1$ is the rank $1$ error matrix given by the second term in the right-hand side of (\ref{eq:S1 split}).
Thus, from (\ref{eq:X0X1_S}) we have that
\begin{equation}\label{eq:X1Delta}
X_{(1)} = Q D (S^T_{(0)}P+ \Delta_1) = Q D S^T_{(0)} P + \Delta_X,
\end{equation}
where $\Delta_X = Q D \Delta_1$.
Consequently, by substituting the expression of $X_{(1)}$ from (\ref{eq:X1Delta}) and $X_{(0)}$ from (\ref{eq:X0X1_S}), we can express $\widehat{A}$ as
\begin{equation}\label{eq:Ahat split1}
\begin{split}
\widehat{A} &= X_{(1)} X_{(0)}^{+}  
     = Q L_{D} Q^{+} + \widehat{\Delta}_X
    \end{split}
\end{equation}
where
\begin{equation}\label{eq:widehatdeltaX}
 \widehat{\Delta}_X = \Delta_X \left(S^T_{(0)}\right)^+ D^+ Q^+
\textrm{ and }
L_D = D S_{(0)}^T P \left(S^T_{(0)}\right)^+ D^+.
\end{equation}
{Let $\textrm{diag}(\cdot)$ denote the diagonal matrix determined by the main diagonal of its argument.} Then, the matrix $L_D$ can be decomposed as 
\begin{equation}\label{eq:LD diagonal}
L_D = \underbrace{\textrm{diag}(L_D)}_{=:\Lambda}+ \Delta_L.
\end{equation}
Substituting the expression of $L_D$ in (\ref{eq:LD diagonal}) into the first term on the right hand side of (\ref{eq:Ahat split1}) gives us the expression
\begin{equation} \label{eq:Aperturbation}
\widehat{A} = Q \Lambda Q^+  + \widehat{\Delta}_A,
\textrm{ where }
\widehat{\Delta}_A = Q \Delta_L Q^+ \widehat{\Delta}_X.
\end{equation}

The essence of our proof lies in bounding the size of $\widehat{\Delta}_A$. To this end, we first unpack $\widehat{\Delta}_A$. A key observation, to be substantiated in what follows, is that we may write $S_0^{+} = S_0^T + \Delta_{Sp}$,
where $\left\|\Delta_{Sp}\right\|_2$ is small (to be quantified in what follows). When we substitute this quantity into the definition of $\widehat{\Delta}_X$ in (\ref{eq:widehatdeltaX}) and expand the terms in $\widehat{\Delta}_A$, we obtain:
\begin{equation}
\begin{split}
\widehat{\Delta}_A &= Q D \Delta_{L} D^{-1} Q^{+} +  Q D S_0^T P \Delta_{Sp}^T D^{-1} Q^{+} + Q D \Delta_1 S_0 D^{-1} Q^{+} + Q D \Delta_1 \Delta_{Sp}^T D^{-1} Q^{+}.
\end{split}
\end{equation}
It is now relatively straightforward to bound the size of $\widehat{\Delta}_A$: we bound each term individually by bounding the factors therein. The most involved part of this argument comes from bounding the size of $\Delta_{Sp}$, as we will do next. Then, we will state a bound on the size of $\widehat{\Delta}_A$. Given the bound on $\widehat{\Delta}_A$, we will appeal to results from perturbation theory to bound the deviation of the eigenvectors $\widehat{\qq}_i$ of $\widehat{A}$ from $\qq_i$. 

\subsection{Bounding \texorpdfstring{$\Delta_{Sp}$}{the Pseudoinverse Perturbation}}

We now bound the size of $\Delta_{Sp}$. We proceed in three steps, separated into lemmas. Through our lemmas, we characterize the singular vectors and values of $S_0$, so that we may understand the pseudoinverse $S_0^+$. 

\begin{lemma}[The right singular vectors of $S_0$]
The right singular vectors of $S_0$ are, up to a bounded perturbation, the columns of the $k \times k$ identity matrix, $\Ii_k$, with the $j^{th}$ column denoted by $\ee_{j, k}$. 
\end{lemma}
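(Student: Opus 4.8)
The right singular vectors of $S_0 := S_{(0)}$ are exactly the eigenvectors of the $k \times k$ Gram matrix $S_0^T S_0$ (once $S_0$ is known to have full column rank, which will drop out of the estimate below), so the plan is to show that $S_0^T S_0$ differs from $\Ii_k$ by a perturbation of controlled operator norm and then invoke symmetric eigenvector perturbation. For the computation: writing $\vv \in \RR^k$ for the transpose of the last row of $S$, deleting that row to form $S_0$ gives $S_0^T S_0 = S^T S - \vv \vv^T$. Since the columns of $S$ have unit norm, $S^T S = \Ii_k + E$ with $E_{ii} = 0$ and, by the theorem's hypothesis, $|E_{ij}| = |\sss_i^T \sss_j| = O(f(n))$ for $i \neq j$, so $\|E\|_2 \le \|E\|_F = O(k f(n))$. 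By the coherence bound (\ref{eq:S_coherence}) every entry of $\vv$ is $O(n^{-\alpha})$, so $\|\vv \vv^T\|_2 = \|\vv\|_2^2 = O(k n^{-2\alpha})$. Hence
\[
S_0^T S_0 = \Ii_k + E', \qquad \|E'\|_2 = O\!\left(k f(n) + k n^{-2\alpha}\right) = o(1),
\]
which also shows $S_0^T S_0$ is invertible and $S_0$ has full column rank for $n$ large.

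With the Gram-matrix estimate in hand, the right singular vector matrix $V_0$ of $S_0$ is an orthogonal matrix diagonalizing $\Ii_k + E'$; since the unperturbed matrix $\Ii_k$ is trivially diagonalized by $\Ii_k$, a standard $\sin\Theta$/Davis--Kahan argument gives that, under the ordering and sign convention already adopted, each right singular vector equals $\ee_{j,k}$ up to a perturbation controlled by $\|E'\|_2$, which is the assertion of the lemma. It is worth recording for the sequel that this same estimate is what ultimately bounds $\Delta_{Sp}$: from $S_0^+ = (S_0^T S_0)^{-1} S_0^T = (\Ii_k + E')^{-1} S_0^T$ and a Neumann series, $S_0^+ - S_0^T = \big((\Ii_k + E')^{-1} - \Ii_k\big) S_0^T$, so $\|\Delta_{Sp}\|_2 = O\big(\|E'\|_2 \, \sigma_1(S_0)\big) = O(\|E'\|_2)$ once the companion lemma supplies $\sigma_1(S_0) = 1 + o(1)$.

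The main obstacle is the degeneracy of the unperturbed spectrum: $\Ii_k$ has the single eigenvalue $1$ with multiplicity $k$, so absent a spectral gap one cannot locate the columns of $V_0$ more precisely than "up to a bounded rotation" — when the eigenvalues of $E'$ are not separated, the eigenvectors of $\Ii_k + E'$ need not be near the coordinate axes. This is exactly why the statement is phrased with "up to a bounded perturbation", and the resolution is that nothing downstream needs the individual right singular vectors at finer resolution: only the near-identity Gram matrix — equivalently the estimate $\sigma_i(S_0) = 1 + O\!\left(k f(n) + k n^{-2\alpha}\right)$ proved in the next lemma — is used to bound $\Delta_{Sp}$, hence $\widehat{\Delta}_X$ and $\widehat{\Delta}_A$. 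A secondary, purely bookkeeping issue is tracking the polynomial-in-$k$ factors through the operator-norm bounds so they line up with the $k$-powers in (\ref{eq:generalboundshift}) and (\ref{eq:evalboundshift}).
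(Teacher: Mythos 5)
Your Gram-matrix computation coincides with the paper's: the paper likewise establishes $S_0^T S_0 = \Ii_k + \Delta_V$ with diagonal entries within $O(n^{-\alpha})$ of $1$ and off-diagonal entries of size $O(f(n))$, concluding $\|\Delta_V\|_F^2 = O(k^2 f(n)^2 + k n^{-2\alpha})$; your rank-one-update decomposition $S_0^T S_0 = S^T S - \vv \vv^T$ is a tidier way to arrive at an estimate of the same order. Where you genuinely diverge is in what you do with that estimate. The paper passes from ``$\ee_{j,k}$ has small residual as an approximate eigenvector of $S_0^T S_0$'' to ``the right singular vectors are $\Ii_k + \Delta_V$,'' and then carries that eigenvector matrix through the next two lemmas to assemble $S_0^{+} = (\Ii + \Delta_V)(\Ii + \Delta_N')(S_0 + \Delta_U)^T$. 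You correctly observe that this step is not justified as written: the unperturbed matrix $\Ii_k$ has a fully degenerate spectrum, so a small residual does not force the actual eigenvectors of $\Ii_k + E'$ near the coordinate axes -- an arbitrarily small symmetric perturbation with separated eigenvalues can pin them at any orientation. Your resolution -- that the only downstream consumer of this lemma is $\Delta_{Sp} = S_0^{+} - S_0^T$, which is bounded directly from $S_0^{+} = (S_0^T S_0)^{-1} S_0^T = (\Ii_k + E')^{-1} S_0^T$ via a Neumann series, with no reference to individual singular vectors -- is cleaner and repairs the gap, since the basis ambiguity inside the degenerate eigenspace cancels in the product forming the pseudoinverse. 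One internal inconsistency to fix: your first paragraph invokes a $\sin\Theta$/Davis--Kahan argument to claim each singular vector is within $O(\|E'\|_2)$ of $\ee_{j,k}$, and your second paragraph then (rightly) retracts that claim; delete the Davis--Kahan sentence, as the retraction is the correct position and the two cannot both stand. With that edit, your route proves everything the paper actually uses from this lemma, and does so more rigorously than the paper's own argument.
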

\begin{proof}
{$S_0^T S_0$ is a $k \times k$ matrix with diagonal entries between $1 - c_1 n^{-\alpha}$ and $1$ for some small, positive constant $c_1$ ($c_1 n^{-\alpha}$ is necessarily smaller than $1$); and off-diagonal entries bounded in size by $O(f(n))$ (recall (\ref{eq:S_coherence})).} I.e., $S_0^T S_0 = \Ii_k + \Delta_V$, $\|\Delta_V\|_F^2 = O\left(k^2 f(n)^2 + k n^{-2\alpha}\right)$.
Then, the eigenvectors of $S_0^T S_0$ are the columns of the identity matrix, up to a perturbation $\Delta_V$: $\Ii_k + \Delta_V$. 

To see that $\ee_{j, k}$ is almost an eigenvector of $S_0^H S_0$: 
$\left\|\ee_{j, k} - S_0^T S_0 e_{j, k}\right\|_2^2 = O\left(k f(n)^2 + n^{-2 \alpha}\right)$.
Hence, $\|\Delta_V\|_F^2 = O\left(k^2 f(n)^2 + k n^{-2 \alpha}\right)$.
\end{proof}

Before considering the left singular vectors and singular values, we need the following fact.
\begin{lemma} \label{lem:frac}
For $a > 0$ and $a \neq 1$, there exists a constant $b(a)$ such that 
$\frac{1}{1 - a} \leq 1 + b(a) \times a$. 
Choosing $b(a) \geq \frac{1}{1 - a}$ is sufficient. 
\end{lemma}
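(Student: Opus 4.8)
The plan is to reduce the whole statement to the elementary identity $1 + \dfrac{a}{1-a} = \dfrac{1}{1-a}$, which holds for every $a \neq 1$ (so that the quantities are well defined). First I would take the hypothesis $b(a) \geq \dfrac{1}{1-a}$ and multiply both sides by $a$; since $a > 0$ this multiplication is sign-preserving, giving $b(a)\,a \geq \dfrac{a}{1-a}$. Adding $1$ to both sides yields $1 + b(a)\,a \geq 1 + \dfrac{a}{1-a}$, and substituting the identity above on the right-hand side collapses it to exactly $\dfrac{1}{1-a}$, which is the asserted inequality. For the existence clause it is enough to display one admissible choice, for instance $b(a) = \dfrac{1}{1-a}$ (which in fact attains equality), so the claim is not vacuous.

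The only point worth flagging is that no case split on $a < 1$ versus $a > 1$ is required: the identity and the preservation of the inequality under multiplication by $a > 0$ are valid uniformly, and when $a > 1$ the right-hand side $\dfrac{1}{1-a}$ is negative, so the bound $1 + b(a)\,a \geq \dfrac{1}{1-a}$ merely becomes loose rather than failing. There is no genuine obstacle here; this is a bookkeeping lemma. In the application it will be invoked with $a = c_1 n^{-\alpha} \to 0$, in which regime the admissible constant satisfies $b(a) \to 1$, so that $\dfrac{1}{1-a} \leq 1 + [1 + o(1)]\,a$ — and it is precisely this linearized form that will be used to control the pseudoinverse perturbation $\Delta_{Sp}$ and, ultimately, $\widehat{\Delta}_A$.
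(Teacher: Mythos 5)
Your proof is correct: the identity $1 + \frac{a}{1-a} = \frac{1}{1-a}$ together with sign-preserving multiplication by $a > 0$ is exactly the intended one-line verification, and your observation that no case split on $a \lessgtr 1$ is needed is accurate. The paper states this lemma without proof, treating it as elementary bookkeeping, and your argument (including the remark on how it is invoked with $a = c_1 n^{-\alpha} \to 0$) supplies precisely what was omitted.
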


\begin{lemma}[The left singular vectors and the singular values of $S_0$]
The left singular vectors of $S_0$ are approximately the columns of $S_0$, and the non-zero singular values are approximately $1$. 
\end{lemma}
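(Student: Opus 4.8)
The plan is to work from the thin singular value decomposition $S_0 = U \Sigma V^T$, with $U$ an $(n-1) \times k$ matrix of orthonormal columns, $\Sigma = \mathrm{diag}(\sigma_1, \dots, \sigma_k)$, and $V$ a $k \times k$ orthogonal matrix, and to feed in the preceding lemma, which gives $S_0^T S_0 = \Ii_k + \Delta_V$ with $\|\Delta_V\|_F = O\!\left(k f(n) + \sqrt{k}\,n^{-\alpha}\right)$ and identifies the columns of $V$ with $\ee_{1,k}, \dots, \ee_{k,k}$ up to a perturbation of the same order. The singular values come first: applying Weyl's inequality to the symmetric matrix $S_0^T S_0 = \Ii_k + \Delta_V$ gives $|\sigma_i^2 - 1| \le \|\Delta_V\|_2 \le \|\Delta_V\|_F$ for every $i$, and taking square roots---bounding $1 - \sqrt{1-x}$ and $\sqrt{1+x}-1$ by $O(x)$, where Lemma \ref{lem:frac} converts the denominator $1 - \|\Delta_V\|_2$ into $1 + O(\|\Delta_V\|_2)$---yields $|\sigma_i - 1| = O(\|\Delta_V\|_F)$, i.e.\ the non-zero singular values are within $O(k f(n) + \sqrt{k}\,n^{-\alpha})$ of $1$.

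For the left singular vectors I would write $U = S_0 V \Sigma^{-1}$ and substitute $V = \Ii_k + E_V$ and $\Sigma^{-1} = \Ii_k + E_\Sigma$, where $E_V$ is the right-singular-vector perturbation from the preceding lemma and $E_\Sigma = \mathrm{diag}(\sigma_i^{-1}-1)$ has operator norm $O(\|\Delta_V\|_F)$ by the previous step. Denoting by $\sss_j^{(0)}$ the $j$-th column of $S_0$ (the restriction of $\sss_j$ to its first $n-1$ entries, so that $\|\sss_j^{(0)}\|_2^2 = 1 - s_{j,n}^2 \in [1 - O(n^{-2\alpha}),\,1]$ by the coherence bound (\ref{eq:S_coherence})), expanding gives $\uu_j = \sigma_j^{-1}\bigl(\sss_j^{(0)} + S_0(\vv_j - \ee_{j,k})\bigr)$; since $\|S_0\|_2 = \sigma_1 = 1 + O(\|\Delta_V\|_F)$ and $\sigma_j^{-1} = 1 + O(\|\Delta_V\|_F)$, the correction terms are $O(\|E_V\|_2 + \|\Delta_V\|_F + n^{-\alpha})$, which is the sense in which the left singular vectors are ``approximately the columns of $S_0$.'' I would then immediately record the payoff used later: since $S_0^+ = V\Sigma^{-1}U^T$ and $S_0^T = V\Sigma U^T$, the perturbation $\Delta_{Sp} = S_0^+ - S_0^T$ satisfies $\|\Delta_{Sp}\|_2 = \|\Sigma^{-1} - \Sigma\|_2 = \max_i |1-\sigma_i^2|/\sigma_i = O(\|\Delta_V\|_F)$, again via Lemma \ref{lem:frac}.

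The main subtlety---and the place where care is required---is that the singular values of $S_0$ may be (nearly) degenerate, so that individual right singular vectors cannot be pinned down vector-by-vector by a Davis--Kahan-type argument, and a bad choice of orthogonal basis within a degenerate block would break the literal ``$U \approx S_0$'' claim. The resolution is that every quantity actually needed---$\Sigma \approx \Ii_k$, $S_0^+ \approx S_0^T$, and the fact that $U$ enters the rest of the argument only through products of the form $U U^T$ or $S_0^T(\cdots)$---is invariant under that orthogonal ambiguity, so it suffices to argue at the level of $S_0^T S_0 = \Ii_k + \Delta_V$ (already in hand) together with Weyl's inequality for the singular values; no singular-value gap hypothesis is needed, and the column-wise statement holds for the SVD compatible with the choice of right singular vectors fixed in the preceding lemma.
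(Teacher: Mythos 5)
Your proof is correct, and while the construction of the left singular vectors is essentially the paper's ($U = S_0 V\Sigma^{-1}$ is exactly the paper's ``columns of $S_0$ times the right singular vectors, normalized,'' with your $E_\Sigma$ playing the role of the paper's $\Delta_N$), you diverge from the paper in two ways that are worth recording. First, you obtain the singular-value claim directly from Weyl's inequality applied to $S_0^T S_0 = \Ii_k + \Delta_V$, giving $|\sigma_i^2 - 1| \le \|\Delta_V\|_2$; the paper never states this bound explicitly and instead leaves the singular values implicit in the column-norm estimates of $S_0 V$. Your route is cleaner and makes the ``non-zero singular values are approximately $1$'' half of the lemma an actual displayed inequality rather than a by-product. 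Second, and more substantively, you correctly identify that the singular values of $S_0$ are all clustered near $1$, so the spectrum of $S_0^T S_0$ is nearly degenerate and the preceding lemma's residual argument ($\ee_{j,k}$ is ``almost an eigenvector'') does not by itself force the actual right singular vectors to be close to $\ee_{j,k}$ --- in the exactly orthonormal case any orthogonal $V$ is admissible, and in the nearly degenerate case $V$ must diagonalize $\Delta_V$, which can be far from $\Ii_k$. The paper glosses over this. Your resolution --- that every quantity consumed downstream ($\Sigma \approx \Ii_k$ and $S_0^{+} - S_0^T = V(\Sigma^{-1}-\Sigma)U^T$, whose norm is $\max_i |1-\sigma_i^2|/\sigma_i$) is invariant under the orthogonal ambiguity and follows from Weyl alone --- is the right repair, and in fact shortcuts the paper's subsequent pseudoinverse lemma: you get $\|\Delta_{Sp}\|_F^2 = O(k^3 f(n)^2 + k^2 n^{-2\alpha})$ without controlling any singular vectors. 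The only caveat is that, as you note, the literal column-wise statement ``$U \approx S_0$'' should be read as holding for a suitably chosen SVD basis (or replaced by the basis-invariant statements), not for an arbitrary one; with that reading your argument is complete.
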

\begin{proof}
The left singular vectors of $S_0$ are found by normalizing the columns of $S_0$ times the right singular vectors. I.e., $S_0 \left[\Ii + \Delta_V\right],$ but normalized. The size of $S_0 \Delta_V$ can be bounded by $\left\|S_0 \Delta_V\right\|_F^2 = O\left(k^3 f(n)^2 + k^2 n^{-2 \alpha}\right)$, since $\|S_0\|_F^2 \leq \|S\|_F^2 = k$. {Moreover, the norms of individual columns are bounded above by $1$ and below by 
$${\small \sqrt{1 - c_2 (k f(n)^2 + n^{-2 \alpha})} \geq 1 - c_3 \left(k^{1/2} f(n) + n^{-\alpha}\right)},$$
where $c_2$ and $c_3$ are some small, positive constants. Using Lemma (\ref{lem:frac}) and assuming that $c_3 (k^{1/2} f(n)$ $+ n^{-\alpha})$ is bounded away from $1$, e.g., by $9/10$ (which will be true for large enough $n$), a normalized column of $S_0 + S_0 \Delta_V$ has norm $1 + c_3 (k^{1/2} f(n) + n^{-\alpha})$.}
Then, writing the normalization as multiplication by a diagonal matrix, we have 
$(S_0 + S_0 \Delta_V) (\Ii + \Delta_N) = S_0 + S_0 \Delta_V + S_0 \Delta_V \Delta_N$.
The norm of $\Delta_N$ is bounded by $\|\Delta_N\|_F^2 = O(k^2 f(n)^2 + k n^{-2 \alpha})$. 
Then, the norm of $S_0$ minus the error terms is: 
$$\left\|S_0 - S_0 \Delta_V - S_0 \Delta_V \Delta_N\right\|_F^2 = O\left(k^3 f(n)^2 + k^2 n^{-2 \alpha}\right).$$
\end{proof}

Now, we may combine the previous results to bound $\Delta_{Sp}$.
\begin{lemma}[The Pseudoinverse of $S_0$]
The pseudoinverse of $S_0$ is $S_0^{+} = S_0^T + \Delta_{Sp}$, where $\|\Delta_{Sp}\|_F$ is small.
\end{lemma}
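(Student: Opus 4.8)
The plan is to read off $\Delta_{Sp}$ directly from the thin singular value decomposition of $S_0$, using the characterizations established in the two preceding lemmas. Write $S_0 = U_0 \Sigma_0 V_0^T$, where $U_0 \in \RR^{(n-1) \times k}$ and $V_0 \in \RR^{k \times k}$ have orthonormal columns and $\Sigma_0 = \textrm{diag}\big(\sigma_1(S_0), \ldots, \sigma_k(S_0)\big)$; by the preceding lemma every $\sigma_i(S_0)$ is near $1$, so $S_0$ has full column rank $k$. Hence the Moore--Penrose pseudoinverse is $S_0^{+} = V_0 \Sigma_0^{-1} U_0^T$, whereas $S_0^T = V_0 \Sigma_0 U_0^T$ since $\Sigma_0$ is diagonal and therefore symmetric. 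Subtracting, $\Delta_{Sp} := S_0^{+} - S_0^T = V_0 \big(\Sigma_0^{-1} - \Sigma_0\big) U_0^T$, so that $S_0^{+} = S_0^T + \Delta_{Sp}$ as claimed, and because $U_0$ and $V_0$ have orthonormal columns, $\|\Delta_{Sp}\|_F = \big\|\Sigma_0^{-1} - \Sigma_0\big\|_F$. Thus the lemma reduces to a quantitative statement that every singular value of $S_0$ is close to $1$.

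Next I would import the singular-value estimate underlying the second lemma above. From the first lemma, $S_0^T S_0 = \Ii_k + \Delta_V$ with $\|\Delta_V\|_F^2 = O\!\left(k^2 f(n)^2 + k n^{-2\alpha}\right)$, hence $\|\Delta_V\|_2 \le \|\Delta_V\|_F = O\!\left(k f(n) + k^{1/2} n^{-\alpha}\right)$. The eigenvalues of the symmetric matrix $S_0^T S_0$ are exactly the $\sigma_i(S_0)^2$, so Weyl's inequality gives $\big|\sigma_i(S_0)^2 - 1\big| \le \|\Delta_V\|_2$ for every $i$. Writing $\sigma_i(S_0) = 1 + \epsilon_i$ and noting that for $n$ large enough one has $\|\Delta_V\|_2 < 1/2$ (since $f(n) \to 0$ and $\alpha > 0$), a first-order expansion of $\sqrt{1+x}$ yields $\max_i |\epsilon_i| = O\!\left(k f(n) + k^{1/2} n^{-\alpha}\right)$.

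Finally I would estimate each diagonal entry of $\Sigma_0^{-1} - \Sigma_0$. With $\sigma_i = 1 + \epsilon_i$ we have $\sigma_i^{-1} - \sigma_i = (1 - \sigma_i^2)/\sigma_i = -(2\epsilon_i + \epsilon_i^2)/(1 + \epsilon_i)$; applying Lemma \ref{lem:frac} to control the factor $1/(1+\epsilon_i)$ (legitimate once $|\epsilon_i|$ is bounded away from $1$, e.g.\ by $9/10$, which holds for large $n$) gives $\big|\sigma_i^{-1} - \sigma_i\big| \le c\,|\epsilon_i|$ for an absolute constant $c$. Summing over $i$, $\|\Delta_{Sp}\|_F^2 = \sum_{i=1}^k \big(\sigma_i^{-1} - \sigma_i\big)^2 \le c^2 k \max_i \epsilon_i^2 = O\!\left(k^3 f(n)^2 + k^2 n^{-2\alpha}\right)$, which tends to $0$ as $n \to \infty$ under the assumptions (\ref{eq:assumptions_general}); this is the asserted bound, of the same form as the other perturbation estimates in this proof. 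I expect no genuine obstacle here: the only care required is the routine bookkeeping of taking $n$ large enough that $\Delta_V$, and hence each $\epsilon_i$, is small enough for the square-root expansion and for Lemma \ref{lem:frac} to apply.
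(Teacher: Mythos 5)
Your proof is correct and reaches the same quantitative conclusion, $\|\Delta_{Sp}\|_F^2 = O\left(k^3 f(n)^2 + k^2 n^{-2\alpha}\right)$, but by a genuinely different and arguably cleaner route. The paper assembles the SVD of $S_0$ factor by factor, writing it as $(S_0 + \Delta_U)(\Ii + \Delta_N)(\Ii + \Delta_V)^T$ using the two preceding lemmas (left singular vectors approximately the columns of $S_0$, singular values approximately $1$, right singular vectors approximately the identity), then inverts each factor via Lemma \ref{lem:frac} and expands the product, so that $\Delta_{Sp}$ is a sum of cross terms involving all three perturbations. Your argument instead exploits the exact identity $S_0^{+} - S_0^T = V_0\left(\Sigma_0^{-1} - \Sigma_0\right)U_0^T$: the singular vectors cancel entirely, unitary invariance of the Frobenius norm reduces the lemma to a statement about singular values alone, and those are controlled directly by Weyl's inequality applied to $S_0^T S_0 = \Ii_k + \Delta_V$. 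This sidesteps the singular-vector perturbation estimates of the first two lemmas (you need only the Gram-matrix bound from the first), avoids the bookkeeping of products of small terms, and makes transparent that the only source of discrepancy between $S_0^{+}$ and $S_0^T$ is the deviation of the singular values from $1$. A small bonus your identity offers for free, though you did not use it: since $\Delta_V = S_0^T S_0 - \Ii_k$ is symmetric with eigenvalues $\sigma_i^2 - 1$, one has $\sum_{i}\left(\sigma_i^2 - 1\right)^2 = \|\Delta_V\|_F^2$, so that $\|\Delta_{Sp}\|_F^2 = \sum_i (1-\sigma_i^2)^2/\sigma_i^2 = O\left(\|\Delta_V\|_F^2\right) = O\left(k^2 f(n)^2 + k n^{-2\alpha}\right)$, a factor of $k$ sharper than the bound you (and the paper) state; you gave this away by passing to $k \max_i \epsilon_i^2$.
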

\begin{proof}
Writing the SVD of $S_0$ as 
$(S_0 + \Delta_U) (\Ii + \Delta_N) (\Ii + \Delta_V)^T$, applying Lemma \ref{lem:frac} to the individual elements of $\Ii + \Delta_N$ and noting that $\|\Delta_N'\|_F = \Theta(\|\Delta_N\|_F)$ yields that
the pseudoinverse is 
$(\Ii + \Delta_V) (\Ii + \Delta_N') (S_0 + \Delta_U)^T$.
Once again assuming that $f(n) \rightarrow 0$ and noting that $f(n) \leq 1$, 
\begin{equation}
\|\Delta_{Sp}\|_F^2 = O\left(k^3 f(n)^2 + k^2 n^{-2 \alpha}\right).
\end{equation}
\end{proof}

\subsection{Bounding the size of \texorpdfstring{$\widehat{\Delta}_A$}{the error in A}}

Now that we have computed the pseudoinverse of $S_0$, we may return to the main computation. Recall that we wrote
\begin{equation}\label{eq:deltawidehatAterms}
\begin{split}
\widehat{\Delta}_A &= Q D \Delta_{L} D^{-1} Q^{+} +  Q D S_0^T P \Delta_{Sp}^T D^{-1} Q^{+} + Q D \Delta_1 S_0 D^{-1} Q^{+} + Q D \Delta_1 \Delta_{Sp}^T D^{-1} Q^{+}.
\end{split}
\end{equation}
First, note that each factor of $Q$ and $Q^{\dagger}$ adds a factor of $k$ to the squared Frobenius norm. The pre- and post-multiplication by $D$ and $D^{-1}$ respectively adds a factor of $(d_1 / d_k)^2$. By assumption, $L = \left[S_0^T P S_0\right]$
is a $k \times k$ matrix with diagonal entries that are $\Theta(1)$ and off-diagonal entries that are bounded as $O(f(n))$, so that $\|\Delta_{L}\|_F^2 = O(k f^2(n))$.
Once again by assumption,
\begin{equation}\label{eq:delta1bound}
\|\Delta_1\|_F^2 = O(k n^{-2\alpha}),
\end{equation}
and $S_0$ and $S_0^H P$ each contribute factors of $k$ to the squared Frobenius norm. Then, we have 
\begin{equation} \label{eqn:deltaAbound}
\|\widehat{\Delta}_A\|_F^2 = O\left((d_1 / d_k)^2 k^6 \times [f(n)^2 + n^{-2 \alpha}]\right).
\end{equation}

\subsection{Eigenvectors and Eigenvalues}

We have written $\widehat{A}$ as $Q \Lambda Q^{\dagger} + \widehat{\Delta}_A$, and we know the size of $\widehat{\Delta}_A$. The next step is to compute the eigenvectors of $\widehat{A}$. Ideally, these are the columns of $Q$, notated by $\qq_j$ and estimated by $\widehat{\qq}_j$, which are stacked into $\widehat{Q}$. 

There are two basic propositions from the perturbation theory of eigenvalues and eigenvectors that we need to complete our analysis. First, we have the following proposition bounding the error in the eigenvalues as a consequence of \citet[Theorem~4.4]{demmel1997applied}:
\begin{proposition}
Let $\lambda_i$ be a simple eigenvalue of 
$A = Q \Lambda Q^{+}$,
where the columns of $Q$, denoted by $\qq_i$, are unit-norm, fixed, and linearly independent. Then, there is a eigenvalue $\widehat{\lambda}_i$ of the perturbed matrix
$\widehat{A} = A + \widehat{\Delta}_A$
such that 
$\left|\lambda_i - \widehat{\lambda}_j\right|^2 = O\left(\left\|\widehat{\Delta}_A\right\|_2^2\right)$.
\end{proposition}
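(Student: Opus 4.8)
The plan is to reduce the claim to the Bauer--Fike theorem for diagonalizable matrices (which is essentially the content of \citet[Theorem~4.4]{demmel1997applied}); the only real work is to (i) exhibit a genuine, invertible similarity that diagonalizes $A = Q\Lambda Q^{+}$, (ii) bound the condition number of that similarity by a constant independent of $n$ and $p$, and (iii) upgrade the one-sided Bauer--Fike bound to a statement that attaches to each $\lambda_i$ an eigenvalue $\widehat{\lambda}_i$ of $\widehat{A}$.

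For step (i) I would use that $|\lambda_k|>0$, so $\Lambda$ is invertible and, together with $Q^{+}Q=\Ii_k$, this gives $\ker A=\ker Q^{+}=(\operatorname{col}Q)^{\perp}$, a subspace of dimension $p-k$. Taking the columns of $Q_{\perp}\in\RR^{p\times(p-k)}$ to be an orthonormal basis of $(\operatorname{col}Q)^{\perp}$ and setting $\widetilde{Q}=\begin{bmatrix}Q & Q_{\perp}\end{bmatrix}$ (invertible), the identities $AQ_{\perp}=0$, $AQ=Q\Lambda$, and $\widetilde{Q}^{-1}Q=\begin{bmatrix}\Ii_k\\ 0\end{bmatrix}$ yield
\[
\widetilde{Q}^{-1}A\widetilde{Q}=\begin{bmatrix}\Lambda & 0\\ 0 & 0\end{bmatrix},
\]
so $A$ is diagonalizable with spectrum $\{\lambda_1,\dots,\lambda_k,0,\dots,0\}$ and $\qq_i$ is the eigenvector of the simple eigenvalue $\lambda_i$. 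For step (ii), the orthogonality $Q^{T}Q_{\perp}=0$ gives $\widetilde{Q}^{T}\widetilde{Q}=\begin{bmatrix}Q^{T}Q & 0\\ 0 & \Ii_{p-k}\end{bmatrix}$, so the singular values of $\widetilde{Q}$ are those of $Q$ together with $p-k$ copies of $1$; since the columns of $Q$ are unit-norm we have $\sigma_1(Q)\ge 1\ge\sigma_k(Q)$, hence $\kappa_2(\widetilde{Q})=\sigma_1(Q)/\sigma_k(Q)$, a finite constant by assumption~(\ref{eq:assumptions_general}). Bauer--Fike then gives, for every eigenvalue $\mu$ of $\widehat{A}=A+\widehat{\Delta}_A$,
\[
\min_{\nu\in\operatorname{spec}(A)}\lvert\mu-\nu\rvert\ \le\ \kappa_2(\widetilde{Q})\,\lVert\widehat{\Delta}_A\rVert_2 .
\]

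For step (iii) I would invoke the companion Gershgorin-type counting statement (equivalently, continuity of the spectrum along the homotopy $A+t\widehat{\Delta}_A$, $t\in[0,1]$): the eigenvalues of $\widehat{A}$ lie in the union $\mathcal{U}$ of the closed disks of radius $\kappa_2(\widetilde{Q})\lVert\widehat{\Delta}_A\rVert_2$ centered at the eigenvalues of $A$, and each connected component of $\mathcal{U}$ contains equally many eigenvalues of $A$ and of $\widehat{A}$. Since $\lambda_i$ is simple and the relevant spectral gap is bounded below while $\lVert\widehat{\Delta}_A\rVert_2\to 0$ by (\ref{eqn:deltaAbound}), for all large $n$ the disk about $\lambda_i$ forms its own component and therefore contains exactly one eigenvalue $\widehat{\lambda}_i$ of $\widehat{A}$; thus $\lvert\lambda_i-\widehat{\lambda}_i\rvert\le\kappa_2(\widetilde{Q})\lVert\widehat{\Delta}_A\rVert_2=O(\lVert\widehat{\Delta}_A\rVert_2)$, and squaring gives the claim.

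I expect step (iii) to be the main obstacle: plain Bauer--Fike bounds only the distance from each eigenvalue of $\widehat{A}$ to $\operatorname{spec}(A)$, not the converse, so the simplicity of $\lambda_i$ (equivalently, an asymptotic separation from the other $\lambda_j$ and from $0$, which is available because $|\lambda_k|>0$) together with the counting/continuity argument is precisely what produces a matched $\widehat{\lambda}_i$. A secondary point to be careful about is that $A$ has rank at most $k\le p$, so one must diagonalize with the full $p\times p$ matrix $\widetilde{Q}$ rather than $Q$ itself, and check that the $p-k$ spurious zero eigenvalues do not contaminate the estimate for the simple $\lambda_i$.
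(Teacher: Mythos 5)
Your proof is correct, but it follows a genuinely different route from the paper. The paper invokes the first-order perturbation expansion for a simple eigenvalue (\citet[Theorem~4.4]{demmel1997applied}), writing $\widehat{\lambda}_i = \lambda_i + \yy_i^H \widehat{\Delta}_A \qq_i / (\yy_i^H \qq_i) + O(\|\widehat{\Delta}_A\|_2^2)$ and then bounding the individual eigenvalue condition number $1/|\yy_i^H \qq_i|$ as a fixed, finite quantity because $\lambda_i$ is simple and the $\qq_i$ are linearly independent; the existence of the matched $\widehat{\lambda}_i$ is implicit in that expansion. You instead diagonalize $A$ globally via the bordered matrix $\widetilde{Q} = \begin{bmatrix} Q & Q_{\perp}\end{bmatrix}$, apply Bauer--Fike with the condition number $\kappa_2(\widetilde{Q}) = \sigma_1(Q)/\sigma_k(Q)$ (your observation that $\widetilde{Q}^T\widetilde{Q}$ is block diagonal, so the spurious zero eigenvalues cost nothing in the condition number, is the right way to handle the rank deficiency), and then supply the disk-counting/homotopy argument to convert the one-sided Bauer--Fike bound into a matched eigenvalue for each simple $\lambda_i$. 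Your version buys an explicit, non-asymptotic constant ($\sigma_1(Q)/\sigma_k(Q)$, controlled directly by assumption 2 of (\ref{eq:assumptions_general})) and makes the eigenvalue matching fully rigorous rather than implicit; its cost is that the bound is governed by the global condition number of the eigenvector basis rather than the potentially sharper per-eigenvalue quantity $1/|\yy_i^H\qq_i|$, and that the matching step genuinely requires the asymptotic separation of the $\lambda_i$ (via $\delta_L$ and $|\lambda_k|$ bounded away from zero) together with $\|\widehat{\Delta}_A\|_2 \to 0$, which you correctly flag and which the standing assumptions do provide. One cosmetic remark: the invertibility of $\Lambda$ is not actually needed for your similarity $\widetilde{Q}^{-1} A \widetilde{Q} = \mathrm{blkdiag}(\Lambda, 0)$, since $A Q_{\perp} = Q \Lambda (Q^T Q)^{-1} Q^T Q_{\perp} = 0$ holds regardless; you only use it to identify $\ker A$ exactly, which your argument does not require.
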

\begin{proof}
From \citet[Theorem~4.4]{demmel1997applied}, we have that 
$$\widehat{\lambda}_i = \lambda_i + \frac{\yy_{i}^H \widehat{\Delta}_A\qq_{i}}{\yy_{i}^H \qq_{i}} + O\left(\left\|\widehat{\Delta}_A \right\|_2^2\right),$$
where $\qq_i$ is the corresponding unit-norm right eigenvector to $\lambda_i$, and $\yy_i$ is the corresponding unit-norm left eigenvector. Hence, 
$$\left|\widehat{\lambda}_i - \lambda_i\right| = O\left(\frac{\yy_{i}^H \widehat{\Delta}_A\qq_{i}}{\yy_{i}^H \qq_{i}}\right).$$
Noting that $\lambda_i$ is simple and that the $\qq_i$ are linearly independent, we have that $\yy_{i}^H \qq_{i}$ is fixed and non-zero (see \cite[Chapter~2]{wilkinson1965algebraic} for a discussion of this quantity), and we obtain the desired result. 
\end{proof}

Then, we have the following proposition as a consequence of \citet[Theorem~2]{meyer1988}:
\begin{proposition} \label{thm:evecperturb}
Let $\lambda_i$ be a simple eigenvalue of 
$A = Q \Lambda Q^{+}$
where the columns of $Q$, denoted by $\qq_i$, are unit-norm, fixed, and linearly independent. Let $\qq_i$ be the corresponding unit-norm right eigenvector $\qq_{i}$ to $\lambda_i$, and $\widehat{\qq}_{i}$ is the estimated eigenvector from
$\widehat{A} = A + \widehat{\Delta}_A$.
Then, we have that
$$\|\qq_{i} - p_{i} \widehat{\qq}_{i}\|_2^2 = O\left(\frac{\left\|\widehat{\Delta}_A\right\|_2^2}{\delta_{L}^2}\right),$$
where 
$p_{i} = \textrm{sign}\left(\widehat{\qq}_{i}^T {\qq}_{i}\right) \textrm{ and } \delta_L = \min_{j \neq l} |\lambda_{l} - \lambda_{j}|$.
\end{proposition}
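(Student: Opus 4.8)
The plan is to derive the bound from the first-order eigenvector perturbation formula of \citet[Theorem~2]{meyer1988}, specialized to $A = Q\Lambda Q^{+}$, and then to estimate the terms that appear using only the fixed conditioning of $Q$. First I would record the spectral data of $A$: since the $\qq_j$ are linearly independent, $Q^{+}Q = \Ii_k$, so $AQ = Q\Lambda$ shows that the columns $\qq_j$ of $Q$ are right eigenvectors of $A$, while $Q^{+}A = \Lambda Q^{+}$ shows that the rows of $Q^{+}$, which I denote $\yy_j^{H}$, are the matching left eigenvectors; moreover $\yy_j^{H}\qq_l = (Q^{+}Q)_{jl} = \delta_{jl}$, so the right and left systems are bi-orthonormal, and since $\lambda_i$ is simple we have $\yy_i^{H}\qq_i = 1 \ne 0$. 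This places us squarely within the hypotheses of the Meyer--Stewart expansion (the separation of $\lambda_i$ from the rest of $\mathrm{spec}(A)$ — which, when $p>k$, includes the zero eigenspace — is a fixed positive quantity under the standing assumption $\liminf_n |[L_{\tau}]_{ii}| > 0$, and is bounded below by a fixed multiple of $\delta_L$).

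Next I would invoke the expansion itself: there is an eigenvector $\widetilde{\qq}_i$ of $\widehat{A} = A + \widehat{\Delta}_A$ of the form
\[
\widetilde{\qq}_i \;=\; \qq_i \;+\; \sum_{j \ne i}\frac{\yy_j^{H}\widehat{\Delta}_A\,\qq_i}{\lambda_i - \lambda_j}\,\qq_j \;+\; \bm{\rho}_i,
\qquad
\|\bm{\rho}_i\|_2 = O\!\left(\frac{\|\widehat{\Delta}_A\|_2^{2}}{\delta_L^{2}}\right),
\]
valid once $\|\widehat{\Delta}_A\|_2$ is small relative to $\delta_L = \min_{j \ne l}|\lambda_j - \lambda_l|$; this is exactly the group-inverse correction $-(A-\lambda_i\Ii)^{\#}\widehat{\Delta}_A\qq_i$ of \citet{meyer1988}. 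I bound the first-order term term by term:
\[
\Bigl\|\sum_{j \ne i}\tfrac{\yy_j^{H}\widehat{\Delta}_A\,\qq_i}{\lambda_i - \lambda_j}\,\qq_j\Bigr\|_2
\;\le\; \frac{1}{\delta_L}\sum_{j \ne i}\|\yy_j\|_2\,\|\widehat{\Delta}_A\|_2\,\|\qq_j\|_2
\;\le\; \frac{k-1}{\delta_L\,\sigma_k(Q)}\,\|\widehat{\Delta}_A\|_2 ,
\]
using $\|\qq_j\|_2 = 1$, $\|\yy_j\|_2 \le \|Q^{+}\|_2 = \sigma_k(Q)^{-1}$, and $|\lambda_i - \lambda_j| \ge \delta_L$. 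By the conditioning assumption in (\ref{eq:assumptions_general}), $\sigma_1(Q)/\sigma_k(Q)$ — hence $\sigma_k(Q)^{-1}$, since $\sigma_1(Q)\ge 1$ — is bounded independently of $n$ and $p$, so this term is $O\!\big(k\,\|\widehat{\Delta}_A\|_2/\delta_L\big)$ with an absolute constant. Combining with $\bm{\rho}_i$ and using that the right-hand side of (\ref{eqn:deltaAbound}) is $o(\delta_L)$ asymptotically (as $\delta_L$ is treated as fixed), we obtain $\widetilde{\qq}_i = \qq_i + \bm{\eta}_i$ with $\|\bm{\eta}_i\|_2 = O\!\big(\|\widehat{\Delta}_A\|_2/\delta_L\big)$, with the first-order term dominating.

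Finally I would pass from $\widetilde{\qq}_i$ to the unit-norm eigenvector $\widehat{\qq}_i = \widetilde{\qq}_i/\|\widetilde{\qq}_i\|_2$ and the sign $p_i = \textrm{sign}(\widehat{\qq}_i^{T}\qq_i)$ of (\ref{eq:q_sign}). From $\widetilde{\qq}_i = \qq_i + \bm{\eta}_i$ and $\|\qq_i\|_2 = 1$ the triangle inequality gives $\big|\,\|\widetilde{\qq}_i\|_2 - 1\,\big| \le \|\bm{\eta}_i\|_2$, so renormalization moves the vector by a further $O(\|\bm{\eta}_i\|_2)$, and the sign choice at most flips it to the correct orientation; hence $\|\qq_i - p_i\widehat{\qq}_i\|_2 = O\!\big(\|\widehat{\Delta}_A\|_2/\delta_L\big)$, and squaring yields the claimed $O\!\big(\|\widehat{\Delta}_A\|_2^{2}/\delta_L^{2}\big)$. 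The main obstacle is not a single calculation but the bookkeeping: verifying that every hidden constant — the sizes of the left eigenvectors $\yy_j$, the radius within which the Meyer--Stewart expansion is valid, and the loss from renormalization — depends only on $k$ and the fixed ratio $\sigma_1(Q)/\sigma_k(Q)$, never on $n$ or $p$, and that the quadratic remainder $\bm{\rho}_i$ is genuinely dominated by the first-order term in the regime $\|\widehat{\Delta}_A\|_2 = o(\delta_L)$ guaranteed by (\ref{eqn:deltaAbound}). An alternative route that sidesteps writing $\bm{\rho}_i$ explicitly is to estimate the perturbed spectral projector directly: a resolvent contour integral over a circle of radius $\delta_L/2$ about $\lambda_i$ gives $\|\widehat{P}_i - P_i\|_2 = O(\|\widehat{\Delta}_A\|_2/\delta_L)$ for $P_i = \qq_i\yy_i^{H}$, and $\widehat{\qq}_i$ is the normalization of $\widehat{P}_i\qq_i$, yielding the same bound.
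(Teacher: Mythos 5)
Your proposal is correct and follows essentially the same route as the paper: both rest on the first-order eigenvector perturbation expansion of \citet[Theorem~2]{meyer1988}, bound the group/Drazin-inverse correction term by $1/\delta_L$, and use that $\yy_i^H \qq_i$ is bounded away from zero because $\lambda_i$ is simple and the $\qq_i$ are linearly independent. You are somewhat more careful than the paper on the supporting details --- making the bi-orthonormality $\yy_i^H\qq_i = 1$ explicit, bounding $\|\yy_j\|_2$ via $\sigma_k(Q)^{-1}$, noting the zero eigenspace when $p>k$, and handling the renormalization and sign --- but these are refinements of the same argument rather than a different one.
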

\begin{proof}
As a consequence of \citet[Theorem~2]{meyer1988}, we may write 
$$\widehat{\qq}_{i} = \qq_{i} + \frac{\left(\lambda_i \Ii_p - A\right)^{D} \widehat{\Delta}_A \qq_{i}}{\yy_{i}^H \qq_{i}} + O\left(\left\|\widehat{\Delta}_A \right\|_2^2\right),$$
where $\yy_{i}$ is the corresponding unit-norm left eigenvector for $\lambda_i$, and $A^D$ denotes the Drazin Inverse (also called the Group Inverse) of $A = Q \Lambda Q^+$. The discussion in the proof of \citet[Corollary~4]{meyer1988} indicates that we may bound $\left(\lambda_i \Ii_p - A\right)^{D}$ in Proposition \ref{thm:evecperturb} by 
$\left\|\left(\lambda_i \Ii_p - A\right)^{D}\right\|_2 \leq {1}/{\delta_{L}}$.
Noting that $\lambda_i$ is simple and that the $\qq_i$ are linearly independent, we have that $\yy_{i}^H \qq_{i}$ is fixed and non-zero; see \citet[Chapter~2]{wilkinson1965algebraic} for a discussion of this quantity. Hence, we may bound
\begin{equation}
\begin{split}
\left\|\frac{\left(\lambda_i \Ii_p - A\right)^{D} \widehat{\Delta}_A \qq_{i}}{\yy_{i}^H \qq_{i}} + O\left(\left\|\widehat{\Delta}_A \right\|_2^2\right)\right\|_2^2 = O\left(\frac{\left\|\widehat{\Delta}_A\right\|_2^2}{\delta_{L}^2}\right).
\end{split}
\end{equation}
\end{proof}

Proposition \ref{thm:evecperturb} provides a bound on the individual eigenvector errors. Summing over the eigenvector errors, we have that 
$$\sum_{i = 1}^k \|\qq_{i} - p_{i} \widehat{\qq}_{i}\|_2^2 = O\left(k \frac{\left\|\widehat{\Delta}_A\right\|_2^2}{\delta_{L}^2}\right).$$
Noting that $\left\|\widehat{\Delta}_A\right\|_2^2 \leq \left\|\widehat{\Delta}_A\right\|_F^2$, we may substitute our bound from (\ref{eqn:deltaAbound}) to complete the proof.

\section{Bridging Corollary \ref{thm:cos} and Theorem \ref{thm:shift} with $\tau = 1$}

When $C$ is a matrix of cosines, we may bridge the gap as follows. To apply Theorem \ref{thm:shift} to a matrix $C$ with columns $\cc_i$ of the form
\begin{equation}\label{eq:citform}
c_{it} = \cos \left( \omega_i t + \phi_i\right),
\end{equation}
we need to show that $L_{ii}$ does not tend to zero, that $L_{ij}$ does tend to zero for $i \neq j$, and that size of the elements of $S$ is bounded. Moreover, we need bounds on the convergence of the $L_{ij}$ and the elements of $S$. Recall that $L$ was defined in (\ref{eq:Ltau}), and is the matrix of circular inner products of the $\sss_i$, where the $\sss_i$, defined in (\ref{eq:qs}), are the normalized $\cc_i$ and form the columns of the matrix $S$. 

To tackle these three tasks, we require the following two identities governing sums of products of cosines:
\begin{equation} \label{eq:cos_prod}
\begin{split}
\sum_{t = 1}^n &\cos\left(\omega_1 t + \phi_1\right) \times \cos\left(\omega_2 t + \phi_2\right)  
= \frac{1}{2\left(\cos \omega_1 - \cos \omega_2\right)} \biggl(\cos\left(\omega_1 [n + 1] + \phi_1\right) \cos\left(\omega_2 n + \phi_2\right) \\ 
&- \cos\left(\omega_2 [n + 1] + \phi_2\right) \cos\left(\omega_1 n + \phi_1\right) 
- \cos \phi_2 \cos\left(\omega_1 + \phi_1\right) + \cos \phi_1 \cos\left(\omega_2 + \phi_2\right)\biggr),
\end{split}
\end{equation}
when $\omega_1 \neq \omega_2$, and
\begin{equation} \label{eq:cos_sqsum}
\sum_{t = 1}^n \cos^2 \left(\omega_1 t + \phi_1\right) = \frac{n}{2} + \frac{1}{2} \frac{\sin \left(\omega_1 n\right)}{\sin \omega_1} \cos \left(\omega_1 [n + 1] + 2 \phi_1\right).
\end{equation}

We first consider the simplest of the three tasks: the bound on the size of $S_{ij}$. Since the $\cc_i$ have entries of the form (\ref{eq:citform}), applying (\ref{eq:cos_sqsum}), we have that 
\begin{equation} \label{eq:c_norm}
\left\|\cc_i\right\|_2^2 = \frac{n}{2} + \frac{1}{2} \frac{\sin \left(\omega_i n\right)}{\sin \omega_i} \cos \left(\omega_i [n + 1] + 2 \phi_i\right).
\end{equation}
Note that if $\omega_i$ is not $0$ or $\pi$, (\ref{eq:c_norm}) behaves like $\Theta(n)$. If $\omega_i$ is $0$ or $\pi$, (\ref{eq:c_norm}) is equal to $n \cos^2 \phi_1$, which is also $\Theta(n)$: if $\cos^2 \phi_i = 0$ and $\omega_i = 0$ or $\pi$, $\cc_i$ is identically zero, and not part of a linearly independent set of vectors. Hence, the square of the norm of each $\cc_i$ is $\Theta(n)$, and the elements of $\cc_i$ are bounded in size by $1$. It follows that the elements of $S$ cannot be larger than $O(1 / \sqrt{n})$, or that $\alpha = 1/2$. 

Next, we consider the bound for $L_{ij}$ for $i \neq j$. Assuming that $\omega_i \neq \omega_j$, we may bound the right-hand size of  (\ref{eq:cos_prod}) by  
\begin{equation} \label{eq:cos_cross}
\frac{2}{\left|\cos \omega_i - \cos \omega_j\right|}.
\end{equation} 
But (\ref{eq:cos_prod}) is exactly the inner product of $\cc_i$ and $\cc_j$, for $i \neq j$. Since the elements of $L_{ij}$ are the inner products of the $\sss_i$ with $\sss_j$, dividing (\ref{eq:cos_cross}) by the norm of each $\cc_i$ yields a bound on the size of $L_{ij}$. Since the norm of each $\cc_i$ is $\Theta(\sqrt{n})$, the size of $L_{ij}$ is bounded by 
$$\left|L_{ij}\right| = O\left(\frac{1}{\sqrt{n}} \cdot \frac{1}{\left|\cos \omega_i - \cos\omega_j\right|}\right).$$
Taking the maximum over $i$ and $j$ yields that 
$\left|L_{ij}\right| = O\left(\frac{1}{\sqrt{n}} \cdot \frac{1}{\delta_{L}}\right)$,
where 
{\small $\delta_L = \min_{i \neq j} \left|\cos \omega_i - \cos\omega_j\right|$}.
Hence, we have that 
$f(n) = \frac{1}{\sqrt{n}} \frac{1}{\delta_L}$.
Note that $f(n)$ in the corollary contains a factor of $\delta_L$: this is the origin of the $\delta_L^4$ dependence, relative to Theorem \ref{thm:shift}, which has a $\delta_L^2$ dependence. 

Finally, we characterize the elements $L_{ii}$. The third and final identity we need is a version of (\ref{eq:cos_prod}) with $\omega_1 = \omega_2$ and $\phi_2 = \phi_1 + \omega_1$: 
\begin{equation} \label{eq:cos_lag1}
\begin{split}
\sum_{t = 1}^n \cos\left(\omega_1 t + \phi_1\right) \times \cos\left(\omega_1 [t + 1] + \phi_1\right) = \frac{n}{2} \cos \omega_1 + \frac{1}{2} \frac{\sin \left(\omega_1 n\right)}{\sin \omega_1} \cos \left(\omega_1 [n + 1] + 2 \phi_1\right).
\end{split}
\end{equation}
Unless $\omega_1$ is $\pi / 2$, $L_{ii}$ will not have limit $0$. For $\omega_1 \neq \pi / 2$, (\ref{eq:cos_lag1}) is $\Theta(n)$. Dividing by (\ref{eq:cos_sqsum}) yields that $L_{ii}$ is the ratio of two $\Theta(n)$ quantities: for large $n$, the mixed sine-cosine terms in both equations are negligible, so that $L_{ii}$ has limit $\cos \omega_i$.

Combining these steps, we obtain the result of Corollary (\ref{thm:cos}) from Theorem (\ref{thm:shift}). 

Note that more generally, we may write a version of  (\ref{eq:cos_lag1}) for larger lags $\tau$. That is, let $\omega_1 = \omega_2$, and $\phi_2 = \phi_1 + \tau \omega_1$, so that 
\begin{equation} \label{eq:cos_lagtau}
\begin{split}
\sum_{t = 1}^n \cos\left(\omega_1 t + \phi_1\right) \times \cos\left(\omega_1 [t + \tau] + \phi_1\right) = \frac{n}{2} \cos \left( \tau \omega_1 \right) + \frac{\sin \left(\omega_1 n\right)}{2 \sin \omega_1} \cos \left(\omega_1 [n + \tau + 1] + 2 \phi_1\right).
\end{split}
\end{equation}
That is, looking ahead to Theorem \ref{thm:shift}, unless $\omega_1 \tau$ is an odd multiple of $\pi / 2$, $L_{ii}(\tau)$ will not have limit $0$. Moreover, in the large $n$ limit, we would have $L_{ii}(\tau) = \cos \left(\tau \omega_1\right)$. 


\section{The proof of Theorem \ref{thm:shift} for $\tau > 1$}

We may define 
\begin{subequations}\label{eq:S0S1tau}
\begin{equation}
S_{(0)}^{\tau} = \begin{bmatrix} 
s_{1, 1} & s_{2, 1} & \cdots & s_{k, 1} \\
s_{1, 2} & s_{2, 2} & \cdots & s_{k, 2} \\
\vdots & \vdots & \cdots & \vdots \\
s_{1, n - \tau} & s_{2, n - \tau} & \cdots & s_{k, n - \tau}
\end{bmatrix}
\textrm{ and } 
S_{(1)}^{\tau} = \begin{bmatrix} 
s_{1, 1 + \tau} & s_{2, 1 + \tau} & \cdots & s_{k, 1 + \tau} \\
s_{1, 2 + \tau} & s_{2, 2 + \tau} & \cdots & s_{k, 2 + \tau} \\
\vdots & \vdots & \cdots & \vdots \\
s_{1, n} & s_{2, n} & \cdots & s_{k, n}
\end{bmatrix}.
\end{equation}
\end{subequations}
Then, we have that 
\begin{equation} \label{eq:X0X1_Stau}
X_{(0)}^{\tau} = Q W \left(S_{(0)}^{\tau}\right)^T \textrm{ and } X_{(1)} = Q W \left(S_{(1)}^{\tau}\right)^T.
\end{equation}
We make the key observation that 
\begin{equation} \label{eqn:deltatau}
\resizebox{\hsize}{!}{$
\left(S_{(1)}^{\tau}\right)^T = \begin{bmatrix} s_{1, 1 + \tau} &  \cdots & s_{1, n - \tau} & s_{1, 1} & \cdots & s_{1, \tau} \\  \vdots &  \cdots & \vdots & \vdots & \cdots  & \vdots \\ s_{k, 1 + \tau} &  \cdots & s_{k, n - \tau} & s_{k, 1} & \cdots & s_{k, \tau} \end{bmatrix} 
+ \begin{bmatrix} 0 & \cdots & 0 & s_{1, n - \tau + 1} - s_{1, 1} & \cdots & s_{1, n} - s_{1, \tau} \\ \vdots & \cdots & \vdots & \vdots & \cdots & \vdots \\ 0 & \cdots & 0 & s_{k, n - \tau + 1} - s_{k, 1} & \cdots & s_{k, n} - s_{k, \tau}\end{bmatrix}$,}
\end{equation} 
so that $\left(S_{(1)}^{\tau}\right)^T$ can be written as a $\tau$-times shift of $\left(S_{(0)}^{\tau}\right)^T$, plus an error term, $\Delta_{\tau}$, where $\Delta_{\tau}$ is the second term in (\ref{eqn:deltatau}). Mimicking the proof of Theorem \ref{thm:shift} for the $\tau = 1$ case and assuming that $\tau$ is sufficiently small reveals that the only change is that $\Delta_1$ is replaced with $\Delta_{\tau}$ in (\ref{eq:deltawidehatAterms}) and (\ref{eq:delta1bound}). Hence, we replace $n^{- 2\alpha}$ with $\tau  n^{-2 \alpha}$ in the final result.

\section{The Proof of Theorem \ref{thm:lagtautimeseries}}

In this section, we provide the details behind the results of Theorem \ref{thm:lagtautimeseries}. {Relative to the deterministic Theorems \ref{thm:shift}, Theorem \ref{thm:lagtautimeseries} differs only in that the quantities $L(\tau)$ and $d_i$ are random variables, where these quantities are defined in (\ref{eq:Ltau}) and (\ref{eq:D}) respectively. Hence, it is sufficient to demonstrate that $L_{\tau}$ and the $d_i$ are close to their expected values with high probability.} In what follows, we suppress the $\tau$ dependence of $L$ and other related quantities. 

\subsection{Conditions for the convergence of $L$ to $\EE L$}

We first consider the convergence of $L$. For convergence of $L$ to its expectation, we need a series of technical assumptions on the $\cc_i$. In stating these, we mimic the notation and state the conditions for Theorem 2 (equations (1) through (4)) in \citet{hong1982autocorrelation}. Essentially, at each time $t$, we have $p$ values: we have a $p$-dimensional time series. We will denote this series as $\widetilde{\cc}_t$, with
$\widetilde{\cc}_t = \begin{bmatrix} \cc_{1, t} & \cc_{2, t} & \ldots & \cc_{p, t}\end{bmatrix}^T$.
We require that each coordinate of $\widetilde{\cc}_t$ is individually an ergodic, wide-sense (covariance) stationary process with zero mean and finite variance. Formally, if $\beps_t \in \RR^p$ is the sequence of linear innovations, we are able to write  
$\widetilde{\cc}_{t} = \sum_{j = 0}^{\infty} \kappa_j \beps_{t - j}$,
where the $\kappa_j$ are $p \times p$ matrices. We require
\begin{subequations} \label{eqn:ts_conditions}
$\sum_{j = 0}^{\infty} \|\kappa_j\|_F^2 < \infty \textrm{ and } (\kappa_0)_{il} = 1$.
Moreover, if we define 
$K(z) = \sum_{j = 0}^{\infty} \kappa_j z^j$,
for $|z| < 1$, we require that the determinant of $K(z)$ is non-zero. 
We further require that if $\mathcal{F}_{t - 1}$ is the $\sigma$-algebra generated by $\beps_s$ for $s \leq t$, 
\begin{equation}
\EE \left[\beps_t \mid \mathcal{F}_{t - 1} \right]= \bzr_p,
\EE \left[\beps_t \beps_t^T \mid \mathcal{F}_{t - 1}\right] = \Sigma_{\epsilon},
 \textrm{ and }
  \EE \left[|(\beps_t)_i|^r \mid \mathcal{F}_{t - 1}\right] \leq \infty,
\end{equation}
\end{subequations}
for $r \geq 4$. Moreover, $\Sigma_{\epsilon}$ is a fixed, deterministic $p \times p$ matrix. 

\subsection{The convergence of $L$ to $\EE L$}

Given these many conditions, what can we say? We first consider all of the entries of $L$, diagonal and off-diagonal. Recall that the elements of $L$ are (up to a scaling of $1/n$ and some neglected terms from the circularity) the auto- and cross-correlations of the $\cc_i$ at the lag $\tau$. Let $\EE L_{ij}$ be the expected value of $L_{ij}$, for all $i$ and $j$. Applying Theorem 2 of \citet{hong1982autocorrelation} (a strengthening of Theorems 1 and 2 from \citet{hannan1974uniform}), we have that 
\begin{equation}\label{eqn:corr_bound}
\begin{split}
\max_{i, j} \max_{0 \leq \tau \leq n^{\frac{r}{2 (r - 2)}}} \left|L_{ij} - \EE L_{ij}\right| = o\left(\left(\tau \log n\right)^{2 / r} \left(\log \log n\right)^{(1 + \delta) 2 / r} n^{-1/2}\right),
\end{split}
\end{equation}
almost surely, for some $r \geq 4$ and $\delta > 0$. I.e., for any reasonably small lag, as $n$ grows (and $p$ is fixed), we expect the auto- and cross-correlations to converge to their expected values, with strongly bounded deviations. Indeed, for a threshold 
$\psi = \left(\tau \log n\right)^{2 / r} \left(\log \log n\right)^{(1 + \delta) 2 / r} n^{-1/2}$,
we have that 
\begin{equation}\label{eqn:corr_prob}
\begin{split}
\mathbb{P}\left[\max_{i, j} \max_{0 \leq \tau \leq n^{\frac{r}{2 (r - 2)}}} \left|L_{ij} - \EE L_{ij}\right| 
\geq \psi\right] = O\left(\left[\log n \left(\log \log n\right)^{1 + \delta}\right]^{-1}\right).
\end{split}
\end{equation}
Hence, as $n$ increases, the $L$ matrix is close to its expected value with high probability. 

There are two more quantities of interest. First, the separation $\delta_L$: from the discussion above, it follows that the empirical value of 
$\min_{i \neq j} \left| L_{ii} -  L_{jj}\right|$
is close to
$\delta_L = \min_{i \neq j} \left|\EE L_{ii} - \EE L_{jj}\right|$
with high probability. Moreover, the lag-$0$ auto-covariance provides values of $\EE d_1^2$ and $\EE d_k^2$. It follows that the $d_i$ are within $f(n) [1 + o(1)]$ of the $\EE d_i$. 

\subsection{The desired properties of $\EE L$}

We have established that $L$ and the other quantities has the desired convergence properties. Next, we discuss what properties we want $\EE L$ to have. Assume that we are operating at a reasonable lag $\tau$ (per the conditions above). Then, we consider the lag $\tau$ autocorrelations and cross-correlations of the $\cc_i$. We want the cross-correlations to be $0$ in expectation, and the autocorrelations to be non-zero. Note that we do not demand that the $\cc_i$ be independent or uncorrelated at every lag: just at the desired lag $\tau$. In this setup, the right-hand side of (\ref{eqn:corr_bound}) provides the bounding function $f(n)$ for the Theorem, as $\EE L_{ij} = 0$ for the off-diagonal elements. 

\subsection{Special Case: ARMA}

From Theorem 3 in \citet{hong1982autocorrelation}, in the special case of a stationary ARMA process, we may strengthen these bounds. That is, if the $\cc_i$ are drawn as contiguous realizations of an ARMA process, we may replace the right-hand side of (\ref{eqn:corr_bound}) with 
$o\left(\left(\log \log n / n\right)^{1/2}\right)$,
for non-negative lags $\tau$ such that 
$\tau = O\left(\left[\log n\right]^{a}\right)$
for some $a > 0$, and with no further work reuse the same probability bound as in (\ref{eqn:corr_prob}), with $\delta = 0$.

\subsection{Obtaining the Theorem Statements}

We have computed $f(n)$ and shown that with high probability $L$ is close to $\EE L$. We have further discussed the desired properties of $\EE L$, and shown that the $d_i$ are close to $\EE d_i$ and that $\min_{i \neq j} \left|L_{ii} - L_{jj}\right| \textrm{ is close to } \min_{i \neq j} \left|\EE L_{ii} - \EE L_{jj}\right|$.
Essentially, we have computed all of the quantities that appear in Theorem \ref{thm:shift} with relevant probabilities. In Theorem \ref{thm:shift}, we replace these quantities with their expectations, and obtain the desired result. 

\section{Proof of Theorem \ref{thm:S_bound}}

\begin{proof}
Recall that the proof of Theorem \ref{thm:shift} begins by bounding the perturbation of $\widehat{A}$ from $Q \Lambda Q^{+}$, as in written in (\ref{eq:Aperturbation}). Hence, we may note that 
$\widehat{A}^T = \left(Q^{+}\right)^T \Lambda Q^T + \widehat{\Delta}_A^T$,
and note that $\widehat{\Delta}_A^T$ has the same norm as $\widehat{\Delta}_A$. Following the rest of the proof to its conclusion reveals that we may estimate the left eigenvectors of $\widehat{A}$ with the same error bound as for the right. 

Assume that our estimate of the left eigenvectors $\left(\widehat{Q^{+}}\right)^T$ has normalized columns. Then, writing 
$\left(Q^+\right)^T = \left(\widehat{Q^{+}}\right)^T + \Delta_{Q^+}^T$,
we may write 
$\left(\widehat{Q^{+}} X\right)^T = S D + X^T \Delta_{Q^+}^T$.
Let $\beps_i$ denote the $i^{th}$ column of $X^T \Delta_{Q^+}^T$, so that 
$\widehat{\sss}_i = \frac{d_i \sss_i + \beps_i}{\left\|d_i \sss_i + \beps_i\right\|_2}$.
We may write 
$$\left\|\sss_i - \widehat{\sss}_i\right\|_2 = \left\|\sss_i \left(1 - \frac{d_i}{\left\|d_i \sss_i + \beps_i\right\|_2}\right) + \beps_i \frac{1}{\left\|d_i \sss_i + \beps_i\right\|_2}\right\|,$$
where we have implicitly assumed (without loss of generality) that $\sss_i^T \widehat{\sss}_i$ is positive. By the triangle inequality, we may write
$d_i - \|\beps_i\|_2 \leq \left\|d_i \sss_i + \beps_i\right\|_2 \leq d_i + \|\beps_i\|_2$.
Then, we have that 
\begin{equation}\label{eq:s_bound_1}
\left\|\sss_i - \widehat{\sss}_i\right\|_2 \leq \max_{\pm} \left\{\left|1 - \frac{d_i}{d_i \pm \|\beps_i\|_2}\right| + \frac{\|\beps_i\|_2}{\left|d_i \pm \|\beps_i\|_2\right|}\right\},
\end{equation}
where the maximum is taken over combinations of the $\pm$ signs in both terms. 

Before proceeding, we need the following lemma:
\begin{lemma}
Let $0 < y < x$, and assume that there is a constant $c > 0$ such that $x > 1/c$. Then, 
$$\left|1 - \frac{x}{x \pm y}\right| < c y \textrm{ and } \left|\frac{y}{x \pm y}\right| < c y.$$
\end{lemma}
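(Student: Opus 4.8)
The plan is to dispatch this by elementary algebra, since both claimed bounds reduce to estimating a single fraction for each choice of sign. First I would clear denominators in the left-hand expression: for either sign,
\[
1 - \frac{x}{x \pm y} = \frac{(x \pm y) - x}{x \pm y} = \frac{\pm y}{x \pm y},
\]
so that $\left|1 - \frac{x}{x \pm y}\right| = \frac{y}{|x \pm y|}$. Since $0 < y < x$, both $x + y$ and $x - y$ are strictly positive, so $|x \pm y| = x \pm y$, and the two claimed inequalities collapse to the single task of showing $\frac{y}{x+y} < cy$ and $\frac{y}{x-y} < cy$.

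The $+$ case is immediate: because $y > 0$ we have $x + y > x > 1/c$, hence $\frac{1}{x+y} < c$, and therefore $\frac{y}{x+y} < cy$; this settles both ``$+$'' inequalities in the statement.

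The $-$ case is where I expect the only real obstacle. We have $x - y > 0$ by hypothesis, but to conclude $\frac{y}{x-y} < cy$ we actually need the stronger lower bound $x - y > 1/c$, which does not follow from $x > 1/c$ and $y < x$ alone. In the setting where the lemma is invoked this is not a problem: there $x = d_i$ is bounded below by $d_k$ (hence away from $0$ uniformly in $n$), while $y = \|\beps_i\|_2$ is controlled by $\sqrt{k}\,d_1\,\epsilon_{d,v}$ and tends to $0$, so $x - y > 1/c$ holds for all large $n$; alternatively one strengthens the hypothesis to $y \le x/2$, which gives $x - y \ge x/2 > 1/(2c)$ and hence $\frac{y}{x-y} \le 2cy$, a harmless constant factor absorbed into the $O(\cdot)$ of Theorem~\ref{thm:S_bound}. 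With $x - y > 1/c$ in force, $\frac{1}{x-y} < c$, so $\frac{y}{x-y} < cy$ and likewise $\left|1 - \frac{x}{x-y}\right| = \frac{y}{x-y} < cy$, completing the remaining inequalities. Feeding this back into (\ref{eq:s_bound_1}) with $x = d_i$, $y = \|\beps_i\|_2$, and $c$ of order $1/d_k$ then yields $\|\sss_i - \widehat{\sss}_i\|_2 = O(\|\beps_i\|_2 / d_k)$, which is exactly what the rest of the proof of Theorem~\ref{thm:S_bound} needs.
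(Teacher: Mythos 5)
Your algebra is right, and your diagnosis of the minus-sign case is the key point: the paper states this lemma with no proof at all, and as written the lemma is in fact false. The identity $1 - \frac{x}{x \pm y} = \frac{\pm y}{x \pm y}$ does collapse both claims to bounding $\frac{y}{x+y}$ and $\frac{y}{x-y}$, and the plus case follows exactly as you say from $x + y > x > 1/c$. But for the minus case the hypotheses $0 < y < x$ and $x > 1/c$ do not imply $x - y > 1/c$: take $x = 1$, $c = 2$, $y = 0.99$, so that $\frac{y}{x-y} = 99$ while $c y = 1.98$, and both asserted inequalities fail. So no proof of the statement as given exists, and your proposed repair (require $y \le x/2$, giving $x - y \ge x/2 > 1/(2c)$ and a bound $2cy$; or, more directly, assume $x - y > 1/c$) is the correct one.

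Your reading of how the lemma is deployed is also accurate, but note that the paper's own application inherits the same gap rather than curing it: the lemma is invoked with $x = d_i$, $y = \left\|\beps_i\right\|_2$, and $c = 2/d_k$, under the hypothesis $\left\|\beps_i\right\|_2 < d_i$. That hypothesis guarantees $x - y > 0$ but not $x - y > 1/c = d_k/2$; one needs something like $\left\|\beps_i\right\|_2 \le d_i - d_k/2$, which holds if the sufficient condition at the end of the proof is tightened from $k d_1^2 \epsilon_{d,v}^2 < d_k^2$ to $k d_1^2 \epsilon_{d,v}^2 < d_k^2/4$. As you observe, this changes only absolute constants and leaves the $O(\cdot)$ conclusion of Theorem \ref{thm:S_bound} intact, but both the lemma statement and the hypothesis under which it is applied should be amended accordingly.
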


Continuing, if $\|\beps_i\|_2 < d_i$ for all $i = 1, 2, \ldots, k$, then by applying the lemma to each term in the right-hand side of (\ref{eq:s_bound_1}) with 
$c = {2}/{d_k}$,
we have that 
$\left\|\sss_i - \widehat{\sss}_i\right\|_2 \leq ({4}/{d_k}) \|\beps_i\|_2$.
Hence, summing over all $i$ yields that 
$$\sum_{i = 1}^k \left\|\sss_i - \widehat{\sss}_i\right\|_2^2 \leq \frac{4}{d_k^2} \sum_{i = 1}^k \|\beps_i\|_2^2 = \frac{16}{d_k^2} \|X^T \Delta_{Q^+}^T\|_F^2.$$
Recall that we have bounded $\|\Delta_{Q^+}^T\|_F^2$ by $\epsilon_{d, v}^2$, and $\|X^T\|_F^2$ by $k d_1^2$. It follows that
\begin{equation}
    \sum_{i = 1}^k \left\|\sss_i - \widehat{\sss}_i\right\|_2^2 \leq \frac{16 d_1^2}{d_k^2} k \epsilon_{d, v}^2.
\end{equation}

We have assumed that $\|\beps_i\|_2 < d_i$ for all $i = 1, 2, \ldots, k$; a sufficient condition is that 
$$\|X^T \Delta_{Q^+}^T\|_2^2 \leq \|X^T \Delta_{Q^+}^T\|_F^2 < d_k^2,$$ 
or that $k d_1^2 \epsilon_{d, v}^2 < d_k^2$.
\end{proof}

\section{Proof of Theorem \ref{thm:missing}} \label{sec:missing_proof}

Before proceeding, we remind the reader that the relevant notation and setup were presented in Section \ref{sec:missing}, and that (\ref{eq:missing_defs}) and (\ref{eq:assumptions_missing}) contain the required definitions and assumptions for the proof of the theorem. 

Following the approach taken in the proof of Theorem 2.4 in  \citet{nadakuditi2014optshrink}, we write 
\begin{equation}
\widetilde{X} = \EE_{M} \widetilde{X} + \left(\widetilde{X} - \EE \widetilde{X}\right) = q X + \left(\widetilde{X} - \EE \widetilde{X}\right) = q X + \Delta_S,
\end{equation}
where we define $\Delta_S =  \left(\widetilde{X} - \EE \widetilde{X}\right)$. We will first control the size of $\EE \left\| \Delta_S \right\|_2$. Then, noting that the tSVD-DMD algorithm performs DMD on a truncated SVD $\widehat{X}_k$ of $\widetilde{X}$, we will bound the error in estimating $X$ and $X^{+}$ from the low rank approximation of $\widetilde{X}$. That is, we will bound the deviation of the estimated singular vectors $\widehat{\uu}_i$ and $\widehat{\vv}_i$ and values $\widehat{\sigma}_i$ from the true values $\uu_i$, $\vv_i$, and $\sigma_i$, respectively, using the results from \citet{o2018random}. We will then compute the estimation error in $\left(\widehat{X}_{(0)}^{\tau}\right)^{+}$ and $\widehat{X}_{(1)}^{\tau}$, and hence write
$\widetilde{A} = \widehat{X}_{(1)}^{\tau} \left[\widehat{X}_{(0)}^{\tau}\right]^{+} = \widehat{A} + \Delta_{A}$.
We will bound the size of $\Delta_{A}$, and then bound the error in the eigenvectors of $\widetilde{A}$ from those of $\widehat{A}$. The final result will follow by an application of the triangle inequality. 

\subsection{Bounding $\EE \left\| \Delta_S \right\|_2$}

The first tool is a result of Lata\l{}a \citep{latala2005some}: 
\begin{equation}
\begin{split}
\EE \sigma_1(\Delta_S) \leq C\biggl[ \max_{i} \sqrt{\sum_{j} \EE (\Delta_S)_{i, j}^2} + \max_{j} \sqrt{\sum_{i} \EE (\Delta_S)_{i, j}^2} + \sqrt[4]{\sum_{i, j} \EE (\Delta_S)_{i, j}^4}\biggr],
\end{split}
\end{equation}
for some constant $C > 0$. 
We find that $\EE \sigma_1(\Delta_S) \leq g(n, p, k, q)$,
where 
\begin{equation}
\begin{split}
g(n, p, k, q) = O\biggl(\sqrt[4]{q (1 - q)} d_1 k \times \max\left\{ n^{1/4 - \alpha } p^{1/4 - \beta}, n^{-\alpha}, p^{-\beta}\right\}\biggr),
\end{split}
\end{equation}
Next, we need a bound on the probability that $\EE \left\| \Delta_S \right\|_2$ is close to $\left\| \Delta_S \right\|_2$. Noting that the first singular value is a $1$-Lipschitz, convex function, and that 
$\left|(\Delta_S)_{i, j}\right| = O\left(d_1 n^{-\alpha} p^{-\beta} k\right)$,
we may apply Talagrand's concentration inequality \citep[Theorem~2.1.13,~pp. 73]{tao2012topics}:
\begin{equation}
\Prob\left[\left|\sigma_1(\Delta_S) - \EE \sigma_1(\Delta_S)\right| > t \right] \leq 2 \exp\left(-c_0 t^2 \frac{n^{2\alpha} p^{2 \beta}}{d_1^2 k^2} \right) = 2 \exp\left(-c_0 \gamma t^2\right),
\end{equation}
for some constant $c_0 > 0$. 

\subsection{The Low Rank Approximation}

We apply the results from \citet{o2018random} to characterize the finite-sample performance of the low-rank approximation. Given the low-rank approximation that fills in the missing entries, we have an estimate $\widehat{X}$ of $q X$. Then, we have $\widehat{X}_0^{+}$ and $\widehat{X}_1$ that are passed into the DMD algorithm. Given $\widetilde{X}$, we will characterize how far $\widehat{X}$ is from $q X$. Then, (by assumptions on the density of $q X$) these bounds are close to those for $X_{(1)}$ and $X_{(0)}$, and we can apply them to write $\widehat{X}_{(0)}^{+}$ as $\frac{1}{q} X_{(0)}^{+} + \Delta_{S_0}$ and $\widehat{X}_{(1)}$ as $q X_{(1)} + \Delta_{S_1}$. Furthermore, we assume that we have oracular knowledge of the rank $k$. 

Before proceeding, note that we have controlled the size of the entries of $\Delta_{S}$, shown that its norm concentrates and is bounded, and bounded the expectation of the norm. Moreover, $\Delta_{S}$ is trivially zero mean and and random (from the randomness in masking the entries of $X$). Hence, we are able to apply the results from \citet{o2018random}.  

\subsubsection{The Singular Vectors of $\widetilde{X}$}

We have previously found that 
$$\Prob\left(\left|\sigma_1(\Delta_{S})\right| > \widetilde{t} \right) \leq 2 \exp\left(- c_0 \gamma (\widetilde{t} - g(n, p, k, q))^2\right).$$
Let $t = \widetilde{t} - g(n, p, k, q)$ for some $\widetilde{t}$. 

Recall that for two unit norm vectors $\xx$ and $\yy$, 
$\sin^2 \angle(\xx, \yy) = 1 - (\xx^T \yy)^2 \leq \epsilon^2$
means that if $\xx^T \yy \geq 0$, 
$$\|\xx - \yy\|_2^2 = 2\left(1 - \xx^T \yy\right) \leq 2 \left(1 - \sqrt{1 - \epsilon^2}\right) \leq 2 \epsilon^2.$$
Applying Corollary 20 from \citet{o2018random} and noting that $\|\Delta_S\|_2 \leq t$ with high probability, we have that 
\begin{equation}
\sin \angle(\vv_i, \widehat{\vv}_i) \leq 8 \sqrt{2} \frac{\sqrt{k}}{\delta_{\sigma, q}} \left[t (\sqrt{k} + 1) + t^2\right],
\end{equation}
with probability at least
\begin{equation}
\begin{split}
\left[1 - 24 \cdot 9^k \exp\left(-\gamma \frac{\delta_{\sigma, q}^2}{64}\right) - 8 \cdot 81^k \exp\left(-\gamma k \frac{t^2}{16}\right)\right] \cdot \left[1 - 2 \exp\left(- c_0 \gamma t^2 \right)\right].
\end{split}
\end{equation}

{Then, if $V$ contains the first $k$ right singular vectors of $X$, and assuming that $t \rightarrow 0$ and that $\delta_{\sigma, q} \nrightarrow 0$, we have that 
\begin{equation}
\left\|V - \widehat{V}\right\|_F = O\left(\frac{k^2 t}{\delta_{\sigma, q}}\right),
\end{equation}
with probability at least 
\begin{equation}
\begin{split}
1 - c_4 \left(9^k \exp\left(-\gamma \frac{\delta_{\sigma, q}^2}{64}\right)\right) - c_5\left(81^k \exp\left(-\gamma k \frac{t^2}{16}\right)\right) - c_6\left(\exp\left(- c_0 \gamma t^2 \right)\right),
\end{split}
\end{equation}
for some constants $c_4, c_5, c_6$. We have an identical result for $U$ and $\widehat{U}$. }

\subsubsection{The Singular Values of $\widetilde{X}$}

Applying Theorem 23 from \citet{o2018random}, we next have that 
$\widehat{\sigma}_j(\widetilde{X}) \geq \sigma_j(q X) - t$
with probability at least 
\begin{equation}
1 - 4 \cdot 9^j \exp\left(-c_0 \gamma \frac{t^2}{16}\right),
\end{equation}
and that 
\begin{equation}
\begin{split}
\widehat{\sigma}_j(\widetilde{X}) \leq \sigma_j(q X) + \sqrt{k} t + 2 \sqrt{j} \frac{t^2}{\sigma_j(q X)} + j \frac{t^3}{\left(\sigma_j (q X) \right)^2}, 
\end{split}
\label{eqn:sigma_upper}
\end{equation}
with probability at least 
\begin{equation}
\begin{split}
1 - 4 \cdot 81^k \exp\left(- c_0 \gamma \frac{t }{16}\right) - 2 \exp\left(- c_0 \gamma t^2\right).
 \end{split}
\end{equation}
It follows that 
\begin{equation}
\left|\widehat{\sigma}_i - \sigma_i\right| \leq t (\sqrt{k} + 1) + 2 \sqrt{j} \frac{t^2}{\sigma_j(q X)} + j \frac{t^3}{\left(\sigma_j (q X)\right)^2}
\end{equation}
with probability at least 
\begin{equation} \label{eq:sigmaupperprob}
\begin{split}
1 - 4 \cdot 81^k \exp\left(- c_0 \gamma \frac{t }{16}\right) - 2 \exp\left(- c_0 \gamma t^2\right) - 4 \cdot 9^j \exp\left(-c_0 \gamma \frac{t^2}{16}\right) .
 \end{split}
\end{equation}

\begin{lemma}
For positive scalars $a$, $x$, and $y$, 
$\frac{1}{x - y} \leq \frac{1}{x} + a y$
if $y > x$ or if 
$x \geq \sqrt{\frac{1}{a}} \textrm{ and } y \leq x - \frac{1}{a x}$.
Moreover, 
$\frac{1}{x + y} \geq \frac{1}{x} - a y$
if 
$x \geq \sqrt{\frac{1}{a}}, \textrm{ or if } 0 < x \leq \sqrt{\frac{1}{a}} \textrm{ and } y > \frac{1}{ax} - x$.
\end{lemma}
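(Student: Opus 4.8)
The plan is to reduce each of the two inequalities to a single scalar condition by clearing denominators, after first peeling off the trivial cases in which one side is automatically non-positive. Throughout, $a,x,y>0$, and the hypotheses will guarantee that whichever of $x-y$, $x+y$ appears in a denominator is positive wherever the claimed bound is not vacuous.

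\textbf{First inequality.} To prove $\frac{1}{x-y}\le \frac{1}{x}+ay$: if $y>x$ then $x-y<0$, so the left side is negative while $\frac{1}{x}+ay>0$, and there is nothing to do. In the other case the hypothesis $x\ge\sqrt{1/a}$ and $y\le x-\frac{1}{ax}$ forces $y<x$ (since $\frac{1}{ax}>0$), so $x-y>0$ and every quantity in sight is positive; multiplying through by $x(x-y)>0$ turns the claim into $x\le(1+axy)(x-y)$, which after expansion and cancelling $x$ becomes $0\le y\bigl(ax^2-axy-1\bigr)$, i.e. $ax(x-y)\ge1$. This is exactly the rearranged hypothesis $y\le x-\frac{1}{ax}$, so the bound holds.

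\textbf{Second inequality.} To prove $\frac{1}{x+y}\ge\frac{1}{x}-ay$: here $x+y>0$ always. If $1-axy\le0$, the right side $\frac{1-axy}{x}$ is non-positive and the left side positive, so the bound is immediate. Otherwise $1-axy>0$, and multiplying through by $x(x+y)>0$ turns the claim into $x\ge(1-axy)(x+y)$, which after expansion and cancelling $x$ becomes $0\ge y\bigl(1-ax(x+y)\bigr)$, i.e. $ax(x+y)\ge1$, i.e. $y\ge\frac{1}{ax}-x$. To finish I would check that each stated hypothesis implies this last inequality: if $x\ge\sqrt{1/a}$ then $\frac{1}{ax}\le\sqrt{1/a}\le x$, so $\frac{1}{ax}-x\le0<y$; and if instead $0<x\le\sqrt{1/a}$ the condition $y>\frac{1}{ax}-x$ has been assumed directly.

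The whole argument is elementary arithmetic, so there is no real obstacle; the only thing requiring care is the sign bookkeeping — confirming that the relevant denominators are positive so that cross-multiplication preserves the direction of the inequality, and disposing of the cases where one side is non-positive before doing any algebra.
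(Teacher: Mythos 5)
Your proposal is correct and complete. Note that the paper states this lemma without any proof at all (it appears inline in Appendix \ref{sec:missing_proof} and is immediately applied to convert the bounds on $\widehat{\sigma}_j$ into bounds on $1/\widehat{\sigma}_j$), so there is no authorial argument to compare against; your elementary verification fills in what the authors left implicit. The sign bookkeeping is handled properly: in the first inequality you correctly dispose of the vacuously true case $y>x$ (left side negative, right side positive) and observe that $y \leq x - \tfrac{1}{ax}$ forces $x-y>0$, so cross-multiplying by $x(x-y)$ reduces the claim to $ax(x-y)\geq 1$, which is precisely the hypothesis restated; in the second you correctly peel off the case $1-axy\leq 0$ and reduce the remainder to $y \geq \tfrac{1}{ax}-x$, then check that each of the two stated hypotheses implies it (using $x\geq\sqrt{1/a} \Rightarrow \tfrac{1}{ax}\leq x$ for the first). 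The only cosmetic remark is that the condition $x\geq\sqrt{1/a}$ in the first inequality plays no logical role in your derivation beyond making the case nonvacuous (no positive $y$ satisfies $y\leq x-\tfrac{1}{ax}$ when $x<\sqrt{1/a}$), which is consistent with how the lemma is stated.
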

Applying the lemma, we find that if $t \leq \frac{3}{4} \sigma_k(q X)$ (true for sufficiently large $n$ and $p$, by assumption), we may write 
$$\left|\frac{1}{\widehat{\sigma}_j} - \frac{1}{\sigma_j(q X)}\right| \leq \frac{4}{\sigma_k^2} \left[(\sqrt{k} + 1) t + 2 \sqrt{j} \frac{t^2}{\sigma_j} + j \frac{t^3}{\sigma_j^2}\right]$$
with probability at least (\ref{eq:sigmaupperprob}).

{Then, it follows that 
\begin{subequations}
\begin{equation}
\left\|\Sigma - \widehat{\Sigma}\right\|_F = O\left(k t\right)
\textrm{ and }
\left\|\Sigma^+ - \widehat{\Sigma^+}\right\|_F = O\left(\frac{k t}{\sigma_k^2}\right),
\end{equation}
with probability at least 
\begin{equation}
1 - c_7 \left(81^k \cdot k \cdot \exp\left(- c_0 \frac{\gamma t^2}{16}\right)\right),
\end{equation}
\end{subequations}
where $c_7$ is some positive constant. We have assumed that $\sigma_k \nrightarrow 0$ and that $t^2 \gamma \nrightarrow 0$.}

\subsubsection{The Error in $\widehat{X}$}

Finally, we may combine all of the above results and write the following where if $q X = U \Sigma V^T$ is the (thin) SVD of $q X$, $\widehat{X} = \left(U + \Delta_U\right) \left(\Sigma + \Delta_{\Sigma, q} \right) \left(V + \Delta_V\right)^T$. We may then write $\widehat{X} = qX + \Delta_{X}$, where 
\begin{equation} \label{eq:missingdeltaXfull}
 \Delta_{X} = U \Sigma \Delta_V^T + U \Delta_{\Sigma, q} V^T + U \Delta_{\Sigma, q} \Delta_V^T  
+ \Delta_U \Sigma V^T + \Delta_U \Sigma \Delta_V^T + \Delta_U \Delta_{\Sigma, q} V^T + \Delta_U \Delta_{\Sigma, q} \Delta_V^T.
\end{equation}
Then we may write $\widehat{X} = q X + \Delta_{X}$, where $\Delta_{X}$ is defined as all but the first term in (\ref{eq:missingdeltaXfull}). We now plug in our bounds for the sizes of the $\Delta$ terms, note that each $U$ and $V$ add factors of $\sqrt{k}$ to the Frobenius norm, and note that $\Sigma$ adds a factor bounded by $\sqrt{k} \sigma_1(q X)$. {Then, when $g$ is sufficiently small, we have that
\begin{equation} \label{eq:missingdeltaX}
\left\|\Delta_X\right\|_F = O\left(k^3 t \frac{\sigma_1(q X)}{\delta_{\sigma, q}}\right),
\end{equation}
with probability at least 
\begin{equation}
\begin{split}
1 - c_7\left(k \cdot 81^k \exp\left(-c_0 \gamma k t^2 / 16\right)\right) - c_8\left(k \cdot 9^k \exp\left(-c_0 \gamma k \delta_{\sigma, q} / 64\right)\right),
\end{split}
\end{equation}
where $c_8$ is another positive constant.} The result for $\widehat{X}^{+}$ is similar: we may expand $\widehat{X}^{+}$ as we did for $\widehat{X}$ in (\ref{eq:missingdeltaXfull}), and obtain that with the same probability, we have $\widehat{X}^{+} = X^{+} + \Delta_{X^{+}}$, where
\begin{equation}\label{eq:missingdeltaXpinv}
\left\|\Delta_{X^{+}}\right\|_F = O\left(k^3 t \frac{1}{\delta_{\sigma, q}}\right).
\end{equation}

\subsection{Using $\widehat{X}_k$ to estimate $\widehat{A}$}

Next, we consider the estimation of $\widehat{A}$ with 
$\widetilde{A} = \widehat{X}_{(1)}^{\tau} \left[\widehat{X}_{(0)}^{\tau}\right]^{+}$. 
That is, we estimate $\widehat{X}$, and take the sub-matrices $\widehat{X}_{(1)}^{\tau}$ and $\widehat{X}_{(0)}^{\tau}$ as inputs to DMD. Our previous bounds may be applied with $g(n, p, k, q)$ replaced with $\sqrt{\tau} g(n, p, k, q)$:
note that the sum of squares of the norms of $\tau$ columns of $X$ is bounded by $k d_1^2 \tau n^{-2 \alpha}$, and all of these factors except $\tau$ appear in $g(n, p, k, q)^2$. Writing $\widehat{X}_{(1)}^{\tau} = X_{(1)} + \Delta_{X_1}$ and $\left(\widehat{X}_{(0)}^{\tau}\right)^{+} = X_{(0)}^{+} + \Delta_{X_0^{+}}$, we may write $\widetilde{A} = \widehat{A} + \Delta_{A}$, where $\Delta_A$ is the sum of all but the first term in
\begin{equation}
\widetilde{A} = X_{(1)} X_{(0)}^{+} + \Delta_{X_1} X_{(0)}^{+}  + X_{(1)} \Delta_{X_0^{+}} + \Delta_{X_1} \Delta_{X_0^{+}}.
\end{equation}
Note that we have dropped the $\tau$ dependence for ease of reading. Each factor of $X_{(1)}$ adds $\sqrt{k} \times \sigma_1(q X_{(1)})$ to the Frobenius norm, and each factor of $X_{(0)}^{+}$ adds $\sqrt{k} / \sigma_k(q X_{(0)})$. Hence, we may write
\begin{equation} \label{eq:deltaA_part1}
\left\|\Delta_{A}\right\|_F = O\left(\frac{\sqrt{k}}{\sigma_k(q X_0)} \left\|\Delta_{X_1}\right\|_F + \sqrt{k} \sigma_1(q X_1) \left\|\Delta_{X_0^{+}}\right\|_F\right).
\end{equation}

Ideally, we would have (\ref{eq:deltaA_part1}) in terms of $X$. First, note that by the Cauchy Interlacing Theorem \citep{fisk2005very}, $\sigma_1(q X_{(1)}) \leq \sigma_1(q X)$. It follows that we may replace $X_{(1)}$ with $X$ without any further work. 

Since $X_{(0)}$ has the same singular values as a version of $X$ with the last $\tau$ columns set to $0$, we may replace $X_{(0)}$ with a perturbation of $X$, denoted by $\widetilde{X}_{(0)}$: $\widetilde{X}_{(0)} = X + \widetilde{\Delta}_{X_0}$, where 
$$\left\|\widetilde{\Delta}_{X_0}\right\|_F \leq \sqrt{k \tau} d_1 n^{-\alpha} \leq \sqrt{\frac{\tau}{\sqrt{q (1 - q)}}} \times g(n, p, k, q).$$
An application of the Weyl Inequality \citep[Theorem~4.3.1]{weyl1912asymptotische} yields that 
$$\frac{1}{\sigma_k(q X_{(0)})} = \frac{1}{\sigma_k(q \widetilde{X}_{(0)})}   \leq \frac{1}{\sigma_k(q X) - q \widetilde{\Delta}_{X_0}}.$$
By assumption, $\sigma_k(X)$ does not have limit $0$. Moreover, by assumption, the norm of $\widetilde{\Delta}_{X_0}$ does have limit zero. Hence, for sufficiently large $n$, we may write 
$$\frac{1}{\sigma_k(q X_{(0)})} \leq \frac{1}{\sigma_k(q X)} + \frac{1}{q} O\left(\left\|\widetilde{\Delta}_{X_0}\right\|_F\right) \leq \frac{1}{\sigma_k(q X)} + \frac{\sqrt{\tau}}{q} O(g(n, p, k, q)).$$

Now, let $t = a g(n, p, k, q)$ for some $a > 1$. Putting the previous work together, we find that 
\begin{equation}
\left\|\Delta_{A}\right\|_F = O\left(k^{7/2} a \sqrt{\tau} g(n, p, k, q) \frac{\sigma_1(q X)}{\delta_{\sigma, q}}\right).
\end{equation}
{This bound holds with probability at least 
\begin{equation}\label{eqn:A_prob}
\begin{split}
1 - c_7\left(k \cdot 81^k \exp\left(-\left(1 - \frac{1}{a}\right)^2 c_0 \gamma \frac{\left(\sqrt{\tau} g(n, p, k, q)\right)^2}{16}\right)\right) - c_8\left(k \cdot 9^k \exp\left(-c_0 \gamma \frac{\delta_{\sigma, q}}{64}\right)\right),
\end{split}
\end{equation}
for some constants $c_7$ and $c_8$.}

\subsection{The DMD Eigenvectors}

Finally, we have previously bounded the deviation of $\widehat{A} = X_{(1)} X_{(0)}^+$ from $Q \Lambda Q^+$. We have just bounded the deviation of $\widetilde{A}$ from $\widehat{A}$ due to missing data. We may combine the effects of missing data and the deterministic noiseless deviation bound via the triangle inequality. Then, we apply the the union bound over the $k$ eigenvectors. Let $\epsilon_{d, v}^2$ be the deterministic deviation of the $\qq_k$, i.e., the right-hand side of (\ref{eq:generalboundshift}). {Then, with probability at least
\begin{equation} \label{eq:missingprobproof}
\begin{split}
1 - c_7 \left(k^2 \cdot 81^k \exp\left(-\left(1 - \frac{1}{a}\right)^2 c_0 \gamma \frac{\tau \left(g(n, p, k, q)\right)^2}{16}\right)\right) - c_8\left(k^2 \cdot 9^k \exp\left(-c_0 \gamma \frac{\delta_{\sigma, q}}{64}\right)\right),
\end{split}
\end{equation}
\begin{equation}
\begin{split}
\sum_{i = 1}^k \left\|\widehat{\qq}_i - p_i \qq_i\right\|_2^2 = O\left(\frac{\tau}{q^2} a^2 \left(g(n, p, k, q) \right)^2\frac{\sigma_1^2(X)}{\delta_{\sigma}^2} \frac{k^8}{\delta_{L}^2} + \epsilon_{d, v}^2 \right),
\end{split}
\end{equation}
where we have adapted the final step in the proof of Theorem \ref{thm:shift}. }

Finally, let $\epsilon_{d, e}^2$ be the deterministic deviation of the $L_{ii}$, i.e., the right-hand side of (\ref{eq:evalboundshift}). Once again adapting the final step in the proof of Theorem \ref{thm:shift}, we have that for each $L_{ii}$, there is an eigenvalue of $\widetilde{A}$ such that 
\begin{equation}
\left|L_{ii} - \lambda_i\right|^2 = O\left(\frac{\tau}{q^2} a^2 \left(g(n, p, k, q) \right)^2\frac{\sigma_1^2(X)}{\delta_{\sigma}^2} {k^7} + \epsilon_{d, e}^2 \right),
\end{equation}
with probability at least (\ref{eq:missingprobproof}).

\bibliographystyle{plain}
\bibliography{DMD}

\begin{thebibliography}{10}

\bibitem{almeida2003misep}
{\sc L.~B. Almeida}, {\em {MISEP}--{L}inear and nonlinear {ICA} based on mutual
  information}, J. Mach. Learn. Res., 4 (2003), pp.~1297--1318.

\bibitem{amari1995recurrent}
{\sc S.-i. Amari, A.~Cichocki, and H.~H. Yang}, {\em Recurrent neural networks
  for blind separation of sources}, in Proc. Int. Symp. NOLTA., 1995.

\bibitem{anden2018multitaper}
{\sc J.~And{\'e}n and J.~L. Romero}, {\em Multitaper estimation on arbitrary
  domains}, arXiv preprint arXiv:1812.03225,  (2018).

\bibitem{babadi2014review}
{\sc B.~Babadi and E.~N. Brown}, {\em A review of multitaper spectral
  analysis}, IEEE Trans. Biomed. Eng., 61 (2014), pp.~1555--1564.

\bibitem{bagheri2013koopman}
{\sc S.~Bagheri}, {\em Koopman-mode decomposition of the cylinder wake}, J.
  Fluid Mech., 726 (2013), pp.~596--623.

\bibitem{bai2017dynamic}
{\sc Z.~Bai, E.~Kaiser, J.~L. Proctor, J.~N. Kutz, and S.~L. Brunton}, {\em
  {D}ynamic {M}ode {D}ecomposition for compressive system identification},
  arXiv preprint arXiv:1710.07737,  (2017).

\bibitem{barocio2015dynamic}
{\sc E.~Barocio, B.~C. Pal, N.~F. Thornhill, and A.~R. Messina}, {\em A
  {D}ynamic {M}ode {D}ecomposition framework for global power system
  oscillation analysis}, IEEE Trans. Power Sys., 30 (2015), pp.~2902--2912.

\bibitem{belouchrani1997blind}
{\sc A.~Belouchrani, K.~Abed-Meraim, J.-F. Cardoso, and E.~Moulines}, {\em A
  {B}lind {S}ource {S}eparation technique using second-order statistics}, IEEE
  Trans. Signal Process., 45 (1997), pp.~434--444.

\bibitem{berger2015estimation}
{\sc E.~Berger, M.~Sastuba, D.~Vogt, B.~Jung, and H.~Ben~Amor}, {\em Estimation
  of perturbations in robotic behavior using {D}ynamic {M}ode {D}ecomposition},
  Adv. Robot., 29 (2015), pp.~331--343.

\bibitem{brakel2017learning}
{\sc P.~Brakel and Y.~Bengio}, {\em Learning independent features with
  adversarial nets for non-linear {ICA}}, arXiv preprint arXiv:1710.05050,
  (2017).

\bibitem{candes2011probabilistic}
{\sc E.~J. Candes and Y.~Plan}, {\em A probabilistic and {RIP}less theory of
  compressed sensing}, Information Theory, IEEE Transactions on, 57 (2011),
  pp.~7235--7254.

\bibitem{cardoso1993blind}
{\sc J.-F. Cardoso and A.~Souloumiac}, {\em Blind beamforming for
  non-{G}aussian signals}, in IEE proceedings F (radar and signal processing),
  vol.~140, IET, 1993, pp.~362--370.

\bibitem{chen2006efficient}
{\sc A.~Chen, P.~J. Bickel, et~al.}, {\em Efficient independent component
  analysis}, The Annals of Statistics, 34 (2006), pp.~2825--2855.

\bibitem{chen2012variants}
{\sc K.~K. Chen, J.~H. Tu, and C.~W. Rowley}, {\em Variants of {D}ynamic {M}ode
  {D}ecomposition: {B}oundary condition, {K}oopman, and {F}ourier analyses}, J.
  Nonlinear Sci., 22 (2012), pp.~887--915.

\bibitem{choi2005blind}
{\sc S.~Choi, A.~Cichocki, H.-M. Park, and S.-Y. Lee}, {\em Blind {S}ource
  {S}eparation and {I}ndependent {C}omponent {A}nalysis: {A} review}, {NIP-LR},
  6 (2005), pp.~1--57.

\bibitem{vcrnjaric2017koopman}
{\sc N.~{\v{C}}rnjari{\'c}-{\v{Z}}ic, S.~Ma{\'c}e{\v{s}}i{\'c}, and
  I.~Mezi{\'c}}, {\em Koopman operator spectrum for random dynamical system},
  arXiv preprint arXiv:1711.03146,  (2017).

\bibitem{davenport2016overview}
{\sc M.~A. Davenport and J.~Romberg}, {\em An overview of low-rank matrix
  recovery from incomplete observations}, IEEE Sel. Top. Signal Proc., 10
  (2016), pp.~608--622.

\bibitem{demmel1997applied}
{\sc J.~W. Demmel}, {\em Applied numerical linear algebra}, vol.~56, SIAM,
  1997.

\bibitem{eckart1936approximation}
{\sc C.~Eckart and G.~Young}, {\em The approximation of one matrix by another
  of lower rank}, Psychometrika, 1 (1936), pp.~211--218.

\bibitem{eriksson2002blind}
{\sc J.~Eriksson and V.~Koivunen}, {\em Blind identifiability of class of
  nonlinear instantaneous {ICA} models}, in Proceedings of the 11th EUSIPCO,
  IEEE, 2002, pp.~1--4.

\bibitem{fisk2005very}
{\sc S.~Fisk}, {\em A very short proof of {C}auchy's interlace theorem for
  eigenvalues of {H}ermitian matrices}, Am. Math. Mon., 112 (2005), p.~118.

\bibitem{grais2014deep}
{\sc E.~M. Grais, M.~U. Sen, and H.~Erdogan}, {\em Deep neural networks for
  single channel source separation}, in Proceedings of the {ICASSP}, IEEE,
  2014, pp.~3734--3738.

\bibitem{hannan1974uniform}
{\sc E.~J. Hannan}, {\em The uniform convergence of autocovariances}, Ann.
  Statist.,  (1974), pp.~803--806.

\bibitem{hanssen1997multidimensional}
{\sc A.~Hanssen}, {\em Multidimensional multitaper spectral estimation}, Signal
  Proc., 58 (1997), pp.~327--332.

\bibitem{hemati2017biasing}
{\sc M.~S. Hemati, C.~W. Rowley, E.~A. Deem, and L.~N. Cattafesta}, {\em
  De-biasing the {D}ynamic {M}ode {D}ecomposition for applied {K}oopman
  spectral analysis of noisy datasets}, Theor. Comput. Fluid Dyn., 31 (2017),
  pp.~349--368.

\bibitem{hong1982autocorrelation}
{\sc A.~Hong-Zhi, C.~Zhao-Guo, and E.~J. Hannan}, {\em Autocorrelation,
  autoregression and autoregressive approximation}, Ann. Statist.,  (1982),
  pp.~926--936.

\bibitem{hyvarinen2001independent}
{\sc A.~Hyvarinen, J.~Karhunen, and E.~Oja}, {\em Independent component
  analysis, a wiley-interscience publication}, 2001.

\bibitem{hyvarinen2016unsupervised}
{\sc A.~Hyvarinen and H.~Morioka}, {\em Unsupervised feature extraction by
  time-contrastive learning and nonlinear {ICA}}, in Advances in Neural
  Information Processing Systems, 2016, pp.~3765--3773.

\bibitem{hyvarinen2018nonlinear}
{\sc A.~Hyvarinen, H.~Sasaki, and R.~E. Turner}, {\em Nonlinear {ICA} using
  auxiliary variables and generalized contrastive learning}, arXiv preprint
  arXiv:1805.08651,  (2018).

\bibitem{hyvarinen2017nonlinear}
{\sc A.~J. Hyvarinen and H.~Morioka}, {\em Nonlinear {ICA} of temporally
  dependent stationary sources}, in Proceedings of Machine Learning Research,
  2017.

\bibitem{weyl1912asymptotische}
{\sc C.~R. Johnson and R.~A. Horn}, {\em Matrix analysis}, Cambridge University
  Press, 1985.

\bibitem{johnstone2009consistency}
{\sc I.~M. Johnstone and A.~Y. Lu}, {\em On consistency and sparsity for
  principal components analysis in high dimensions}, Journal of the American
  Statistical Association, 104 (2009), p.~682.

\bibitem{jonathan2008time}
{\sc D.~C. Jonathan and C.~Kung-Sik}, {\em Time series analysis with
  applications in r}, SpringerLink, Springer eBooks,  (2008).

\bibitem{jovanovic2014sparsity}
{\sc M.~R. Jovanovi{\'c}, P.~J. Schmid, and J.~W. Nichols}, {\em
  Sparsity-promoting {D}ynamic {M}ode {D}ecomposition}, Physics of Fluids, 26
  (2014), p.~024103.

\bibitem{kerschen2005method}
{\sc G.~Kerschen, J.-c. Golinval, A.~F. Vakakis, and L.~A. Bergman}, {\em The
  method of {P}roper {O}rthogonal {D}ecomposition for dynamical
  characterization and order reduction of mechanical systems: {A}n overview},
  Nonlinear dynamics, 41 (2005), pp.~147--169.

\bibitem{kutz2016book}
{\sc J.~Kutz, S.~Brunton, B.~Brunton, and J.~Proctor}, {\em Dynamic {M}ode
  {D}ecomposition}, Society for Industrial and Applied Mathematics,
  Philadelphia, PA, 2016.

\bibitem{latala2005some}
{\sc R.~Lata{\l}a}, {\em Some estimates of norms of random matrices}, Proc. Am.
  Math. Soc., 133 (2005), pp.~1273--1282.

\bibitem{lee1998independent}
{\sc T.-W. Lee}, {\em Independent {C}omponent {A}nalysis}, in Independent
  component analysis, Springer, 1998, pp.~27--66.

\bibitem{lusch2018deep}
{\sc B.~Lusch, J.~N. Kutz, and S.~L. Brunton}, {\em Deep learning for universal
  linear embeddings of nonlinear dynamics}, Nature communications, 9 (2018),
  p.~4950.

\bibitem{mann2016dynamic}
{\sc J.~Mann and J.~N. Kutz}, {\em Dynamic {M}ode {D}ecomposition for financial
  trading strategies}, Quantitative Finance, 16 (2016), pp.~1643--1655.

\bibitem{matilainen2018number}
{\sc M.~Matilainen, K.~Nordhausen, and J.~Virta}, {\em On the number of signals
  in multivariate time series}, in LVA/ICA, Springer, 2018, pp.~248--258.

\bibitem{matsuda2018estimation}
{\sc T.~Matsuda and A.~Hyvarinen}, {\em Estimation of non-normalized mixture
  models and clustering using deep representation}, arXiv preprint
  arXiv:1805.07516,  (2018).

\bibitem{meyer1988}
{\sc C.~D. Meyer and G.~W. Stewart}, {\em Derivatives and perturbations of
  eigenvectors}, SIAM J. Numer. Anal., 25 (1988), pp.~679--691.

\bibitem{mezic2013analysis}
{\sc I.~Mezi{\'c}}, {\em Analysis of fluid flows via spectral properties of the
  {K}oopman operator}, Annu. Rev. Fluid Mech., 45 (2013), pp.~357--378.

\bibitem{miettinen2016separation}
{\sc J.~Miettinen, K.~Illner, K.~Nordhausen, H.~Oja, S.~Taskinen, and F.~J.
  Theis}, {\em Separation of uncorrelated stationary time series using
  autocovariance matrices}, J. Time Series Anal., 37 (2016), pp.~337--354.

\bibitem{JADEPackage}
{\sc J.~Miettinen, K.~Nordhausen, and S.~Taskinen}, {\em Blind {S}ource
  {S}eparation based on joint diagonalization in {R}: {T}he packages {JADE} and
  {BSSasymp}}, J. Stat. Software, 76 (2017), pp.~1--31.

\bibitem{mitsui2017blind}
{\sc Y.~Mitsui, D.~Kitamura, S.~Takamichi, N.~Ono, and H.~Saruwatari}, {\em
  Blind {S}ource {S}eparation based on independent low-rank matrix analysis
  with sparse regularization for time-series activity}, in Proceedings of the
  {ICASSP}, IEEE, 2017, pp.~21--25.

\bibitem{nadakuditi2014optshrink}
{\sc R.~R. Nadakuditi}, {\em Opt{S}hrink: {A}n algorithm for improved low-rank
  signal matrix denoising by optimal, data-driven singular value shrinkage},
  IEEE Trans. Inform. Theory, 60 (2014), pp.~3002--3018.

\bibitem{o2018random}
{\sc S.~O'Rourke, V.~Vu, and K.~Wang}, {\em Random perturbation of low rank
  matrices: {I}mproving classical bounds}, Linear Algebra Appl., 540 (2018),
  pp.~26--59.

\bibitem{parzen1957consistent}
{\sc E.~Parzen}, {\em On consistent estimates of the spectrum of a stationary
  time series}, Ann. Math. Stat.,  (1957), pp.~329--348.

\bibitem{kutz2017dynamic}
{\sc S.~Pendergrass, S.~L. Brunton, J.~N. Kutz, N.~B. Erichson, and T.~Askham},
  {\em Dynamic {M}ode {D}ecomposition for background modeling}, in Proceedings
  of the {ICCVW}, IEEE, 2017, pp.~1862--1870.

\bibitem{prasadan_tSVD_DMDconf}
{\sc A.~Prasadan, A.~Lodhia, and R.~R. Nadakuditi}, {\em Phase transitions in
  the {D}ynamic {M}ode {D}ecomposition algorithm}, in Computational Advances in
  Multi-Sensor Adaptive Processing (CAMSAP), 2019 IEEE Workshop on, IEEE, 2019,
  pp.~1--5.

\bibitem{prasadanDMDconf}
{\sc A.~Prasadan and R.~R. Nadakuditi}, {\em The finite sample performance of
  {D}ynamic {M}ode {D}ecomposition}, in Signal and Information Processing
  (GlobalSIP), 2018 IEEE Global Conference on, IEEE, 2018, pp.~1--5.

\bibitem{ravikumar2010high}
{\sc P.~Ravikumar, M.~J. Wainwright, and e.~a. Lafferty, John~D}, {\em
  High-dimensional {I}sing model selection using $\ell_1$-regularized logistic
  regression}, Annals of Statistics, 38 (2010), pp.~1287--1319.

\bibitem{rowley2009spectral}
{\sc C.~W. Rowley, I.~Mezi{\'c}, S.~Bagheri, P.~Schlatter, and D.~S.
  Henningson}, {\em Spectral analysis of nonlinear flows}, J. Fluid Mech., 641
  (2009), pp.~115--127.

\bibitem{schmid2010dynamic}
{\sc P.~J. Schmid}, {\em {D}ynamic {M}ode {D}ecomposition of numerical and
  experimental data}, J. Fluid Mech., 656 (2010), pp.~5--28.

\bibitem{shapiro2006style}
{\sc A.~Shapiro, Y.~Cao, and P.~Faloutsos}, {\em Style components}, in
  Proceedings of Graphics Interface 2006, Canadian Information Processing
  Society, 2006, pp.~33--39.

\bibitem{ijcai2017-392}
{\sc N.~Takeishi, Y.~Kawahara, Y.~Tabei, and T.~Yairi}, {\em Bayesian dynamic
  mode decomposition}, in Proceedings of the Twenty-Sixth {IJCAI}, 2017,
  pp.~2814--2821.

\bibitem{tang2005recovery}
{\sc A.~C. Tang, J.-Y. Liu, and M.~T. Sutherland}, {\em Recovery of correlated
  neuronal sources from {EEG}: the good and bad ways of using {SOBI}},
  Neuroimage, 28 (2005), pp.~507--519.

\bibitem{tao2012topics}
{\sc T.~Tao}, {\em Topics in random matrix theory}, vol.~132, American
  Mathematical Soc., 2012.

\bibitem{tichavsky2006computationally}
{\sc P.~Tichavsk{\`y}, E.~Doron, A.~Yeredor, and J.~Nielsen}, {\em A
  computationally affordable implementation of an asymptotically optimal {BSS}
  algorithm for {AR} sources}, in 14th EUSIPCO, IEEE, 2006, pp.~1--5.

\bibitem{tong1990amuse}
{\sc L.~Tong, V.~Soon, Y.-F. Huang, and R.~Liu}, {\em {AMUSE}: {A} new blind
  identification algorithm}, in {ISCAS}, IEEE, 1990, pp.~1784--1787.

\bibitem{troje2002decomposing}
{\sc N.~F. Troje}, {\em Decomposing biological motion: {A} framework for
  analysis and synthesis of human gait patterns}, Journal of vision, 2 (2002),
  pp.~2--2.

\bibitem{troje2002little}
{\sc N.~F. Troje}, {\em The little difference: {F}ourier based gender
  classification from biological motion}, Dynamic perception,  (2002),
  pp.~115--120.

\bibitem{tu2014dynamic}
{\sc J.~H. Tu, C.~W. Rowley, D.~M. Luchtenburg, S.~L. Brunton, and J.~N. Kutz},
  {\em On {D}ynamic {M}ode {D}ecomposition: {T}heory and applications}, J.
  Comput. Dyn., 1 (2014), pp.~391--421.

\bibitem{unuma1995fourier}
{\sc M.~Unuma, K.~Anjyo, and R.~Takeuchi}, {\em Fourier principles for
  emotion-based human figure animation}, in Proceedings of the 22nd {PACMCGIT},
  Citeseer, 1995, pp.~91--96.

\bibitem{wilkinson1965algebraic}
{\sc J.~H. Wilkinson}, {\em The algebraic eigenvalue problem}, vol.~87,
  Clarendon Press Oxford, 1965.

\bibitem{williams2015data}
{\sc M.~O. Williams, I.~G. Kevrekidis, and C.~W. Rowley}, {\em A data--driven
  approximation of the {K}oopman operator: {E}xtending {D}ynamic {M}ode
  {D}ecomposition}, J. Nonlinear Sci., 25 (2015), pp.~1307--1346.

\bibitem{yang2019learning}
{\sc B.~Yang, X.~Fu, N.~D. Sidiropoulos, and K.~Huang}, {\em Learning nonlinear
  mixtures: Identifiability and algorithm}, arXiv preprint arXiv:1901.01568,
  (2019).

\bibitem{zhang2017online}
{\sc H.~Zhang, C.~Rowley, E.~Deem, and L.~Cattafesta}, {\em Online {D}ynamic
  {M}ode {D}ecomposition for time-varying systems}, Bulletin Am. Phys. Soc., 62
  (2017).

\end{thebibliography}

\end{document}